\author{Andy Hammerlindl and Ra\'ul Ures}
\title{Ergodicity and partial hyperbolicity on the 3-torus}
\newcommand{\R}{\mathbb{R}}
\newcommand{\Z}{\mathbb{Z}}
\newcommand{\T}{\mathbb{T}}
\newcommand{\bbR}{\mathbb{R}}
\newcommand{\bbZ}{\mathbb{Z}}
\newcommand{\bbT}{\mathbb{T}}
\newcommand{\Td}{\mathbb{T}^d}
\newcommand{\Rd}{\mathbb{R}^d}
\newcommand{\Zd}{\mathbb{Z}^d}
\newcommand{\F}{\mathcal{F}}
\newcommand{\Es}{E^s}
\newcommand{\Ec}{E^c}
\newcommand{\Eu}{E^u}
\newcommand{\Ws}{W^s}
\newcommand{\Wc}{W^c}
\newcommand{\Wu}{W^u}
\newcommand{\Wcu}{W^{cu}}
\newcommand{\Wcs}{W^{cs}}
\newcommand{\Wus}{W^{us}}
\newcommand{\inv}{^{-1}}
\newcommand{\Lf}{\mathcal{L}}
\newcommand{\Diff}{\operatorname{Diff}}
\newcommand{\Per}{\operatorname{Per}}
\newcommand{\length}{\operatorname{length}}
\newcommand{\id}{\operatorname{id}}
\newcommand{\supp}{\operatorname{supp}}
\newcommand{\gammahat}{\hat \gamma}
\newcommand{\normalize}[1]{\frac{#1}{\|#1\|}}
\newcommand{\catmap}{%
\ensuremath{(\begin{smallmatrix} 2&1 \\ 1&1 \end{smallmatrix})}%
}
\newtheorem{thm}{Theorem}[section]
\newtheorem{cor}[thm]{Corollary}
\newtheorem{lemma}[thm]{Lemma}
\newtheorem{prop}[thm]{Proposition}
\newtheorem{question}[thm] {\bf Question}
\newtheorem{conjecture}[thm] {\bf Conjecture}
\theoremstyle{remark}
\newtheorem*{remark} {\bf Remark}
\newtheorem*{notation} {\bf Notation}
\providecommand{\acknowledgement}{{\noindent\bf Acknowledgements}\quad}
\begin{document}

\maketitle


\begin{abstract}
    If a non-ergodic, partially hyperbolic diffeomorphism on the 3-torus is
    homotopic to an Anosov diffeomorphism $A$, it is topologically conjugate
    to $A$.
\end{abstract}
\section{Introduction} 
Ergodicity is commonplace among partially hyperbolic diffeomorphisms, and so
an interesting question is when such systems fail to be ergodic.  We study
this question in the specific case of the 3-torus, proving the following.

\begin{thm} \label{mainthm}
    Suppose $f:\T^3 \to \T^3$ is a conservative partially hyperbolic $C^2$
    diffeomorphism homotopic to a linear Anosov map $A:\T^3 \to \T^3$.   If
    $f$ is not ergodic, it is topologically conjugate to $A$.
\end{thm}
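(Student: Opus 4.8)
The plan is to show that the Franks semiconjugacy from $f$ to $A$ is a homeomorphism. By a theorem of Franks there is a continuous surjection $h\colon\T^3\to\T^3$, homotopic to the identity, with $h\circ f=A\circ h$ and with a lift $\tilde h\colon\R^3\to\R^3$ at bounded distance from the identity; since such an $h$ is a homeomorphism as soon as it is injective, the whole problem reduces to proving $h$ injective. The first step is soft: because $f$ is a conservative $C^2$ partially hyperbolic diffeomorphism with one--dimensional center, the Burns--Wilkinson criterion shows that an accessible such $f$ is ergodic, so the hypothesis gives that $f$ is \emph{not accessible}.

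Next I would invoke the structure of accessibility classes in the $1$-dimensional center setting: by Rodriguez Hertz--Rodriguez Hertz--Ures, the union $\Gamma$ of the non-open accessibility classes is a nonempty, closed, $f$-invariant, $su$-saturated lamination with $C^1$ leaves tangent to $E^s\oplus E^u$. The crucial geometric point is to upgrade ``$\Gamma\ne\emptyset$'' to ``$\Gamma=\T^3$''. Here one uses that $f$ is homotopic to $A$ on $\T^3$: the strong stable and strong unstable foliations of $f$ lift to foliations of $\R^3$ that stay at bounded Hausdorff distance from the corresponding linear foliations of $A$ (Brin--Burago--Ivanov, Hammerlindl), so every leaf of $\Gamma$ lifts to a properly embedded plane at bounded distance from an affine translate of $P_0:=E^s_A\oplus E^u_A$. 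Since $\tilde h$ is bounded and $h$ carries $su$-classes of $f$ into $su$-classes of $A$ --- which are precisely the affine translates of $P_0$ --- this plane maps into one such translate $P_0+w$, and its ``height'' $\pi(w)\in\R^3/P_0\cong\R$ is thereby pinned down. (Checking that $h$ maps a strong leaf of $f$ \emph{onto} a strong, not merely center-stable, leaf of $A$ uses that these leaves remain at bounded distance from the linear ones under \emph{all} iterates.) The set of heights of the leaves of $\widetilde\Gamma$ is then closed and invariant under translation by $\pi(\Z^3)$; since the eigendirections of $A$ are irrational, $P_0$ contains no nonzero lattice vector, $\pi(\Z^3)$ is dense in $\R$, and the set of heights must be empty or all of $\R$. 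As $\Gamma\ne\emptyset$ it is all of $\R$, so $\Gamma=\T^3$: $E^s\oplus E^u$ is uniquely integrable, $\Wus$ is an $f$-invariant foliation with one plane-like leaf at each height, and the leaf space of its lift is $\R$. Finally, $A$ acts on $\R^3/P_0\cong\R$ as multiplication by the center eigenvalue $\lambda_c$ with $|\lambda_c|\ne1$, and the leaf-space map of $h$ is at bounded distance from the identity and commutes with this multiplication; an elementary iteration argument then forces it to be the identity. In particular, $h$ maps the leaf of $\Wus$ at height $t$ into the affine plane at height $t$, and these planes are pairwise disjoint, so $h(x)=h(y)$ implies $x$ and $y$ lie on the same leaf of $\Wus$.

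It remains to show $h$ is injective on each leaf $L$ of $\Wus$. Two preliminary facts: $h$ is injective on every strong stable leaf and on every strong unstable leaf --- two distinct points of such a leaf with the same image would have lifts at bounded distance under all iterates of $\tilde f$, contradicting that these leaves are quasi-isometrically embedded in $\R^3$ and uniformly expanded by $\tilde f^{\mp1}$; and inside the plane $L$ the foliations $\Ws|_L$ and $\Wu|_L$ have a global product structure, since each is at bounded distance from one of the two linear rulings of $P_0$. Now suppose $h(x)=h(y)$ with $x\ne y$ in $L$; by the first fact $x,y$ lie on distinct strong stable leaves and on distinct strong unstable leaves. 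Passing to the universal cover --- where, using once more that $P_0$ is irrational, the relevant lifts satisfy $\tilde h(\tilde x)=\tilde h(\tilde y)$ --- let $\tilde w$ be the point of $\Wu(\tilde x)\cap\Ws(\tilde y)$. Then $\tilde h(\tilde w)$ lies on the linear strong unstable leaf of $A$ through $\tilde h(\tilde x)$ and on the linear strong stable leaf through $\tilde h(\tilde y)=\tilde h(\tilde x)$, so $\tilde h(\tilde w)=\tilde h(\tilde x)$; since $\tilde w,\tilde y$ lie on a common strong stable leaf, on which $\tilde h$ is injective, $\tilde w=\tilde y$, whence $\tilde y\in\Wu(\tilde x)$ --- contradicting that $x,y$ are on distinct strong unstable leaves. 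Hence $h|_L$ is injective, so $h$ is injective, hence a homeomorphism, and $f$ is topologically conjugate to $A$.

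The step I expect to be the main obstacle is the middle one: proving $\Gamma=\T^3$ and that the center-transverse geometry of the resulting $su$-foliation is essentially linear. This is where the global topology of $\T^3$ and the hypothesis $f\simeq A$ are used to the hilt, via the bounded-distance theory for the strong foliations on $\T^3$, its interaction with the Franks semiconjugacy, and the density furnished by the irrationality of the eigendirections of $A$. The Burns--Wilkinson reduction and the closing product-structure argument are, by contrast, comparatively routine once this rigid picture is in hand.
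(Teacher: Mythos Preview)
Your route differs substantially from the paper's. The paper does not touch the Franks semiconjugacy until the very end (and in dimension three not at all): after reducing to non-accessibility, it uses the Hertz--Hertz--Ures trichotomy together with a $\pi_1$ argument to rule out compact and Anosov-dynamics $us$-leaves, obtaining a Reebless foliation $W^{us}$; it then applies Plante's theory to produce a holonomy-invariant transverse measure $\mu$, shows $W^{us}$ is minimal (hence $\mu$ has full support and is unique up to scale), and identifies $[\mu]\in H^1(\T^3,\R)$ as an eigenvector of the hyperbolic map $f^*$, so $f^*\mu=\lambda\mu$ with $\lambda\neq 1$. From this scaling it deduces topological contraction along the center, then expansiveness, and finally invokes Vieitez's theorem that an expansive diffeomorphism of a closed orientable $3$-manifold with $NW(f)=M$ is conjugate to a linear Anosov map.

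Your middle step --- the one you flag --- has a genuine gap. The density of $\pi(\Z^3)$ in $\R$ only tells you the \emph{closure} of the height set is $\R$; you assert the height set is closed, but the height function $p\mapsto\pi(\tilde h(p))$ is not proper on $\widetilde\Gamma$ (it is constant on two-dimensional leaves), so closedness needs an argument. More seriously, even granting that every height is realized, this does not force $\Gamma=\T^3$: two distinct leaves of $\widetilde\Gamma$ can sit at bounded distance from the \emph{same} translate of $P_0$ and hence share a height, while an open gap of $\R^3\setminus\widetilde\Gamma$ lies between them. Relatedly, your ``elementary iteration argument'' for the leaf-space map claims $\bar h$ commutes with multiplication by $\lambda_c$, but the relation you actually have is $\bar h\circ\bar f=\lambda_c\,\bar h$, where $\bar f$ is the (a priori nonlinear) action of $\tilde f$ on the leaf space of $\widetilde W^{us}_f$; forcing $\bar h$ to be injective from this needs more than you indicate. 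The paper sidesteps all of this: it proves $\Gamma=\T^3$ (indeed, minimality of $W^{us}$) by showing that any boundary leaf of a proper minimal set would carry Anosov dynamics with dense periodic points, and then that such a leaf is $\pi_1$-injective, hence (since $f_*$ is hyperbolic on $\Z^3$) a plane --- but a plane Anosov map with a dense unstable leaf would admit a Jordan-curve trapping region, a contradiction.

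One further caution: the bounded-distance control of $W^s_f,W^u_f$ from the linear foliations that you invoke (Brin--Burago--Ivanov, Hammerlindl) is, in the form you need, proved for \emph{absolutely} partially hyperbolic systems, whereas the theorem is stated point-wise; the paper's three-dimensional proof avoids this input entirely.
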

This research is motivated by the study of stable ergodicity.
In the space of conservative $C^2$ diffeomorphisms, an element $f$ is
\emph{stably ergodic} if every nearby diffeomorphism is also ergodic.
All Anosov diffeomorphisms are ergodic and they form an open subset of the
space of diffeomorphisms.  Therefore, every such system is an example
of a stably ergodic diffeomorphism.  In fact, for decades, these were the only
known examples.

Then, Grayson, Pugh, and Shub studied the time-one map of the geodesic flow
on a surface of constant negative curvature, showing it was also stably
ergodic \cite{grayson1994stably}.
While this diffeomorphism has some hyperbolic behaviour, it acts as an isometry
along the orbits of the flow.  In particular, it is an example of a
\emph{partially hyperbolic} system, a diffeomorphism $f:M \to M$ with an
invariant splitting of the tangent bundle $TM = \Eu \oplus \Ec \oplus \Es$, such
that vectors in $\Eu$ are exponentially expanded under iteration, those in $\Es$
are exponentially contracted, and where any expansion or contraction of the
\emph{center} bundle $\Ec$ is weak in comparison.  (The next section gives a
precise definition.)

Further research yielded a wealth of partially hyperbolic examples of stable
ergodicity, leading to the following conjecture of Pugh and Shub
\cite{pugh2000stable} \cite{pughshub_montevideo}.

\begin{conjecture} \label{pughshub1}
    Stable ergodicity is open and dense in the space of conservative, partially
    hyperbolic $C^2$ diffeomorphisms.
\end{conjecture}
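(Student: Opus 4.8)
The plan follows the Pugh--Shub program, which splits the conjecture into a dynamical reduction and a perturbation problem. The openness half is essentially formal: by definition the stably ergodic diffeomorphisms are exactly the interior of the ergodic locus inside the space of conservative partially hyperbolic $C^2$ maps, and an interior is always open, so nothing beyond unwinding the definition is needed there. Hence all the substance lies in \emph{density}: given such an $f$ and a $C^2$-neighborhood of it, I must exhibit a stably ergodic diffeomorphism nearby. The key is to replace the measurable notion of ergodicity by the more robust geometric notion of \emph{accessibility}: $f$ is accessible if any two points of $M$ can be joined by an $su$-path, a finite concatenation of $C^1$ arcs each lying inside a single leaf of $\Ws$ or $\Wu$. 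The program asserts, and I would establish, two statements: (A) accessibility, plus mild regularity, implies stable ergodicity, and (B) accessibility is dense. Together these give density of stable ergodicity.

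For (A) I would run a Hopf-type argument. For a continuous observable the forward and backward Birkhoff averages $\varphi^+$ and $\varphi^-$ exist almost everywhere, are constant along leaves of $\Wu$ and of $\Ws$ respectively, and coincide almost everywhere, so an $su$-path carries the level sets of $\varphi^+$ to themselves; accessibility would then force $\varphi^+$ to be constant and $f$ to be ergodic --- \emph{if} these averages were continuous. They are only measurable, and the leaves are null sets, so the conditioning is delicate. Following Burns--Wilkinson (extending the julienne density-point technology of Pugh--Shub), I would show that a Lebesgue density point of an essentially $su$-saturated set is also a density point with respect to thin \emph{julienne} neighborhoods, that stable and unstable holonomies carry juliennes to comparable juliennes, and hence that an essentially $su$-invariant set agrees, up to measure zero, with an open set. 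This upgrades essential accessibility to full measure and yields ergodicity; since accessibility will be made robust in (B), it in fact yields \emph{stable} ergodicity. These estimates close precisely under a \emph{center-bunching} condition, namely that the weak expansion and contraction on $\Ec$ are dominated by suitable products of the rates on $\Eu$ and $\Es$.

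For (B) I would open up the accessibility classes by perturbation. The accessibility class of a point $p$ is the set of points reachable from $p$ by $su$-paths, and $f$ is accessible exactly when some (equivalently every) class is all of $M$. A class that fails to be open is forced to be a lamination with rigid holonomy, so the strategy is to show that an arbitrarily $C^r$-small perturbation --- pushing points along $\Wu$ and then $\Ws$ so as to realize a nonzero $su$-holonomy loop --- destroys this rigidity and enlarges the class until it is open, hence, by connectedness and invariance, all of $M$; robustness of the construction yields \emph{stable} accessibility. In the $C^1$ topology this was achieved in full generality by Dolgopyat--Wilkinson, and for one-dimensional center, where the relevant holonomies are well understood, by Rodriguez Hertz--Rodriguez Hertz--Ures in the $C^r$ topology.

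The main obstacle is the gap between what this scheme delivers and the full conjecture, concentrated entirely in the higher-dimensional center case. When $\dim \Ec = 1$ --- the setting of Theorem \ref{mainthm} and of systems on the $3$-torus --- center bunching is automatic and $C^r$-density of accessibility is available, so the two steps combine to settle the conjecture. When $\dim \Ec \geq 2$, two difficulties appear simultaneously: density of accessibility in the $C^2$ and higher topologies is not known, because the higher-codimension perturbation analysis lacks the clean rigidity dichotomy that drives the one-dimensional argument; and, more seriously, there exist conservative partially hyperbolic systems that are \emph{not} center bunched, for which the julienne estimates of step (A) simply fail to close. Removing center bunching --- either through a genuinely new Hopf argument valid without it, or by showing that every conservative partially hyperbolic map can be $C^2$-perturbed into one that is both center bunched and accessible --- is where I expect the decisive difficulty to lie, and it is precisely the reason the conjecture remains open in full generality.
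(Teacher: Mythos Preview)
The statement you were asked to prove is labeled as a \emph{conjecture} in the paper, and the paper provides no proof of it. The surrounding text explicitly says the conjecture has been established only in special cases and that the strongest general result (Avila--Crovisier--Wilkinson) gives density only in the $C^1$ topology, not the $C^2$ topology the conjecture asks for. So there is no ``paper's own proof'' to compare against.

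Your write-up is not a proof either, and you seem to know this: the final paragraph correctly isolates the two genuine obstructions --- $C^r$-density of accessibility for $\dim \Ec \ge 2$, and the center-bunching hypothesis in the Burns--Wilkinson argument --- and concludes that ``the conjecture remains open in full generality.'' What you have written is an accurate survey of the Pugh--Shub program (Conjectures~\ref{pughshub2} and~\ref{pughshub3} in the paper) together with the state of the art on each half, but it does not close the gap, nor does it claim to. If the assignment was to produce a proof, the honest answer is that none is currently known; if it was to explain the strategy and the obstacles, your account is essentially correct and matches the paper's own discussion of the conjecture.
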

This conjecture has been proved in a number of special cases, including
\cite{pugh2000stable}
\cite{RHRHTU-duke}
\cite{RHRHU-accessibility}
\cite{BW-annals}
\cite{bdp}
\cite{avila2008nonuniform}.
Recently, Avila, Crovisier, and Wilkinson announced a proof of a weaker
version of the conjecture where they show density, but only in the $C^1$
topology.
A key ingredient in the proofs of each of these results is a property called
accessibility.
A partially hyperbolic diffeomorphism is \emph{accessible} if any two points
$x,y \in M$ can be connected by a concatenation of paths, each path tangent
either to $\Eu$ or $\Es$.

Pugh and Shub split Conjecture \ref{pughshub1} into two subconjectures.

\begin{conjecture} \label{pughshub2}
    Accessibility holds on an open and dense subset of the space of partially
    hyperbolic diffeomorphisms (conservative or not).
\end{conjecture}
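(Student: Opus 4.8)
The plan is to establish the two halves of the statement---openness and density---largely separately, with the density half carrying the bulk of the work. Throughout, the central object is the \emph{accessibility class} $AC(x)$ of a point $x$, namely the set of all points reachable from $x$ by a $us$-path (a concatenation of arcs tangent to $\Eu$ or $\Es$). A first observation is that the accessibility classes partition $M$, that each is path-connected and saturated by both the strong stable foliation $\Ws$ and the strong unstable foliation $\Wu$, and that there is a structural dichotomy: either $AC(x)$ is open, or it has empty interior. If every class is open then, since the classes partition the connected manifold $M$, each class is also closed, so there is a single class and $f$ is accessible. Thus a diffeomorphism fails to be accessible precisely when some class has empty interior, and the whole problem reduces to removing such classes by a small perturbation while ensuring the resulting accessibility is robust.

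For density I would first treat the case of one-dimensional center bundle $\Ec$, where the geometry is cleanest: there one shows that the non-open accessibility classes, if present, assemble into a lamination by codimension-one leaves tangent to $\Es \oplus \Eu$. The strategy is then to destroy this lamination by a $C^r$-small perturbation. Fix a point $p$ in a non-open class and a small ball $B$ around a point of its orbit, and perturb $f$ inside $B$ so as to rotate the subbundles $\Es$ and $\Eu$ slightly relative to one another. Because the $su$-holonomy is governed by the relative position of these subbundles, an arbitrarily small tilt produces a nontrivial holonomy around a short $us$-loop based at $p$; this pushes $AC(p)$ transversally off the former lamination, so that $AC(p)$ acquires interior and, by the dichotomy, becomes open. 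Arguing that a single well-chosen such perturbation opens a class which then engulfs its neighbors yields a diffeomorphism, arbitrarily $C^r$-close to $f$, that is accessible.

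For openness the aim is to upgrade the accessibility produced above to \emph{stable} accessibility, i.e.\ accessibility that survives further perturbation. The mechanism is that the holonomy built in the density step crosses the old lamination transversally and with a definite, uniform angle; transversality is an open condition, and the foliations $\Ws$, $\Wu$ vary continuously (in the $C^0$ sense) as $f$ varies in $C^1$. Hence the finite $us$-path realizing the nontrivial holonomy persists, with endpoints depending continuously on the diffeomorphism, for all nearby $g$. Combining this with a compactness argument---covering $M$ by finitely many such robustly connected regions---would show that accessibility holds on a full $C^r$-neighborhood, giving openness of the set so constructed.

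The hardest part, and where this plan genuinely strains, is density when $\dim \Ec \geq 2$. There the non-open accessibility classes need not form a lamination, their geometry can be intricate, and the clean ``tilt the holonomy'' perturbation no longer has a single transverse direction to exploit. I expect this step to require either a substantially more delicate multi-parameter perturbation controlling the full holonomy behaviour of each class at once, or an entirely different route (for instance, first obtaining $C^1$-density in the spirit of Dolgopyat--Wilkinson and then arguing that the $C^1$-open accessible set is $C^r$-dense). Establishing genuine openness---rather than merely stable accessibility on a dense set---in full $C^r$ generality is likewise subtle, since the global merging of classes is not obviously an open condition, and reconciling these two demands is the true crux of the conjecture.
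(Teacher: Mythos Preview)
The statement you are attempting to prove is Conjecture~\ref{pughshub2}, which the paper presents as an \emph{open conjecture} of Pugh and Shub, not as a theorem. The paper offers no proof; it simply records the conjecture, notes that it has been established in various special cases (in particular, by Hertz--Hertz--Ures when $\dim\Ec=1$, cited as Theorem~1.5), and moves on. There is therefore nothing in the paper to compare your proposal against.

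Your proposal is, in effect, a survey of the known partial approaches. The one-dimensional-center argument you sketch is essentially the strategy of \cite{RHRHU-accessibility}, and you correctly identify that the higher-dimensional-center case is where everything breaks down: the lamination picture collapses, single perturbations no longer suffice, and even the openness half is delicate. You also note that Dolgopyat--Wilkinson gives $C^1$-density but not $C^r$-density. These are precisely the obstructions that keep the full conjecture open; your write-up is an honest assessment of the state of the art rather than a proof, and you seem aware of this. So there is no error to correct, but also no proof here---just a well-organized outline of why the conjecture is hard.
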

\begin{conjecture} \label{pughshub3}
    Every accessible conservative partially hyperbolic diffeomorphism is
    ergodic.
\end{conjecture}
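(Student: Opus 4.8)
The plan is to run a version of the Hopf argument adapted to the partially hyperbolic setting. Fix a continuous observable $\phi: M \to \R$ and form the forward and backward Birkhoff averages $\phi^+(x) = \lim_{n \to \infty} \frac1n \sum_{k=0}^{n-1} \phi(f^k x)$ and $\phi^-(x) = \lim_{n \to \infty} \frac1n \sum_{k=0}^{n-1} \phi(f^{-k} x)$. By the Birkhoff ergodic theorem both limits exist on a set of full measure and agree almost everywhere. To prove ergodicity it suffices to show that $\phi^+$ is almost everywhere constant for every continuous $\phi$, since continuous functions are dense in $L^2$. The two classical invariance properties are immediate: $\phi^+$ is constant along each strong stable leaf $\Ws(x)$, because points sharing a stable leaf have the same forward asymptotics, and $\phi^-$ is constant along each strong unstable leaf $\Wu(x)$.

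The role of accessibility is to let us move between arbitrary points of $M$ using only the two strong foliations. If $x$ and $y$ are joined by an $su$-path $x = z_0, z_1, \dots, z_N = y$ with $z_{i+1} \in \Ws(z_i) \cup \Wu(z_i)$, then along each leg the relevant average is constant. The difficulty is that constancy of $\phi^+$ holds only along stable legs, constancy of $\phi^-$ only along unstable legs, and each holds only on a full-measure set, not everywhere; so the two pieces of information must be reconciled. The standard device is absolute continuity: the strong stable and strong unstable foliations of a conservative $C^2$ partially hyperbolic diffeomorphism are absolutely continuous, hence their holonomies carry full-measure sets to full-measure sets. I would use this to transport the coincidence $\phi^+ = \phi^-$ across each leg of the path.

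To make this transport quantitatively correct I would invoke the julienne machinery of Pugh and Shub, refined by Burns and Wilkinson. The idea is to replace ordinary metric balls, which behave badly under strong holonomy, by a nested family of dynamically defined sets — juliennes — built by flowing small center balls forward and backward and intersecting with fast stable and unstable pieces. One proves that a point is a Lebesgue density point of a measurable set if and only if it is a julienne density point, and that the stable and unstable holonomies preserve julienne density points. Together with accessibility this propagates the full-measure coincidence $\phi^+ = \phi^-$ along every $su$-path, forcing $\phi^+$ to be essentially constant on each accessibility class; since the system is accessible there is a single class, so $\phi^+$ is almost everywhere constant and $f$ is ergodic.

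The main obstacle is exactly the comparison between balls and juliennes, and here a gap opens between what this scheme delivers and the full strength of the statement. The crucial estimate — that the holonomies are julienne quasi-conformal, distorting julienne density in a controlled way — requires a \emph{center bunching} hypothesis relating the weak expansion and contraction rates along $\Ec$ to the H\"older regularity of the strong holonomies. Under center bunching the argument closes and in fact yields the Kolmogorov property, which is the theorem of Burns and Wilkinson. Without it the holonomies may fail to be regular enough for the density comparison, and I do not see how to force the julienne argument through. Removing the center bunching assumption, so as to obtain the conjecture in full generality, is the genuinely hard part and remains open.
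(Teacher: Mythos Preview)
The paper does not prove this statement: it is stated as Conjecture~\ref{pughshub3}, one of the Pugh--Shub subconjectures, and is explicitly left open in full generality. The paper only quotes the partial result of Hertz--Hertz--Ures and Burns--Wilkinson (Theorem~\ref{accerg}) that accessibility implies ergodicity when the center is one-dimensional, and uses that special case as an input to the main theorem. There is therefore no ``paper's own proof'' to compare your proposal against.

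Your proposal is an accurate summary of the state of the art rather than a proof of the conjecture. You correctly outline the Hopf--julienne scheme and correctly identify that the Burns--Wilkinson argument closes only under a center bunching hypothesis; your final paragraph concedes that removing this hypothesis is the hard open part. That concession is exactly right, and it means what you have written is not a proof of Conjecture~\ref{pughshub3} but a sketch of the known theorem plus an honest acknowledgment of the gap. Since the paper itself treats the statement as open, there is no discrepancy to flag --- but you should be clear that you have not proved the conjecture, only recovered the bunched case.
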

As with \ref{pughshub1}, these conjectures have been proven in many special
cases.  See \cite{wilkinson2010conservative} for a recent survey.
In particular, the conjectures are true when the center bundle $\Ec$ is
one-dimensional.

\begin{thm}
    [Hertz-Hertz-Ures \cite{RHRHU-accessibility}]
    Accessibility is open and dense among conservative partially
    hyperbolic diffeomorphisms with one-dimensional center.
\end{thm}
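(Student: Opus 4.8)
The plan is to organize the whole argument around \emph{accessibility classes}: for $x\in M$ let $AC(x)$ be the set of points joined to $x$ by a finite concatenation of arcs, each tangent to $\Eu$ or to $\Es$. These sets partition $M$, and since $M$ is connected, $f$ is accessible if and only if no accessibility class has empty interior. Thus the entire problem reduces to showing that the closed $f$-invariant set $\Gamma(f)=\overline{\bigcup\{\,AC(x):\operatorname{int}AC(x)=\emptyset\,\}}$ is empty for $f$ in an open and dense set. The first step, and the only place where $\dim\Ec=1$ is essentially used, is a \emph{dichotomy}: every accessibility class is either open or is an injectively immersed $C^1$ submanifold everywhere tangent to $\Es\oplus\Eu$, hence of codimension one. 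Granting this, $\Gamma(f)$ is a compact $f$-invariant set laminated by these $C^1$ accessibility leaves. I would prove the dichotomy by the classical bracket/commutator argument: if along every $su$-loop based at $x$ the composed stable and unstable holonomies fix $AC(x)$ with no transverse motion in the one-dimensional center direction, then $AC(x)$ is a $C^1$ leaf; otherwise such a loop lets one sweep out a neighborhood of $x$. The subtlety is that one must run this with only the regularity that stable and unstable holonomies supply, and this is exactly where the hypothesis on $\Ec$ enters, since "transverse" is then a single real parameter.

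For \textbf{openness}, the point is that $f\mapsto\Gamma(f)$ is upper semicontinuous: if $f_n\to f$, $x_n\in\Gamma(f_n)$, and $x_n\to x$, then $x\in\Gamma(f)$, which follows from continuity of the strong foliations together with uniform estimates on a fixed plaque size for the $C^1$ accessibility leaves. Hence $\{f:\Gamma(f)=\emptyset\}$ is $\Diff^r$-open, which is openness of accessibility.

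For \textbf{density}, start with $f$ such that $\Gamma(f)\neq\emptyset$, pick a minimal subset $\Lambda\subseteq\Gamma(f)$ of the lamination, and perturb. If $\Lambda$ is a single compact periodic accessibility class, take a periodic point $p$ on it and make an arbitrarily $C^r$-small conservative perturbation inside a small flow box around one point of its orbit, tilting $\Ws$ relative to $\Wu$ so that some $su$-loop through $p$ acquires nonzero center holonomy; this opens $AC(p)$, and with care it opens every class meeting a neighborhood of the orbit, so $\Gamma$ strictly shrinks. If $\Lambda$ has no periodic point, I would first exploit conservativity via an ergodic-closing-type argument to perturb so that $\Gamma$ acquires a periodic point inside $\Lambda$, reducing to the previous case. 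Iterating finitely often, using the upper semicontinuity above to keep $\Gamma$ from re-forming elsewhere, produces $g$ arbitrarily close to $f$ with $\Gamma(g)=\emptyset$.

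The \textbf{main obstacle} is this density step, for two intertwined reasons. One is local: engineering the perturbation so that it genuinely creates nonzero center holonomy while staying small, volume-preserving, and harmless to partial hyperbolicity. The more serious one is global control — opening a single accessibility class need not open any other, as the linear skew products over Anosov automorphisms (where \emph{every} class is compact and non-open) show — so one must use the lamination structure of $\Gamma(f)$, the dynamics on its minimal sets, and the semicontinuity estimate to be sure a finite chain of local perturbations exhausts $\Gamma(f)$ rather than merely relocating it. Alongside this, the $C^1$-regularity dichotomy for non-open classes is the other genuinely hard input, and it is the point at which one-dimensionality of $\Ec$ cannot be dispensed with.
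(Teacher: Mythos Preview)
The paper does not prove this theorem. It is quoted in the introduction as a result of Rodriguez Hertz--Rodriguez Hertz--Ures \cite{RHRHU-accessibility}, with no argument supplied; the present paper only \emph{uses} it (together with Theorem~\ref{accerg}) to reduce ``non-ergodic'' to ``non-accessible'' before embarking on the proof of Theorem~\ref{mainthm}. So there is nothing here to compare your proposal against.

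That said, as a sketch of the argument in the cited reference your outline is in the right neighborhood: the dichotomy that a non-open accessibility class is a codimension-one $C^1$ leaf tangent to $\Es\oplus\Eu$, and the resulting compact $f$-invariant lamination $\Gamma(f)$, are indeed the organizing objects, and one-dimensionality of $\Ec$ is used exactly where you say. Openness was in fact established earlier by Didier via persistence of a single ``four-legged'' $su$-path with nonzero center displacement, which is more direct than the semicontinuity argument you propose but leads to the same conclusion. The genuine gap in your sketch is the density step: the phrase ``iterating finitely often'' hides the whole difficulty. Opening one class near a periodic orbit gives no a priori control on what happens to the rest of $\Gamma(f)$, and upper semicontinuity alone does not furnish a termination bound for such an iteration. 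The actual proof in \cite{RHRHU-accessibility} does not proceed by successive local repairs; it uses a single carefully designed family of perturbations together with an analysis (the ``keepaway'' mechanism) of how center holonomy along $su$-loops varies, to show that a generic perturbation destroys \emph{all} non-open classes simultaneously. Your proposal correctly identifies this as the main obstacle but does not supply the idea that overcomes it.
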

\begin{thm}
    [Hertz-Hertz-Ures \cite{RHRHU-accessibility},
    Burns-Wilkinson \cite{BW-annals}] \label{accerg}
    Every conservative, accessible, partially hyperbolic $C^2$ diffeomorphism
    with one-dimensional center is ergodic.
\end{thm}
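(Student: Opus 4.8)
The plan is to run the classical Hopf argument, upgraded with the density-point technology of Pugh--Shub and Burns--Wilkinson. Fix a continuous $\varphi\colon M\to\R$ and let $\varphi^+,\varphi^-$ be its forward and backward Birkhoff averages, defined on a full-measure set since $f$ preserves a probability measure. Both are $f$-invariant; they agree almost everywhere by the Birkhoff theorem together with conservativity; $\varphi^+$ is constant along each strong unstable leaf, because two points on a common leaf of $\Wu$ have forward orbits that converge and hence equal forward averages wherever both exist; and symmetrically $\varphi^-$ is constant along strong stable leaves. Thus there is a full-measure set $P$ on which the common value $\bar\varphi:=\varphi^+=\varphi^-$ is defined and is simultaneously constant along the intersections of $P$ with leaves of $\Wu$ and with leaves of $\Ws$. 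If $P$ met every $su$-path densely, accessibility would immediately force $\bar\varphi$ to be constant and $f$ to be ergodic; the obstruction is that a full-measure set can still miss every single leaf.

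To get around this I would replace ordinary Lebesgue density by \emph{julienne density}. The strong foliations $\Ws$ and $\Wu$ are absolutely continuous because $f$ is $C^2$, and because $\Ec$ is one-dimensional the map $Df|_{\Ec}$ is conformal, so the center-bunching inequality needed below holds automatically. Following Pugh--Shub --- and, since dynamical coherence is not assumed, using Burns--Wilkinson's locally defined \emph{fake} center-stable and center-unstable foliations in place of genuine invariant ones --- attach to each point a nested family of dynamically defined neighbourhoods, the juliennes, obtained by pulling small center balls forward under $f$ and thickening them by stable and unstable plaques at matching scales. The two facts to establish are: (i) every measurable set has full Lebesgue density at julienne-almost all of its points; and (ii) stable and unstable holonomy maps carry julienne density points of a set essentially saturated by the complementary foliation to julienne density points of the image. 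Part (ii) is the technical heart: it reduces to showing that these holonomies distort juliennes in a controlled, quasi-conformal way, and this is exactly where the $C^2$ hypothesis and the (here automatic) center bunching enter.

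Granting (i) and (ii), the conclusion follows. Let $E\subset M$ be an $f$-invariant measurable set and apply the above with $\varphi=\chi_E$, so $\bar\varphi=\chi_E$ almost everywhere. Let $\hat E$ be the set of julienne density points of $E$ and $\hat E'$ that of $M\setminus E$; by (i), $\hat E$ is co-null in $E$ and $\hat E'$ is co-null in its complement, so it suffices to rule out both having positive measure. Since $\chi_E^+$ is constant along unstable leaves through $P$ and $\chi_E^-$ along stable leaves through $P$, both $E$ and $M\setminus E$ are essentially saturated by each strong foliation, and (ii) makes $\hat E$ and $\hat E'$ invariant under the stable and unstable holonomies. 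If both had positive measure we could choose $x\in\hat E$ and $y\in\hat E'$ on a single $su$-path (accessibility) and transport the julienne-density property leg by leg along that path, forcing $y$ to be a julienne density point of $E$ as well as of $M\setminus E$ --- impossible. Hence $\mu(E)\in\{0,1\}$ and $f$ is ergodic.

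The step I expect to be the main obstacle is (ii), the preservation of julienne density points under the strong holonomies; this is precisely the point at which the Pugh--Shub program stalled in the absence of center bunching. The remaining delicate bookkeeping is to make the holonomy distortion estimates uniform along an $su$-path of bounded length and to verify that ``essentially saturated'' is exactly the hypothesis that (ii) consumes.
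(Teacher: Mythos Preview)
The paper does not give a proof of this theorem: it is quoted from the cited references of Hertz--Hertz--Ures and Burns--Wilkinson and used as a black box throughout. There is therefore nothing in the paper's text to compare your proposal against.

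That said, your sketch is a faithful outline of the Burns--Wilkinson argument itself: the Hopf argument run through julienne density points, with center bunching automatic because $\dim E^c = 1$, and with the fake invariant foliations substituting for dynamical coherence. You have correctly isolated step~(ii)---julienne quasi-conformality of the strong holonomies---as the place where the real work lies and where the $C^2$ hypothesis and center bunching are consumed. As a proof \emph{outline} it is accurate; as a proof it is of course incomplete exactly at the point you flag, since establishing~(i) and~(ii) is the substance of \cite{BW-annals}.
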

Thus, in this setting, the generic partially hyperbolic diffeomorphism is
ergodic and an interesting question is to describe the non-ergodic ones.
Even in the simplest case, where the manifold is three-dimensional and each of
the bundles $\Eu$, $\Ec$, and $\Es$ is one-dimensional, this is a difficult open
problem.

\begin{question}
    Which 3-manifolds support non-ergodic partially hyperbolic
    diffeomorphisms?
    Further, what special properties do these non-ergodic systems have?
\end{question}
As a motivating example, first consider a hyperbolic toral automorphism
$A:\bbT^2 \to \bbT^2$ such as the map induced by the matrix \catmap.
Define $f$ on $\bbT^2 \times [0,1]$ as $A$ times the identity map on $[0,1]$.
This is a non-ergodic partially hyperbolic diffeomorphism on a manifold with
boundary.
The stable $\Es$ and unstable $\Eu$ directions come from the hyperbolic map $A$
and are tangent to the surfaces $\bbT^2 \times \{t\}$.  The center direction is
tangent to fibers of the form $\{x\} \times [0,1]$.

To construct an example without boundary,
suppose $B:\bbT^2 \to \bbT^2$ is another toral automorphism that
commutes with $A$.  Then, the identification $(x,1) \sim (B x, 0)$ on $\bbT^2
\times [0,1]$ produces a closed manifold $M_B$ with a non-ergodic
partially hyperbolic diffeomorphism $f_B:M_B \to M_B$ coming from $f$.

If $B$ is the identity, $M_B$ is the 3-torus.
If $B$ is minus identity, that is, $B(x)=-x$ on $\bbT^2 = \bbR^2 / \bbZ^2$, then
$M_B$ is double covered by the 3-torus.
The only remaining possibility is that $B$ is hyperbolic, in which case $f_B$
can be thought of as the time-one map of an Anosov flow on $M_B$.

While these simple constructions do not give all possible examples of
non-ergodic partially hyperbolic \emph{diffeomorphisms} in dimension three,
they do give all of the \emph{manifolds} where such non-ergodic examples are
known to exist.  This fact and the results of \cite{RHRHU-nil} lead to the
following conjectures.

\begin{conjecture}
    [Hertz-Hertz-Ures \cite{RHRHU-nil}] \label{mfldB}
    If a 3-manifold $M$ supports a non-ergodic partially hyperbolic
    diffeomorphism, then $M = M_B$ (as defined above)
    where
    $B$ is $\pm \id$ or is hyperbolic.
\end{conjecture}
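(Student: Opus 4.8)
Since the conjecture asserts a classification of \emph{all} supporting manifolds, the plan is to extract from non-ergodicity a rigid geometric object --- an invariant $su$-torus --- and then read off the topology of $M$ from it. Throughout I take ``non-ergodic'' to mean with respect to a preserved volume, so that $f$ is conservative. As $\dim M = 3$ and the splitting $TM = \Eu\oplus\Ec\oplus\Es$ is genuine, the center bundle $\Ec$ is one-dimensional, and Theorem \ref{accerg} applies in its contrapositive form: a conservative, non-ergodic such $f$ fails to be accessible. Thus some accessibility classes are not open, and the first task is to understand the set where this happens.

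First I would invoke the structure theory of accessibility classes in dimension three: the union $\Gamma$ of the non-open accessibility classes is a non-empty, compact, $f$-invariant lamination whose leaves are $C^1$ surfaces tangent to $\Es\oplus\Eu$. Passing to a minimal sublamination and using that $\Es$ contracts while $\Eu$ expands --- which obstructs a recurrent, non-compact leaf from limiting on itself --- I would argue that $\Gamma$ contains a compact leaf. A compact surface tangent to $\Es\oplus\Eu$ carrying the expansion and contraction of $f$ must have zero Euler characteristic, hence be a torus or Klein bottle; after replacing $f$ by a power and $M$ by a double cover if necessary, I obtain an $f$-invariant $2$-torus $T$ tangent to $\Es\oplus\Eu$ on which $f$ restricts to an Anosov diffeomorphism.

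Next I would recover the global topology of $M$ from $T$. The center direction $\Ec$ is transverse to $T$, and the goal is to show that sliding $T$ along center curves sweeps out all of $M$, exhibiting $M$ as a fiber bundle over $S^1$ with fiber $T$ and monodromy $B\in GL(2,\Z)$. Concretely, I would lift everything to the universal cover, use the (branching) center and $su$-foliation technology of Burago--Ivanov to produce a global product structure, identify $M\setminus T$ with a product $T\times(0,1)$, and reassemble to get the mapping-torus description $M = M_B$. The final step is algebraic. Because the global bundles $\Es$ and $\Eu$ are continuous and dynamically distinct, the monodromy $B$ preserves each of the two eigendirections of the hyperbolic matrix $A = (f|_T)_*$ on $H_1(T)=\Z^2$; hence $B$ commutes with $A$ in $GL(2,\R)$, is in particular diagonalizable, and lies in $GL(2,\Z)$. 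Such a $B$ is a unit in the real quadratic order $\Z[A]$, and a diagonalizable unit of a real quadratic order is either a root of unity --- forcing $B=\pm\id$ --- or has a real eigenvalue off the unit circle --- forcing $B$ to be hyperbolic. This is exactly the stated dichotomy and identifies $M$ with $M_B$ for $B=\pm\id$ or $B$ hyperbolic.

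The main obstacle is the pair of structural steps that produce and exploit the torus. The existence of a compact leaf in $\Gamma$ is delicate: one must rule out exceptional minimal laminations and non-compact recurrent $su$-leaves, and here the a priori unknown topology of $M$ gives little leverage, so the standard tools (Novikov-type theorems, tautness) must be adapted to laminations that are only $C^1$. Equally serious is building the $S^1$-bundle structure \emph{without} assuming dynamical coherence, since non-dynamically-coherent partially hyperbolic diffeomorphisms already exist on $\T^3$; the center ``foliation'' may branch, and controlling this branching near and away from $T$ is where I expect the real work to lie. By contrast, the algebraic final step is elementary once the torus bundle and its $su$-structure are in hand.
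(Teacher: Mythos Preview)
The statement you are attempting to prove is a \emph{conjecture}, not a theorem; the paper does not provide a proof, and indeed presents it as an open problem (Conjectures \ref{mfldB} and \ref{anosovtori}). So there is no paper proof to compare against.

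That said, your proposal has a genuine gap, and it is precisely the step you flag as the ``main obstacle'': producing a compact leaf in the $su$-lamination $\Gamma$. This is not merely delicate --- it is the whole conjecture. The trichotomy you implicitly rely on is Theorem \ref{nonacc}: non-accessibility forces either (1) a periodic incompressible $su$-torus, (2) a proper invariant lamination whose boundary leaves are planes with Anosov dynamics, or (3) a Reebless $su$-foliation. Your argument assumes you can always land in case (1), but cases (2) and (3) are not known to be excludable on an arbitrary $3$-manifold. Your heuristic that ``$\Es$ contracts while $\Eu$ expands, which obstructs a recurrent non-compact leaf from limiting on itself'' does not work: planes tangent to $\Eu\oplus\Es$ with dense periodic points and dense unstable leaves do arise in case (2), and minimal Reebless $su$-foliations with no compact leaves do arise in case (3).

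In fact the paper's main theorem is a concrete witness against your approach. For $f:\T^3\to\T^3$ homotopic to Anosov and non-accessible, Proposition \ref{usint} shows that case (3) holds and the $su$-foliation has \emph{no} compact leaf; Proposition \ref{usmin} shows it is minimal. So on the very manifold $M_B=\T^3$ (with $B=\id$), in the homotopy class the paper studies, your compact-leaf step fails outright. The paper instead proceeds by an entirely different mechanism --- a holonomy-invariant transverse measure and expansiveness --- to conclude conjugacy to Anosov, not to find a torus. The existence of an $su$-torus in the general non-ergodic case remains the content of Conjecture \ref{anosovtori}, which is strictly stronger than Conjecture \ref{mfldB} and is open.
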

Each example $f_B$ contains embedded tori tangent to $\Eu \oplus \Es$.
Such tori are clear obstructions to accessibility and they occur in every
known non-ergodic example.

\begin{conjecture}
    [Hertz-Hertz-Ures] \label{anosovtori}
    For every non-ergodic partially hyperbolic diffeomorphism on a
    3-manifold, M, there is an embedded, periodic, incompressible torus
    tangent to $\Eu \oplus \Es$.
\end{conjecture}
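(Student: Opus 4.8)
The plan is to derive the $\Eu \oplus \Es$-torus from the failure of accessibility, producing the surface first as a leaf of a dynamically defined lamination and only afterwards controlling its topology. Throughout I read the statement in its natural setting: $f$ preserves a smooth volume (so that ``ergodic'' has meaning), and since $\dim M = 3$ with both $\Eu$ and $\Es$ nontrivial, the center $\Ec$ is automatically one-dimensional. The first step is immediate from the results already quoted. By Theorem~\ref{accerg} a conservative, accessible, $C^2$ partially hyperbolic diffeomorphism with one-dimensional center is ergodic; taking the contrapositive, a non-ergodic $f$ must fail to be accessible. So I may assume from now on that $f$ is not accessible.

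Next I would feed the failure of accessibility into the structure theory of accessibility classes of \cite{RHRHU-accessibility}. For one-dimensional center, the set $\Gamma \subset M$ of points whose accessibility class is \emph{not} open is closed and $f$-invariant, and carries the structure of a lamination whose leaves are $C^1$ surfaces tangent to $\Eu \oplus \Es$; moreover $f$ is accessible precisely when $\Gamma = \emptyset$. Since $f$ is not accessible, $\Gamma$ is a nonempty $f$-invariant $su$-lamination. This manufactures the candidate surfaces out of the dynamics, with no topological hypotheses used so far.

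The crux is to extract a \emph{compact} leaf from $\Gamma$. A priori the leaves are only injectively immersed and could be noncompact, proper or densely recurrent, so one cannot simply pass to a closure. The strategy I would pursue is to take a minimal sublamination $\Lambda \subseteq \Gamma$ and to rule out the exceptional (non-compact-leaf) cases using the contraction and expansion of $\Es$ and $\Eu$ \emph{along} each leaf: on an $su$-leaf the holonomy of the ambient center direction must be compatible with the hyperbolicity of $f$ restricted to the leaf, and I would try to show that a nonproper or noncompact minimal $su$-lamination forces a violation of this compatibility, or a return map incompatible with volume-preservation. The desired conclusion is that $\Lambda$, and hence $\Gamma$, contains a compact leaf $S$. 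This is exactly the point at which the statement is open in general: on an arbitrary $3$-manifold there is no global coordinate system with which to control the topology of an $su$-lamination, and this is the principal obstacle.

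Granting a compact leaf $S$, the remaining steps are comparatively rigid. Since $\Gamma$ is $f$-invariant, $f$ permutes its leaves, and a standard compactness argument then yields a power $g := f^n$ and a compact leaf $S$ with $g(S) = S$. The map $g|_S$ expands $\Eu|_S$ and contracts $\Es|_S$, and these one-dimensional bundles span $TS$, so $g|_S$ is Anosov. By the classification of surface Anosov diffeomorphisms the only such closed surface is the $2$-torus (the Klein bottle is excluded, as it admits no Anosov diffeomorphism), so $S$ is an embedded, periodic torus tangent to $\Eu \oplus \Es$. For incompressibility I would use that the center direction is transverse to $S$ and that the dynamically defined invariant foliations contain no Reeb components, so that a Novikov-type argument makes the compact leaf $S$ $\pi_1$-injective; on the known model manifolds $M_B$ this can also be checked directly. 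The whole scheme thus hinges on the compact-leaf step, which is why the full conjecture remains open, whereas the torus $\T^3$ case, where the Franks--Manning semiconjugacy supplies the missing global structure, is reachable by the methods of Theorem~\ref{mainthm}.
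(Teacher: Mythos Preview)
The statement is presented in the paper as an open conjecture, not a theorem; the paper offers no proof of it and instead uses it as motivation for Theorem~\ref{mainthm}. So there is no proof in the paper to compare against. Your proposal is, appropriately, not a proof either but an outline of the natural strategy together with an honest identification of the gap.

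Your outline is essentially correct as far as it goes, and you locate the real obstruction: extracting a compact leaf from the $su$-lamination $\Gamma$. The paper's Theorem~\ref{nonacc} sharpens your second paragraph into a trichotomy---either there is already a periodic incompressible torus tangent to $\Eu\oplus\Es$, or $\Gamma$ is a proper lamination that extends to a foliation \emph{without} compact leaves (with Anosov dynamics on its boundary leaves), or $\Gamma=M$ is a Reebless foliation---so the conjecture amounts precisely to ruling out non-ergodicity in the latter two cases. Your minimal-sublamination idea does not obviously do this: in the second case the lamination has no compact leaf by construction, and in the third case the paper's own analysis on $\bbT^3$ (Propositions~\ref{usint} and~\ref{usmin}) exhibits a minimal $su$-foliation with no compact leaves that is compatible with all the hyperbolicity and volume-preservation constraints you invoke. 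So the ``violation of compatibility'' you hope for does not materialize in general, and this is why the conjecture is open.

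Two minor points on the steps after the gap. The ``standard compactness argument'' for periodicity of a compact leaf needs justification: one must explain why the iterates $f^k(S)$ cannot all be distinct, e.g.\ via finiteness of isotopy classes of incompressible tori or via the holonomy-invariant-measure count as in Corollary~\ref{finitesupports}. And incompressibility does not follow simply from transversality of $\Ec$; in the paper it is obtained only after one knows the ambient lamination extends to a Reebless foliation, so that Proposition~\ref{reebless} applies. These are repairable, but the compact-leaf step is not, with present methods.
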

Such a torus is an example of an \emph{Anosov torus} (as defined and studied in
\cite{RHRHU-tori}) and its existence implies that $M$ must be one of the $M_B$
discussed above.  In particular, if Conjecture \ref{anosovtori} is true, then
Conjecture \ref{mfldB} is also true as a consequence.

Suppose a diffeomorphism $f:M^3 \to M^3$ contains a torus $S=f^k(S)$ as in
Conjecture \ref{anosovtori}.
Since $S$ is incompressible, $\pi_1(S)$ injects into $\pi_1(M)$ and so $\pi_1(M)$
contains a copy of $\bbZ^2$ invariant under the group automorphism $f^k_*$.

If no such subgroup exists, then no such torus exists.  Unfortunately, for the
manifolds under consideration, this technique to rule out tori only works
in one specific case.

\begin{prop} \label{zz}
    Suppose $f$ is a diffeomorphism of $M_B$ where $B$ is $\pm \id$ or is
    hyperbolic.
    Then, exactly one of the following holds{:}
    \begin{itemize}
        \item $\pi_1(M_B)$ has an $f_*$-invariant subgroup isomorphic to
        $\bbZ^2$.
        \item $M_B = \bbT^3$ and $f_*$ is hyperbolic (when regarded as a $3
        \times 3$ matrix).
    \end{itemize}  \end{prop}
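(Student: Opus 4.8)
\smallskip
\noindent\textit{Proof proposal.}
The plan hinges on one elementary observation about $\mathrm{GL}(3,\bbZ)$: a matrix $g\in\mathrm{GL}(3,\bbZ)$ fails to be hyperbolic if and only if $\bbZ^3$ has a $g$-invariant subgroup isomorphic to $\bbZ^2$. Indeed, any invariant rank-$2$ subgroup has an invariant primitive hull, which is $\ker\phi$ for a primitive homomorphism $\phi\colon\bbZ^3\to\bbZ$ with $\phi\circ g=\pm\phi$, so $\pm1$ is a root of the (monic, integer, degree-$3$) characteristic polynomial of $g$; conversely such a $\phi$ produces an invariant $\bbZ^2$. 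And having $\pm1$ as an eigenvalue is equivalent to not being hyperbolic: if $g$ has an eigenvalue $\mu$ with $|\mu|=1$, then either $\mu=\pm1$, or $\mu\notin\bbR$ and then $\bar\mu$ is also an eigenvalue, forcing the third (real) eigenvalue to equal $\det g=\pm1$ since $\mu\bar\mu=1$; the reverse implication is trivial. (Here one uses that a rational root of a monic integer polynomial is an integer dividing the constant term $\pm\det g=\pm1$.)

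With this in hand I would split on $B$. Suppose first $M_B\neq\bbT^3$, that is, $B=-\id$ or $B$ hyperbolic; then $\pi_1(M_B)$ is nonabelian, so the second alternative fails, and it suffices to produce a characteristic subgroup isomorphic to $\bbZ^2$. Write $\Gamma=\pi_1(M_B)=\bbZ^2\rtimes_B\bbZ$ with fiber subgroup $\bbZ^2=\langle a,b\rangle$ and $t$ a generator of the $\bbZ$ factor, so that, reading $\bbZ^2$ additively, conjugation by $t$ acts as $B^{\pm1}$. One computes $[t,a]=(B^{\pm1}-\id)a$ and $[t,b]=(B^{\pm1}-\id)b$; since $(B^{\pm1}-\id)\bbZ^2$ is $B$-invariant, hence normal in $\Gamma$, and the conjugation relations become trivial in $\Gamma/(B^{\pm1}-\id)\bbZ^2$, that quotient is abelian, so $[\Gamma,\Gamma]=(B^{\pm1}-\id)\bbZ^2$. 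Because $1$ is not an eigenvalue of $B$ (clear for $B=-\id$; for $B$ hyperbolic $|\operatorname{tr}B|>2$, so $\det(B-\id)\neq0$), the endomorphism $B^{\pm1}-\id$ is injective on $\bbZ^2$, whence $[\Gamma,\Gamma]\cong\bbZ^2$. The commutator subgroup is characteristic, so it is $f_*$-invariant, and the first alternative holds.

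If instead $M_B=\bbT^3$, then $\Gamma=\bbZ^3$ and $f_*\in\mathrm{GL}(3,\bbZ)$, and the opening observation says exactly one of ``$f_*$ is hyperbolic'' and ``$\bbZ^3$ has an $f_*$-invariant $\bbZ^2$'' holds — which is precisely the asserted dichotomy in this case. Finally, the two alternatives can never hold together: this could only be an issue when $M_B=\bbT^3$, and there it is ruled out by the same observation. That completes the plan.

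The genuinely routine steps — identifying $[\Gamma,\Gamma]$ with $(B-\id)\bbZ^2$ and checking $\det(B-\id)\neq0$ — I expect to cause no trouble. The step that actually carries the argument, and the one I would be most careful to get right, is the dimension-three input: a non-hyperbolic matrix in $\mathrm{GL}(3,\bbZ)$ must have $\pm1$ among its eigenvalues. It is this, together with the dual description of invariant $\bbZ^2$'s via primitive linear functionals, that makes ``hyperbolic'' and ``has an invariant $\bbZ^2$'' complementary on $\bbT^3$; the analogous statement fails in higher dimensions, which is why the proposition is special to the $3$-torus.
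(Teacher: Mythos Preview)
Your argument is correct. The paper does not actually prove this proposition; immediately after the statement it says only ``The proof is basic group theory and is left as an exercise,'' so there is nothing to compare your approach against --- you have supplied the exercise.

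One small slip worth fixing: the parenthetical ``for $B$ hyperbolic $|\operatorname{tr}B|>2$'' is only valid when $\det B=1$. If $\det B=-1$ the eigenvalues are real with product $-1$, and hyperbolicity is equivalent to $\operatorname{tr}B\neq 0$ (e.g.\ $\operatorname{tr}B=1$ gives the golden-ratio matrix). This does not affect your proof, since all you need is that $1$ is not an eigenvalue of $B$, which is immediate from the definition of hyperbolicity; you can simply replace the parenthetical by that observation.
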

The proof is basic group theory and is left as an exercise.

This proposition, taken with Conjecture \ref{anosovtori}, suggests the
following conjecture, which was the main motivation in developing Theorem
\ref{mainthm}.

\begin{conjecture} \label{wanted}
    Suppose $f:\T^3 \to \T^3$ is a conservative partially hyperbolic $C^2$
    diffeomorphism homotopic to a linear Anosov map $A:\T^3 \to \T^3$.
    Then, $f$ is ergodic.
\end{conjecture}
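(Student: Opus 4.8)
The plan is to argue by contradiction. Suppose $f$ is not ergodic. Since $f$ has one-dimensional center, Theorem \ref{accerg} forces $f$ to be non-accessible, so the Hertz-Hertz-Ures structure theory for one-dimensional center \cite{RHRHU-accessibility} applies: the union $\Lambda \subset \T^3$ of the non-open accessibility classes of $f$ is nonempty, compact, $f$-invariant, and laminated by $C^1$ complete surfaces tangent to $\Eu \oplus \Es$, each such surface being saturated by the strong unstable foliation $\Wu$ and the strong stable foliation $\Ws$. On the other hand, Theorem \ref{mainthm}, applied to the non-ergodic $f$, produces a homeomorphism $h$ with $h \circ f = A \circ h$. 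The first goal is to combine these two facts to conclude $\Lambda = \T^3$.

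Since $h$ conjugates $f$ to $A$, it carries the stable set $\{y : \dist(f^n x, f^n y) \to 0\}$ of each point $x$ homeomorphically onto the stable set of $h(x)$ under $A$, and likewise for unstable sets; comparing dimensions, the stable and unstable sets of $f$ have the same dimensions as those of $A$, which sum to three. After replacing $f$ by $f\inv$ if necessary we may assume the center of $f$ is contracting, so that the unstable set of each point is the one-dimensional leaf $\Wu(x)$ and $A$ has one-dimensional unstable bundle; then $h$ maps $\Wu$-leaves of $f$ onto the unstable leaves of $A$. The latter are cosets of the eigenline of the largest eigenvalue of $A$, and since a hyperbolic element of $\mathrm{GL}_3(\Z)$ has irreducible characteristic polynomial over $\mathbb{Q}$ (a rational eigenvalue would lie on the unit circle), $A$ has no invariant rational subspace other than $\{0\}$ and $\R^3$; hence this eigenline projects to a dense line in $\T^3$ and the unstable foliation of $A$ is minimal. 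Consequently $h(\Lambda)$ is a nonempty compact set saturated by the unstable foliation of $A$, so it contains the closure of an unstable leaf --- that is, all of $\T^3$. Thus $\Lambda = \T^3$: the bundle $\Eu \oplus \Es$ integrates to an $f$-invariant foliation $\Wus$ of the whole torus, and $\Wus$ has no compact leaf (a compact leaf would be an Anosov torus in the sense of \cite{RHRHU-tori}, forcing an $f^k_*$-invariant $\Z^2 \subset \pi_1(\T^3)$ for some $k$, which is excluded because every power of the hyperbolic matrix $A$ is again hyperbolic with irreducible characteristic polynomial; cf.\ Proposition \ref{zz}).

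It remains to show that a conservative $C^2$ partially hyperbolic diffeomorphism of $\T^3$ homotopic to $A$ that carries such an invariant foliation $\Wus$ must in fact be ergodic --- and this is the step where I expect the genuine difficulty to lie. The situation now mirrors that of $A$ itself, which is likewise non-accessible, has $\Wus$ equal to its irrational linear foliation, and is ergodic. My plan would be to run a Hopf-type argument along the leaves of $\Wus$ --- inside a leaf $f$ is hyperbolic-like, expanding along $\Wu$ and contracting along $\Ws$, so leafwise ergodicity follows provided the conditionals of Lebesgue measure on the leaves and the strong holonomies inside the leaves are absolutely continuous --- and to combine it with ergodicity of the transverse dynamics: $h$ conjugates $\Wus$ to the uniquely ergodic linear foliation of $A$, so $f$ acts on the one-dimensional transverse structure conjugately to the minimal, uniquely ergodic linear transverse action. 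The obstacle is that gluing these two ingredients together amounts essentially to the assertion that $h$ is absolutely continuous --- a rigidity statement --- and ruling out a non-ergodic conservative diffeomorphism in this configuration is precisely the hard, still-open case of Conjectures \ref{pughshub3} and \ref{anosovtori} in the present setting. Everything preceding that step --- extracting $\Lambda$, transporting it by $h$, and collapsing it onto $\T^3$ via irreducibility of the characteristic polynomial --- is comparatively routine once Theorem \ref{mainthm} and the cited accessibility theory are granted.
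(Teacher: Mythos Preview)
The statement you are attempting to prove is Conjecture~\ref{wanted}, and the paper does \emph{not} prove it. Immediately after stating it, the authors write that Theorem~\ref{mainthm} ``answers the question in spirit, but as the conjugacy to the Anosov system may not be absolutely continuous, a counter example is still possible.'' So there is no proof in the paper to compare your proposal against; the conjecture is presented as open.

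Your proposal is, accordingly, not a proof, and you recognize this yourself in the last paragraph. The genuine gap is exactly the one you name: from the existence of a minimal $f$-invariant foliation $\Wus$ tangent to $\Eu\oplus\Es$, one still cannot conclude ergodicity. Your suggested route --- a leafwise Hopf argument glued to the transverse dynamics via $h$ --- requires absolute continuity of $h$ (or at least of the disintegration of Lebesgue along $\Wus$), and nothing in the hypotheses provides that. The paper does exactly what can be done short of this: Section~\ref{expt} shows the central Lyapunov exponent must vanish almost everywhere if $f$ is non-ergodic, and Section~\ref{three-nh} closes the gap under the extra hypothesis of $3$-normal hyperbolicity (via $C^2$ holonomies and Lemma~\ref{ergaction}). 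Without such an extra hypothesis the problem remains open.

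One further remark on your preliminary steps. Your derivation of $\Lambda=\T^3$ by pushing $\Lambda$ through the conjugacy $h$ and invoking minimality of the unstable foliation of $A$ is logically valid once Theorem~\ref{mainthm} is granted, but it is needlessly indirect: the paper establishes exactly this fact --- that $\Eu\oplus\Es$ integrates to a minimal foliation with no compact leaves --- as Propositions~\ref{usint} and~\ref{usmin}, \emph{before} and \emph{en route to} the conjugacy, using Theorem~\ref{nonacc}, Lemma~\ref{noad}, and Plante's holonomy-invariant measure theory. So the ``comparatively routine'' portion of your outline is reproving, via the main theorem, intermediate results that the paper already obtained on the way to that theorem.
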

Theorem \ref{mainthm} instead shows that under these assumptions, a system is
either ergodic, or is topologically conjugate to a well-understood, linear,
ergodic example.
This answers the question in spirit, but as the conjugacy to the Anosov system
may not be absolutely continuous, a counter example is still possible.  Such a
counterexample must be highly pathological in nature.

Suppose $f:\bbT^3 \to \bbT^3$ is a conservative, non-ergodic, partially
hyperbolic diffeomorphism homotopic to Anosov.  Then{:}
\begin{itemize}
    \item the conjugacy $h$ given by Theorem \ref{mainthm} satisfies
    $h(W_f^*)=W_A^*$ for $*=u,s,c$,
    \item the central Lyapunov exponent of $f$ is zero almost everywhere,
    \item $f$ fails to be 3-normally hyperbolic, that is, at some point
    $p \in \bbT^3$, the splitting fails to satisfy the inequality
    \[
        \|T_p f|_{\Es}\| < \|T_p f|_{\Ec}\|^3 < \|T_p f|_{\Eu}\|.
    \]  \end{itemize}
Note that by the second condition, the center must be \emph{non-uniformly}
close to an isometry, but by the third condition, it cannot be \emph{uniformly}
close to an isometry.
We give the precise statements and proofs of these three listed properties
in Sections \ref{conjsec}, \ref{expt}, and \ref{three-nh} respectively.

Under addition assumptions, similar results hold for higher-dimension tori.
These are described in Section \ref{highdim}.

\section{Definitions} \label{definitions} 
Functions $f$ and $g$ are \emph{topologically conjugate} if there is a
homeomorphism $h$ such that $f \circ h = h \circ g$.

A diffeomorphism of a manifold is \emph{conservative} if it preserves a finite
measure equivalent to Lebesgue.
A diffeomorphism $f:M \to M$ is \emph{ergodic} if it is conservative and any
$f$-invariant subset of $M$ either has zero measure or full measure.
For convenience, we take a \emph{non-ergodic} diffeomorphism to mean a
conservative diffeomorphism which is not ergodic.

A diffeomorphism $f$ on a compact Riemannian manifold $M$
is \emph{point-wise partially hyperbolic}
if there is a $Tf$-invariant splitting
$TM = \Es \oplus \Ec \oplus \Eu$
and functions $\sigma,\mu:M \to \bbR$ such that $\sigma < 1 < \mu$ and
\[
        \|Tf v^s\| < \sigma(p) < \|Tf v^c\| < \mu(p) < \|Tf v^u\|
\]
for all $p \in M$ and unit vectors
$v^s \in \Es_p$, $v^c \in \Ec_p$, and $v^u \in \Eu_p$. Further, $f$ is
\emph{absolutely partially hyperbolic} if the functions $\sigma$ and $\mu$ can be
taken to be constant.

The distinction between point-wise and absolute partially hyperbolicity is of
critical importance when studying systems on the 3-torus.  While there are
always unique foliations $\Wu$ and $\Ws$ tangent to $\Eu$ and $\Es$, the center
bundle $\Ec$ is not necessarily integrable.
A partially hyperbolic diffeomorphism is \emph{dynamically coherent} if there
are invariant foliations tangent to
$\Ec \oplus \Eu$ and $\Ec \oplus \Es$.

Brin, Burago, and Ivanov proved that every absolutely partially hyperbolic
diffeomorphism on the 3-torus is dynamically coherent \cite{BBI2}.  Soon after,
Rodriguez Hertz, Rodriguez Hertz, and Ures gave an example of a point-wise
partially hyperbolic system on $\bbT^3$ which is not dynamically coherent
\cite{RHRHU-nondyn}.
Further, Hammerlindl gave a classification result for absolutely partially
hyperbolic systems on the 3-torus \cite{ham-thesis}.  As a consequence of the
classification, these systems naturally fall into two distinct groups{:}
\begin{itemize}
    \item
    If $f$ is homotopic to Anosov, then every center leaf is dense in $\bbT^3$
    and is homeomorphic to a line.
    \item
    If $f$ is not homotopic to Anosov, then every center leaf is a circle.
    These circles form a trivial fiber-bundle over $\bbT^2$ and $f$ can be
    thought of as a skew-product.
\end{itemize}
As all known non-ergodic examples fall into the ``skew-product'' case, this
dichotomy provides further motivation for Conjecture \ref{wanted} and Theorem
\ref{mainthm}.

\begin{notation}
    Throughout this paper, ``partial hyperbolicity'' is taken to mean
    point-wise partial hyperbolicity unless the qualifier ``absolute'' is
    used.  In particular, Theorem \ref{mainthm} is proved in the point-wise
    case.
\end{notation}
\section{Outline and Externalities} 

The proof of Theorem \ref{mainthm} breaks into the following steps.
First, using results discovered for three-dimensional,
non-accessible systems, we show there is a foliation $\Wus$ tangent to $\Eu
\oplus \Es$.  By the work of Plante, associated to this foliation is a
holonomy invariant measure $\mu$, unique up to a constant factor.
This measure corresponds to an element of the cohomology $H^1(\T^3,\R)$, and as
$f$ acts hyperbolically on the cohomology,
$f^* \mu = \lambda \mu$ for some $\lambda < 1$.  Then $\mu(f^n \circ \gamma) \to 0$ as
$n \to \infty$ for any curve $\gamma$ transverse to $\Wus$ which implies that $f$
is topologically contracting in the center direction $\Ec$.  From this, we
deduce that $f$ is expansive.  The work of Vieitez then shows that $f$ is
conjugate to Anosov.

In the next section, we assume throughout that $f:\T^3 \to \T^3$ is a partially
hyperbolic system homotopic to Anosov.  To avoid confusion, we list in
advance the general theorems used.

Given a diffeomorphism $f:M \to M$, an injectively immersed submanifold $S
\subset M$ has \emph{Anosov dynamics} if $f^k(S) = S$ for some non-zero integer
$k$ and $f^k|_S$ is Anosov.  We say further that $S$ has Anosov dynamics
\emph{with dense periodic points} if $\Per(f^k|_S)$ is dense in the intrinsic
topology of $S$.

\begin{thm}
    [Hertz-Hertz-Ures \cite{RHRHU-nil}] \label{nonacc}
    Let $f:M \to M$ be a conservative partially hyperbolic diffeomorphism of an
    orientable 3-manifold $M$. Suppose that the bundles $E^*$ are also
    orientable, $* = s, c, u$, and that $f$ is not accessible. Then one of
    the following possibilities holds{:}
    \begin{enumerate}
        \item
        there is an $f$-periodic incompressible torus tangent to $\Eu \oplus \Es$;
        \item
        there is an $f$-invariant lamination $\varnothing  \ne  \Gamma(f)  \ne  M$
        tangent to $\Eu \oplus \Es$ that trivially extends to a (not necessarily
        invariant) foliation without compact leaves of $M$. Moreover, each
        boundary leaf of $\Gamma(f)$ has Anosov dynamics with
        dense periodic points;
        \item
        there is a Reebless invariant foliation tangent to $\Eu \oplus \Es$.
    \end{enumerate}  \end{thm}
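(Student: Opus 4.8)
The plan is to study the closed, $f$-invariant set $\Gamma(f)$ consisting of the points whose accessibility class fails to be open, drawing on the structure theory of accessibility classes (as in \cite{RHRHU-accessibility}): each class is a path-connected immersed surface saturated by the strong foliations $\Wu$ and $\Ws$, and $\Gamma(f)$ is laminated by those of its classes, which are complete $C^1$ surfaces tangent to $\Eu \oplus \Es$. Since $M$ is connected, if every accessibility class were open there would be exactly one of them, so $f$ would be accessible; hence the hypothesis forces $\Gamma(f) \ne \varnothing$, and I would split the argument according to whether $\Gamma(f)$ is all of $M$ or a proper sublamination.

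If $\Gamma(f) = M$, then $\Eu \oplus \Es$ is uniquely integrable and its integral surfaces form an $f$-invariant foliation $\F$. Either $\F$ is Reebless, which is conclusion (3), or $\F$ contains a Reeb component. The boundary leaf of a Reeb component is a compact surface tangent to $\Eu \oplus \Es$; it carries the two transverse line fields $\Es$ and $\Eu$, so it has zero Euler characteristic, and orientability excludes the Klein bottle, leaving a torus. Distinct Reeb components are disjoint open sets of positive measure which $f$ permutes (it maps $\F$ to $\F$), so conservativity and Poincar\'e recurrence in the finite invariant measure forbid an infinite orbit; thus some Reeb component, hence its boundary torus, is $f$-periodic. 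By \cite{RHRHU-tori} a torus tangent to $\Eu \oplus \Es$ is incompressible, giving conclusion (1).

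If $\varnothing \ne \Gamma(f) \ne M$, I would first observe that a compact leaf of $\Gamma(f)$ is, by the same reasoning plus \cite{RHRHU-tori}, an $f$-periodic incompressible torus tangent to $\Eu \oplus \Es$, which is conclusion (1). So assume $\Gamma(f)$ has only noncompact leaves, aiming for conclusion (2). The complementary regions of $\Gamma(f)$ are nonempty open sets of positive measure, each saturated by $\Wu$ and $\Ws$: if a strong leaf through a point $x$ of a complementary region met a boundary leaf $L$, then since $L$ is saturated that whole strong leaf, and in particular $x$, would lie in $L$, a contradiction. As $f$ permutes the complementary regions, conservativity again forces finite orbits, so every complementary region — and therefore every boundary leaf $L$ of $\Gamma(f)$ — is $f$-periodic, say $f^{k}(L)=L$. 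Being an accessibility class, $L$ is saturated by $\Wu$ and $\Ws$, so $TL = \Es|_L \oplus \Eu|_L$; since $M$ is compact the strong bundles are uniformly exponentially contracted and expanded, so $f^{k}|_L$ is Anosov. For the remaining assertions in (2) — that the complementary regions are product regions, so that $\Gamma(f)$ trivially extends to a foliation, and that this foliation has no compact leaves — I would use the saturated, product-like structure of the complementary regions together with the hyperbolic dynamics on their boundary leaves; the extension inherits the absence of compact leaves from $\Gamma(f)$.

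I expect the main obstacle to be the final clause of conclusion (2): that the Anosov map $f^{k}|_L$ on a \emph{noncompact} boundary leaf has \emph{dense} periodic points, since an Anosov diffeomorphism of a noncompact surface can be very far from nonwandering. The resolution should exploit that $L$ is pinned in $M$ as the boundary of a periodic complementary region and that $f$ is conservative, transferring recurrence of the ambient measure to recurrence of $f^{k}|_L$ and then applying the Anosov closing lemma on $L$. Making this transfer precise — controlling the ends of $L$ and relating orbits in the positive-measure gap to orbits on its boundary leaf — is the delicate point, with the topological control of the complementary regions a close second.
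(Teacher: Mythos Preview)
The paper does not prove this theorem at all: it is stated in the section ``Outline and Externalities'' as an external result of Hertz--Hertz--Ures \cite{RHRHU-nil}, and is invoked as a black box in the proof of Proposition~\ref{usint}. There is therefore no proof in the paper to compare your proposal against.

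That said, your outline follows the standard strategy of \cite{RHRHU-nil} (analyse $\Gamma(f)$, split on whether it is all of $M$, and in the proper case study boundary leaves of complementary regions), and you have correctly identified the two genuinely hard points: (i) showing that the complementary regions are $I$-bundles so that the lamination trivially extends to a foliation without compact leaves, and (ii) showing that the Anosov dynamics on a noncompact boundary leaf has dense periodic points. Your suggested mechanism for (ii) --- transfer ambient recurrence from the conservative system to the boundary leaf and close with the Anosov closing lemma --- is in the right spirit, but making it precise requires the structure of the complementary region (its $I$-bundle structure over the boundary leaf) and some control of how orbits in the gap shadow orbits on $L$; this is exactly the content developed in \cite{RHRHU-nil} and is not something you can get from soft arguments. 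One small gap earlier: your ``by the same reasoning'' for the periodicity of a compact leaf of $\Gamma(f)$ does not quite work as stated, since compact leaves have measure zero and are not obviously finite in number; one instead argues via the complementary regions they bound, or via the fact that such tori are isolated among leaves tangent to $\Eu\oplus\Es$.
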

For our problem domain, we show that the third case is the only case possible.
Then, we may use the results of Novikov compact leaf theory, as was
generalized to the $C^0$ case by Solodov \cite{solodov1984components}.

\begin{prop} \label{reebless}
    If $\F$ is a Reebless codimension one $C^0$ foliation $\F$ of
    a 3-manifold $M$, then
    \begin{itemize}
        \item there is no closed null-homotopic curve transverse to $\F$, and
        \item for every leaf $L$, the induced map $\pi_1(L) \to \pi_1(M)$ is
        injective.
    \end{itemize}  \end{prop}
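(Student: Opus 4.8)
The plan is to prove the contrapositive of each bullet: assuming that a null-homotopic closed transversal exists, or that some leaf $L$ fails to be $\pi_1$-injective, I will produce a Reeb component, contradicting the hypothesis that $\F$ is Reebless. Both arguments funnel through the same object, Novikov's \emph{vanishing cycle}: a continuous family of loops $\gamma_t \colon S^1 \to M$, $t \in [0,1]$, each contained in a leaf $L_t$, depending continuously on $t$ and transverse in $t$ to $\F$, with $\gamma_0$ null-homotopic in $L_0$ but $\gamma_t$ essential in $L_t$ for all $t > 0$. The step I would quote rather than reprove is the classical fact that a vanishing cycle forces a Reeb component: Novikov's construction assembles, from the family $\gamma_t$, the disks they bound in $M$, and the holonomy of $\F$ along them, an embedded solid torus whose restricted foliation is the Reeb foliation. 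The $C^0$ version of this construction, together with the $C^0$ holonomy theory it needs, is exactly what Solodov \cite{solodov1984components} supplies.

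It then remains to extract a vanishing cycle from each hypothesis. For the first bullet, let $\tau$ be a closed null-homotopic transversal; it bounds a continuous map $\sigma \colon D^2 \to M$ with $\sigma|_{\partial D^2} = \tau$. Put $\sigma$ in general position with respect to $\F$ in Solodov's sense, so that the pulled-back singular foliation $\sigma^* \F$ on $D^2$ has isolated singularities and $\partial D^2$ is transverse to it. A Poincar\'e--Hopf count gives total index $\chi(D^2) = 1$, hence a ``center'' singularity; among all disks spanning transversals one then runs the standard innermost argument, minimizing the complexity of $\sigma^*\F$, which shrinks concentric leaf curves of $\sigma^*\F$ toward such a center while the loops they trace in the nearby leaves stay essential until, at the innermost level, one degenerates to a loop bounding in its own leaf — a vanishing cycle.

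For the second bullet, let $L$ be a leaf with $\pi_1(L) \to \pi_1(M)$ non-injective and let $\gamma \subset L$ be essential in $L$ but null-homotopic in $M$. If $\gamma$ has nontrivial germinal holonomy on one side, the holonomy contraction already pushes $\gamma$ to a nested family of essential loops in nearby leaves accumulating on a loop that bounds — a vanishing cycle. Otherwise $\gamma$ has trivial germinal holonomy, so a product neighborhood of $\gamma$ carries essential loops in all nearby leaves; spanning $\gamma$ by a singular disk $\sigma \colon D^2 \to M$ in general position with respect to $\F$ and running the same innermost-disk argument on $\sigma^* \F$ — now with $\partial D^2$ mapping into the single leaf $L$ — produces the innermost level at which essentiality first fails, again a vanishing cycle.

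I expect the one genuine obstacle to be the $C^0$ regularity. In the smooth category everything above is routine: transversality is generic, the singularities of $\sigma^* \F$ are generically centers and saddles, and a Morse-type index argument on $D^2$ organizes the innermost extraction. For $C^0$ foliations none of that is literally available, and one must instead work inside Solodov's framework of $C^0$ foliations in general position and $C^0$ holonomy; granting that framework, the combinatorial heart of Novikov's proof — the innermost extraction of a vanishing cycle and the Reeb component it yields — goes through unchanged. The remaining points, such as passing to a finite cover to make $M$ orientable and $\F$ transversely orientable and using that $M$ is closed (as it is in our application), are handled in the standard way and do not affect the conclusion.
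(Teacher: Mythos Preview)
Your sketch is a faithful outline of the classical Novikov argument---extract a vanishing cycle from either hypothesis, then invoke the fact that a vanishing cycle forces a Reeb component---and you correctly flag that the $C^0$ regularity is the only real issue, deferring that to Solodov's work. As a proof strategy there is nothing wrong here.

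However, the paper does not give its own proof of this proposition at all. It is listed in the ``Externalities'' section as a quoted result: the text introduces it with ``Then, we may use the results of Novikov compact leaf theory, as was generalized to the $C^0$ case by Solodov \cite{solodov1984components},'' and the proposition is stated without proof, as a black-box input to the main argument. So there is no paper-proof to compare your attempt against; you have supplied a proof sketch where the authors simply cite the literature. Your outline is consistent with what one finds in Novikov's original work and in Solodov's $C^0$ extension, which is exactly what the paper is invoking.
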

A key intermediate in proving Theorem \ref{nonacc} is the following, which
will be used specifically in the next section.

\begin{prop}
    [Hertz-Hertz-Ures \cite{RHRHU-nil}] \label{subsetdynamics}
    Let $f:M \to M$ be conservative and partially hyperbolic with
    one-dimensional center.
    If $\Lambda$ is a closed, $f$-invariant subset of $M$
    consisting of leaves tangent to $\Eu \oplus \Es$, then every component of
    $\partial \Lambda$ is a leaf having Anosov dynamics with dense periodic
    points.
\end{prop}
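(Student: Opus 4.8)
The plan is to reduce the proposition to a single recurrence statement and then feed it into classical hyperbolic dynamics. Two preliminary observations carry out most of the reduction. First, the Anosov conclusion is automatic once periodicity is known: if a leaf $L$ tangent to $\Eu\oplus\Es$ satisfies $f^k(L)=L$ for some $k\ge 1$, then, because $\Eu$ and $\Es$ are $Tf$-invariant, $Df^k$ preserves the splitting $T_xL=\Eu_x\oplus\Es_x$ along $L$ and expands $\Eu|_L$, contracts $\Es|_L$, uniformly with the ambient partially hyperbolic rates; so $f^k|_L$ is uniformly hyperbolic, i.e.\ an Anosov diffeomorphism of $L$. Second, by the Anosov closing lemma (whose constants depend only on the hyperbolicity rates, so it applies on a possibly non-compact leaf), periodic points of $f^k|_L$ are dense in its non-wandering set; hence density of $\Per(f^k|_L)$ in $L$ follows once we know that every point of $L$ is non-wandering for $f^k|_L$. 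Thus it suffices to prove: (i) each component of $\partial\Lambda$ is a single leaf $L$, and some iterate fixes it, $f^k(L)=L$; and (ii) every point of $L$ is non-wandering for $f^k|_L$. Both are recurrence assertions, and the engine for each is Poincar\'e recurrence for the conservative map $f$.

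For (i), note first that each leaf of $\Lambda$, being connected, lies entirely in the interior of $\Lambda$ or entirely in $\partial\Lambda$, so $\partial\Lambda$ is a union of leaves; and $f$ permutes the connected components of the open set $M\setminus\Lambda$. Each such component has positive measure, and since the invariant measure is finite, the orbit of a component cannot consist of pairwise disjoint sets, so every component $U$ satisfies $f^k(U)=U$ for some $k\ge 1$, whence $f^k$ fixes $\overline U$ and permutes the boundary leaves in $\partial U$. To pass from the region to an individual leaf, I would argue locally: a boundary leaf $L$ admits at each point $p$ a one-sided flowbox $\mathcal B\cong D\times[0,\varepsilon)$ with $D$ a disc tangent to $\Eu\oplus\Es$, the $[0,\varepsilon)$-coordinate along $\Ec$, $D\times\{0\}\subset L$, and $\mathcal B\setminus L$ contained in the complementary region $U$ (this uses precisely that $L$ is a boundary leaf and that $\Ec$ is one-dimensional, so that ``one side'' makes sense). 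Applying Poincar\'e recurrence to the positive-measure set $\mathcal B\setminus L$ inside the periodic region $U$ produces returns $f^{km}(x)\in\mathcal B$ with $x\in\mathcal B$ of arbitrarily small $\Ec$-height; passing to the limit as the height tends to $0$ and using continuity of the bundles and of the foliations $\Wu,\Ws$, one finds that the leaves $f^{km}(L)$ accumulate on $L$. Since a boundary leaf is one-sidedly isolated among the leaves of $\Lambda$ in $\overline U$, this forces $f^{km}(L)=L$ for some $m\ge 1$; the same isolation shows distinct boundary leaves lie in distinct components of $\partial\Lambda$, so each component is a single leaf.

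For (ii), fix $k$ with $f^k(L)=L$, write $g=f^k|_L$, and suppose some $p\in L$ were wandering for $g$: there is an open $V\ni p$ in $L$ with $g^n(V)\cap V=\varnothing$ for all $n\ge 1$. Take a thin one-sided flowbox $\mathcal W\subset U$ over $V$ as above. The plan is to show the sets $f^{kn}(\mathcal W)$, $n\ge 0$, are pairwise disjoint: each $f^{kn}(\mathcal W)$ is a one-sided transverse neighbourhood of $g^n(V)\subset L$, and since the $g^n(V)$ are pairwise disjoint inside $L$, while $L$ sits in $M$ with a local product structure coming from $\Wu,\Ws$ and $\Ec$, sufficiently thin such neighbourhoods of disjoint pieces of $L$ are disjoint. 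Then $m(f^{kn}(\mathcal W))=m(\mathcal W)>0$ for all $n$ contradicts finiteness of the measure. Hence every point of $L$ is non-wandering for $g$, and by the reduction above $\Per(g)$ is dense in $L$.

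The main obstacle is controlling the $\Ec$-direction near a boundary leaf under iteration. Both the step ``$f^{km}(L)$ accumulates on $L$'' $\Rightarrow$ ``$f^{km}(L)=L$'' and the disjointness of the boxes $f^{kn}(\mathcal W)$ require that one-sided flowboxes be carried by $f^k$ onto sets that are still thin transverse neighbourhoods of their base leaves, which is not automatic: $\|Tf|_{\Ec}\|$ need not be less than $1$, so $\Ec$ may be expanded, and a boundary leaf need not be properly embedded, so a wide $\Ec$-neighbourhood of one piece of $L$ could a priori return near another piece of $L$. Establishing the requisite one-sided isolation and quasi-properness of boundary leaves, using the uniform partially hyperbolic estimates together with the one-dimensionality of $\Ec$, is the technical heart of the argument; the rest is bookkeeping with Poincar\'e recurrence.
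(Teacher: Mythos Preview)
The paper does not prove this proposition; it is quoted as an external result of Rodriguez Hertz--Rodriguez Hertz--Ures from \cite{RHRHU-nil} and used as a black box. So there is no ``paper's own proof'' to compare against.

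On the merits of your outline: the reduction is sound --- once a leaf $L$ is $f^k$-periodic, the restriction $f^k|_L$ is Anosov with the inherited splitting, and the closing lemma reduces density of periodic points to showing $NW(f^k|_L)=L$. Your use of Poincar\'e recurrence to force periodicity of complementary regions is also correct.

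However, you have correctly identified, and then not resolved, the actual difficulty. Two specific points:
\begin{itemize}
\item In step~(ii), the claim that ``sufficiently thin one-sided neighbourhoods of disjoint pieces of $L$ are disjoint'' is not justified and is in general false: a boundary leaf need not be properly embedded, so $g^n(V)$ and $V$ can be disjoint in the leaf topology of $L$ yet arbitrarily close in $M$, and any fixed-thickness collars will overlap. Worse, you cannot simply shrink the thickness once and for all, since $f^k$ may expand the center direction and your iterated boxes $f^{kn}(\mathcal W)$ will not stay thin. Some additional input is needed, for instance a volume or holonomy argument that exploits the one-sided collar structure of $U$ near $L$ rather than disjointness of ambient boxes.
\item In step~(i), the passage from ``returns of nearby points in $U$'' to ``$f^{km}(L)$ accumulates on $L$'' is not immediate: recurrence of a point $x\in\mathcal B\cap U$ tells you $f^{km}(x)$ is near $L$, but the center arc from $f^{km}(x)$ to $f^{km}(L)$ may be long, so you have not located $f^{km}(L)$. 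You need to argue instead with the center arc in $\mathcal B$ from $f^{km}(x)$ down to $L$, and track which boundary leaf of $U$ it hits.
\end{itemize}
Your final paragraph is an honest admission that the technical heart is missing. As written this is a proof sketch with the key lemma left open, not a proof; filling it requires the kind of one-sided collar analysis carried out in \cite{RHRHU-nil}.
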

These boundary leaves also satisfy the following.

\begin{prop}
    [Franks \cite{Franks1}] \label{udense}
    If $f:S \to S$ is an Anosov diffeomorphism and $\Per(f)$ is dense in $S$,
    then for any periodic point $x \in S$, the unstable manifold $\Wu(x)$ is
    dense in $S$.
\end{prop}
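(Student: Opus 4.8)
The plan is to reduce to the topologically transitive case via the spectral decomposition, and then, for each periodic point $q$, to produce a point of $\Wu(x)$ whose forward orbit accumulates on $q$; since $\Per(f)$ is dense this forces $\overline{\Wu(x)} = S$. Throughout we may assume $S$ is connected, restricting to the component containing $x$ if necessary.

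First I would observe that density of $\Per(f)$ forces the nonwandering set to be everything: every periodic point is nonwandering, so $\Omega(f) \supseteq \overline{\Per(f)} = S$, and trivially $\Omega(f) \subseteq S$. Since $f$ is Anosov it satisfies Axiom A, so by Smale's spectral decomposition $\Omega(f) = S$ is a finite disjoint union of closed, $f$-invariant, topologically transitive basic sets. A connected space is not a disjoint union of two or more nonempty closed sets, so there is exactly one basic set; that is, $S$ is itself a transitive basic set of $f$.

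Next, let $p \ge 1$ be the period of $x$, so that $f^p(\Wu(x)) = \Wu(x)$ and hence $C := \overline{\Wu(x)}$ is closed and $f^p$-invariant. Fix a periodic point $q$ of some period $m$, and fix $\varepsilon > 0$ below the expansivity constant of $f$. Using transitivity of the basic set $S$, for each $\delta > 0$ there is a finite orbit segment in $S$ beginning $\delta$-close to a point of the orbit of $x$ and ending $\delta$-close to a point of the orbit of $q$; splicing such a segment between the orbit of $x$ traversed in backward time and the orbit of $q$ traversed in forward time yields, for $\delta$ small, a $\delta$-pseudo-orbit. By the Anosov shadowing lemma it is $\varepsilon$-shadowed by a genuine orbit $\{f^n(w)\}_{n \in \Z}$. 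Because the backward orbit of $w$ stays $\varepsilon$-close to the periodic orbit of $x$ and $\varepsilon$ is below the expansivity constant, $w$ lies on $\Wu(x)$; because the forward orbit of $w$ stays eventually $\varepsilon$-close to the periodic orbit of $q$, it converges to that orbit. Taking the length of the splicing segment divisible by $\gcd(p,m)$, one can select $n_k \to \infty$ with $p \mid n_k$ and $f^{n_k}(w) \to q$; since $f^{n_k}(w) \in \Wu(x)$ and $C$ is closed, $q \in C$. As $q$ was arbitrary, $\Per(f) \subseteq C$, so $C = \overline{\Per(f)} = S$, which is the assertion. (Alternatively, once $x$ has a homoclinic point, which is the case $q = x$ above, one may finish by the inclination lemma in place of the arithmetic on phases, using that $\Wu$ and $\Ws$ meet transversally in an Anosov system.)

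I expect the main obstacle to be the shadowing step: one must verify that the spliced sequence really is a $\delta$-pseudo-orbit, which requires careful bookkeeping of the phases of the two periodic blocks together with uniform continuity of $f$ at the single junction where the splicing segment meets the $q$-block, and one must keep the shadowing constant $\varepsilon$ below the expansivity constant so that "the backward orbit of $w$ stays $\varepsilon$-close to the orbit of $x$" genuinely yields $w \in \Wu(x)$ rather than merely that $w$ tracks that orbit. The reduction to the transitive case and the concluding density argument are routine.
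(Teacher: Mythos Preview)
The paper does not supply its own proof of this proposition; it is quoted from Franks as a restatement of (1.7) and (1.8) in \cite{Franks1}, with the added remark that those proofs ``only require $S$ to be connected, not necessarily compact.'' That clause is the whole point here: the proposition is invoked in Lemma~\ref{noad} precisely in the case where $S$ is a plane.

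Your argument is the standard one and is fine for compact $S$, but your reduction to a single transitive basic set tacitly uses compactness. Smale's spectral decomposition into \emph{finitely many} closed basic sets is a theorem about compact hyperbolic sets; on a non-compact $S$ with $\Omega(f)=S$ there is no a priori finiteness, so connectedness alone need not force a unique piece. The shadowing half of your proof is in better shape: with uniform hyperbolicity constants, pseudo-orbit tracing and the identification of $w$ as a point of $\Wu$ of the orbit of $x$ (via a uniform expansivity constant) go through without compactness, so the gap is confined to the transitivity step. To close it without the global decomposition, argue directly from local product structure: for open sets $U,V$ pick periodic points $p\in U$ and $q\in V$; the closure of $\Wu(p)$ is saturated by local stable manifolds in a uniform neighborhood of every periodic point it meets, and a chain of overlapping such neighborhoods (available by density of $\Per(f)$ and connectedness of $S$) carries $\overline{\Wu(p)}$ from $p$ to $q$, so $\Wu(p)$ meets $\Ws(q)$ and some forward iterate of $U$ meets $V$. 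This is essentially Franks's route and uses only connectedness.
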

This is a restatement of (1.7) and (1.8) as given in \cite{Franks1}.  Note that
the proofs only require $S$ to be connected, not necessarily compact.

The work of Plante shows that many codimension one foliations give rise to
holonomy invariant measures.  The following is a combination of (4.1) and
(7.2) as stated in \cite{plante1975foliations}.

\begin{prop}
    [Plante \cite{plante1975foliations}] \label{muexists}
    Let $M$ be a compact manifold such that $\pi_1(M)$ has non-exponential
    growth, and let $\F$ be a codimension one foliation.  If $L$ is a leaf
    which does not intersect any null-homotopic closed transversal, then there
    is a holonomy invariant measure with support equal to the closure of $L$.
\end{prop}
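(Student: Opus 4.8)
The plan is to follow Plante's original strategy, whose core is to convert the two topological hypotheses into a \emph{subexponential volume growth} statement for the leaf $L$, and then to produce the holonomy invariant measure by averaging a transverse counting function over a Følner exhaustion of $L$. \textbf{Step 1 (the leaf $L$ has subexponential growth).} Fix $p \in L$ and a finite atlas of $M$ by foliated charts (flow boxes) of uniformly bounded geometry with a uniform lower bound on plaque size. For $r>0$ let $B_r \subseteq L$ be the intrinsic ball of radius $r$ about $p$ and let $N(r)$ count the plaques of this atlas met by $B_r$; then $\operatorname{vol}(B_r)$ and $N(r)$ are comparable up to constants, so it suffices to bound $N(r)$. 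If two \emph{distinct} plaques of $L$ lie in a common chart $U$ and both meet $B_r$, then joining their centers by a path in $L$ of length at most $2r$ and closing up through a short transverse arc inside $U$ produces a closed loop in $M$ of length at most $2r+C$; were two such loops (coming from different plaque pairs) to represent the same class, or were one to be trivial, one would obtain a null-homotopic closed transversal of $\F$ meeting $L$, contrary to hypothesis. Hence the plaques of $B_r$ in each chart inject into the set of elements of $\pi_1(M)$ represented by loops of length at most $2r+C$, which is controlled by the (non-exponential) growth function of $\pi_1(M)$. Summing over the finitely many charts, $N(r)$, and therefore $\operatorname{vol}(B_r)$, grows subexponentially in $r$.

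\textbf{Step 2 (a Følner exhaustion).} Writing $v(r):=\operatorname{vol}(B_r)$, subexponential growth gives radii $r_n \to \infty$ with $v(r_n+D)/v(r_n)\to 1$ for every fixed $D$: if instead $v(r+1)\ge(1+\varepsilon)v(r)$ for all large $r$, the volume would grow exponentially. Thus the collars $B_{r_n+D}\setminus B_{r_n}$ are negligible relative to $B_{r_n}$, so $\{B_{r_n}\}$ is a Følner sequence exhausting $L$. \textbf{Step 3 (averaging).} For a transversal $\tau$ to $\F$ set
\[
    \mu_n(\tau)=\frac{\#\{\text{plaques of }B_{r_n}\text{ meeting }\tau\}}{v(r_n)}.
\]
Restricted to a fixed finite family of small transversals the $\mu_n$ are uniformly bounded, so a weak-$*$ (diagonal) argument extracts a subsequential limit $\mu$, a locally finite, finitely additive transverse measure. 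Holonomy invariance is the standard amenable-leaf mechanism: transporting $\tau$ by a holonomy map moves the relevant base points by a bounded leaf-distance $D$, hence changes the plaque count only by plaques meeting $B_{r_n+D}\setminus B_{r_n}$, which is $o(v(r_n))$ by Step 2; so $\mu$ is holonomy invariant.

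\textbf{Step 4 (support).} Only plaques of $L$ are ever counted, so $\supp\mu\subseteq\overline{L}$. Conversely $\overline L$ is $\F$-saturated and $L$ is dense in it along transversals; a uniform-density recurrence argument then shows every transversal through a point of $\overline L$ receives positive $\mu$-mass, giving $\supp\mu=\overline{L}$. The main obstacle is Step 1: making rigorous the passage from ``no null-homotopic closed transversal meets $L$'' together with ``$\pi_1(M)$ has non-exponential growth'' to subexponential growth of the leaf — this is where the foliation geometry must be genuinely linked to the ambient topology, and where Novikov-type control on how $L$ can accumulate on itself is used. Everything after that is the classical ``amenable leaf $\Rightarrow$ transverse invariant measure'' construction, with Step 4 a bookkeeping step that again relies on the transversal hypothesis to keep the support from collapsing to a proper sub-lamination.
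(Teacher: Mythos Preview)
The paper does not give its own proof of this proposition: it is quoted as an external input, explicitly described as ``a combination of (4.1) and (7.2) as stated in \cite{plante1975foliations}.'' So there is nothing in the paper to compare your argument against beyond the citation itself.

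That said, your sketch is a faithful outline of Plante's original strategy: (4.1) in his paper is precisely the statement that under the no-null-homotopic-transversal hypothesis and non-exponential growth of $\pi_1(M)$ the leaf $L$ has non-exponential growth (your Step~1), and (7.2) is the averaging construction producing a holonomy invariant measure with support $\overline L$ from such a leaf (your Steps~2--4). You have correctly identified Step~1 as the crux. One point to tighten there: the loop you build from two plaques in a chart is not itself a closed transversal --- it has a long tangential part and a short transverse part --- so the inference ``if it were null-homotopic we would get a null-homotopic closed transversal meeting $L$'' needs an honest argument (in Plante this goes through showing the holonomy cover of $L$ injects into the universal cover of $M$, which is where the transversal hypothesis is really used). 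Step~4 is also looser than it looks: the averaging limit could in principle concentrate on a proper sublamination of $\overline L$, and ruling this out again uses the transversal hypothesis in a nontrivial way, not just ``uniform-density recurrence.''
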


The following is (8.5) from the same paper.

\begin{prop}
    [Plante \cite{plante1975foliations}]
    Let $\F$ be a codimension one foliation of class $C^r$ ($r  \ge  0$) of a
    compact manifold $M$.  If $\mu$ is an $\F$-invariant measure then there is
    a unique decomposition of $\mu$,
    \[
        \mu = \mu_K + \mu_1 + \cdots + \mu_n
    \]
    such that the following hold:
    \begin{enumerate}
        \item $\supp \mu_K$ is a union of compact leaves.
        \item $\supp \mu_i$ is connected and is a union of non-compact leaves,
        $i=1, \cdots, n$.
        \item The sets $\supp \mu_i$, $i = 1, \cdots, n$ are pairwise disjoint.  \end{enumerate}
    Furthermore, if $\F$ is transversely oriented, $n  \le  H_1(M; \R)/2$.
\end{prop}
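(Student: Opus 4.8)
I would pass to the transverse description of $\mu$, separate its support into a compact-leaf part and a non-compact-leaf part, and bound the number of connected pieces of the latter homologically. Fix a complete transversal for $\F$ (a compact $1$-manifold meeting every leaf), so that $\mu$ is realized as a locally finite measure invariant under the holonomy pseudogroup and $\Lambda:=\supp\mu$ is a closed, $\F$-saturated set. Let $C:=\bigcup\{L\subseteq\Lambda:L\text{ compact}\}$ and $N:=\Lambda\setminus C$; both are $\F$-saturated Borel sets. The decomposition will be $\mu_K:=\mu|_C$ together with $\mu_i:=\mu|_{\Lambda_i}$, where $\Lambda_1,\dots,\Lambda_n$ are the connected components of $N$. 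Since ``lies on a compact leaf of $\Lambda$'' and ``lies in a given component of $N$'' are intrinsic to $(\F,\mu)$, uniqueness of such a decomposition is automatic once existence is established; the content is (a) that $C$ and $N$ split the support cleanly, and (b) that $N$ has only finitely many components, with the stated bound.

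For (a) I would show $C$ is relatively clopen in $\Lambda$; equivalently, that no compact leaf of $\Lambda$ is accumulated by non-compact leaves of $\Lambda$. Near a compact leaf $L\subseteq\Lambda$ with finite holonomy this is Reeb's local stability theorem (valid in the $C^0$ category): a saturated neighbourhood of $L$ consists of compact leaves. Near a compact leaf with infinite holonomy, the holonomy is an infinite group of germs of homeomorphisms of a half-line fixing the origin and preserving the finite transverse measure; a summation argument along a holonomy orbit --- the one that forces the invariant measure of a single contracting germ to be a point mass at the fixed point --- shows the transverse measure vanishes near $L$ on any side on which the holonomy moves nearby points, and the remaining highly degenerate configurations are excluded by a finer analysis. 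Granting this, $N$ is closed, $\mu=\mu|_C+\mu|_N$, $\supp\mu_K=C$ is a union of compact leaves, and $\supp\mu_i$ is connected and a union of non-compact leaves; with the finiteness from (b), $\supp\mu_i=\Lambda_i$. I expect this step to be technical but not conceptually hard.

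For (b) --- the main obstacle --- I would use Plante's theory of foliation cycles. Transverse orientability lets $\mu$ and $\F$ define a closed $1$-current (a distributional closed $1$-form); restricting it to a component $\Lambda_i$ of $N$ yields such a current with a class $[\mu_i]\in H^1(M;\R)$. Distinct components carry currents with disjoint support, so $[\mu_i]\smile[\mu_j]=0$ for all $i,j$ (including $i=j$, automatically), i.e.\ the $[\mu_i]$ span a subspace of $H^1(M;\R)$ totally isotropic for the cup product. One then argues that each $[\mu_i]\ne 0$ --- using that a holonomy-invariant measure with vanishing foliation cycle is carried by compact leaves --- and that the $[\mu_i]$ are linearly independent, detecting them by closed transversals through the $\Lambda_i$; this already bounds $n$ by $\dim H^1(M;\R)=\dim H_1(M;\R)$. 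The improvement to $n\le\tfrac12\dim H_1(M;\R)$ is the delicate point. For surfaces it is transparent: the cup product on $H^1(\Sigma_g;\R)$ is the symplectic intersection form, whose totally isotropic subspaces have dimension at most $g=\tfrac12 b_1$. In higher dimensions the cup product is not scalar-valued and isotropy alone does not suffice; the extra leverage must come again from the transverse orientation together with the positivity of $\mu$ --- morally, attaching to each $\Lambda_i$ a companion class paired with $[\mu_i]$ and forcing the resulting $2n$ classes to be independent. Isolating the right companion classes and proving their joint independence is exactly the place where I would follow Plante's proof of (8.5) rather than reconstruct it. Given the bound, $\mu_i:=\mu|_{\Lambda_i}$ over the finitely many components, and uniqueness follows from the intrinsic description above.
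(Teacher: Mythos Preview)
The paper does not prove this proposition: it is quoted verbatim as (8.5) from Plante \cite{plante1975foliations} in the ``Outline and Externalities'' section, with no argument supplied. There is therefore no proof in the paper to compare your proposal against.

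That said, your sketch is a reasonable reconstruction of how such a result is proved, and indeed tracks Plante's original argument in outline: separate the support into compact and non-compact leaves, pass to the foliation cycle in $H^1(M;\R)$, and use positivity plus disjointness of supports to bound the number of non-compact components. You are also candid about the genuine difficulty --- the halving of the bound --- and correctly identify that this is where one must lean on Plante's original proof rather than reinvent it. For the purposes of this paper, however, the proposition is simply imported as a black box, so no proof is expected here.
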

For a foliation $\F$ of $M$, let a \emph{minimal set} signify a closed
non-empty $\F$-saturated subset which contains no proper subset with the same
properties.

\begin{cor} \label{finitesupports}
    If $\F$ is a codimension one foliation without compact leaves on a compact
    manifold $M$, there are at most a finite number of subsets of the form $X
    \subset M$ such that $X$ is a minimal set and $X = \supp \mu$ for some
    holonomy invariant measure $\mu$.
\end{cor}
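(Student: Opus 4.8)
The plan is to deduce the corollary from Plante's decomposition theorem (the preceding proposition) together with two elementary facts about minimal sets. First I would record that a minimal set $X$ is connected: if $X = A \sqcup B$ were a partition into nonempty closed pieces, then, since every leaf of $\F$ is connected and hence contained in $A$ or in $B$, both $A$ and $B$ would be nonempty, proper, closed, $\F$-saturated subsets of $X$, contradicting minimality. The same kind of argument shows that distinct minimal sets $X \neq X'$ are disjoint, since $X \cap X'$ is closed and $\F$-saturated, so if nonempty it equals both $X$ and $X'$ by minimality. (That minimal sets exist inside any nonempty closed saturated subset of $M$ follows from Zorn's lemma and compactness.)

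Now suppose $X_1,\dots,X_N$ are distinct minimal sets with $X_j = \supp\mu_j$ for holonomy invariant measures $\mu_j$; the goal is to bound $N$. Since the $X_j$ are pairwise disjoint, $\mu := \mu_1 + \dots + \mu_N$ is a holonomy invariant measure with $\supp\mu = X_1 \sqcup \dots \sqcup X_N$, and, being a finite disjoint union of nonempty closed connected sets, these $X_j$ are exactly the connected components of $\supp\mu$. Applying Plante's decomposition to $\mu$, write $\mu = \mu_K + \nu_1 + \dots + \nu_n$. Since $\F$ has no compact leaves, $\supp\mu_K$ is empty, so $\mu_K = 0$ and $\supp\mu = \supp\nu_1 \sqcup \dots \sqcup \supp\nu_n$ is a second presentation of $\supp\mu$ as a finite disjoint union of nonempty closed connected sets; hence the $\supp\nu_i$ are also the connected components of $\supp\mu$, and therefore $N = n$. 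When $\F$ is transversely oriented, Plante's proposition bounds $n$ by $H_1(M;\R)/2$, so $N \le H_1(M;\R)/2$. As this holds for every finite subcollection of minimal sets of the stated form, there are at most $H_1(M;\R)/2$ of them in total.

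To drop the transverse orientability hypothesis I would pass to the orientation double cover $\pi : \widetilde M \to M$, on which $\pi^*\F$ is transversely oriented; $\widetilde M$ is still compact, and $\pi^*\F$ still has no compact leaves (one would project to a compact leaf of $\F$). If $X = \supp\mu$ is a minimal set as above, then $\pi^{-1}(X) = \supp(\pi^*\mu)$, and I claim it is a union of at most two minimal sets of $\pi^*\F$, each carrying a holonomy invariant measure. Indeed, choose any minimal $Z \subseteq \pi^{-1}(X)$ and let $\tau$ be the nontrivial deck transformation; then $Z \cup \tau Z$ is $\tau$-invariant, closed, and $\F$-saturated, its image under $\pi$ is a nonempty closed saturated subset of $X$ and hence all of $X$, so $Z \cup \tau Z = \pi^{-1}(X)$. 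Either $Z = \tau Z = \pi^{-1}(X)$ is itself minimal, or $Z$ and $\tau Z$ are disjoint and clopen in $\supp(\pi^*\mu)$, in which case the restrictions of $\pi^*\mu$ to $Z$ and to $\tau Z$ are holonomy invariant with supports $Z$ and $\tau Z$. Thus the number of minimal sets below supporting a holonomy invariant measure is at most twice the number above, which is finite by the previous paragraph.

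Nothing here is deep once Plante's decomposition is available; the only step needing real care is this last one, the reduction to the transversely oriented case, where one must check that passing to the double cover preserves both minimality and the existence of the transverse invariant measure. That is the part I would expect to be the main obstacle.
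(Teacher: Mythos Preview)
Your argument is correct, and the paper gives no proof of this corollary at all---it is stated as an immediate consequence of Plante's decomposition and left to the reader. Your route (sum the $\mu_j$, apply the decomposition, and match connected components to get $N=n$) is exactly the intended one in the transversely oriented case.

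Your double-cover reduction for the non--transversely-oriented case is a genuine extra step that the paper does not mention, and it is carried out correctly. It is worth noting, though, that in the paper's only use of this corollary (the proof that $\Wus$ is minimal), the foliation is transversely oriented: the authors have already passed to a finite cover where $E^c$ is orientable, so $\Wus$ inherits a transverse orientation. Thus this last reduction, while needed for the corollary as stated, is not needed for the paper's purposes.
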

Further, the measures supported on minimal sets are unique up to proportion, as
demonstrated in the book of Hector and Hirsch
(see Chapter X Theorem 2.3.3 of \cite{hector-hirsch-partb}).

\begin{thm}
    [Hector-Hirsch \cite{hector-hirsch-partb}] \label{hhmeasure}
    Let $\F$ be a codimension one foliation.  Let $\mu$ be $F$-invariant with
    support a minimal set of $\F$ which is not a compact leaf.  If $\mu'$ is
    another $\F$-invariant measure with equal support, there is $c \in \R$ such
    that $\mu' = c \mu$.
\end{thm}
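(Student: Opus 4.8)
To prove Theorem~\ref{hhmeasure}, the plan is to recast the statement as the unique ergodicity of the holonomy pseudogroup on the minimal set and then exploit that the transverse direction is one-dimensional.

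\emph{Reduction to a transversal.} Choose a finite complete transversal $T$, a compact $1$-manifold embedded transversally to $\F$ meeting every leaf (passing to the orientation double cover of the transverse line bundle if necessary). Then an $\F$-invariant measure is exactly a finite measure on $T$ invariant under the holonomy pseudogroup $\Gamma$ generated by the transition maps of a foliated atlas. Since $\supp\mu = X$ is a minimal set, its trace $Y := X \cap T$ is $\Gamma$-minimal: every $\Gamma$-orbit in $Y$ is dense, $Y$ has no isolated points, and hence $Y$ is either all of $T$ or locally a Cantor set. Both $\mu$ and $\mu'$ give full mass to $Y$ and none to $T \setminus Y$, and neither has atoms --- an atom of $\mu$ would, by minimality and finiteness of the total mass, force $Y$ to be a finite orbit, i.e.\ $X$ a compact leaf. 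By the ergodic decomposition of transverse invariant measures, together with minimality of $Y$ (the support of any invariant measure is a closed invariant set, hence all of $Y$, so every ergodic component again has support $Y$), it suffices to treat the case where $\mu$ and $\mu'$ are ergodic; the general proportionality constant is recovered by integrating over components. Normalize $\mu(T) = \mu'(T) = 1$; the goal is $\mu = \mu'$.

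\emph{Flat coordinates and a comparison homeomorphism.} Integrating $\mu$ sends each component of $T$ monotonically onto an interval so that $\mu$ becomes Lebesgue measure restricted to the image of $Y$; since $\mu$ is atomless and collapses exactly the gaps of $Y$, the induced map $F_\mu\colon Y \to [0,1]$ is a continuous monotone surjection, injective apart from identifying the two endpoints of each gap. Doing the same with $\mu'$ gives $F_{\mu'}\colon Y \to [0,1]$, and because $\mu$ and $\mu'$ have the \emph{same} support they collapse the same gaps, so $g := F_{\mu'} \circ F_\mu^{-1}$ is a well-defined homeomorphism of $[0,1]$. In these flat coordinates every holonomy transformation acts, on each plaque of its domain, by a translation, and $g$ intertwines the $\Gamma$-action in $\mu$-coordinates with the $\Gamma$-action in $\mu'$-coordinates. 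Proving $\mu = \mu'$ is equivalent to proving that $g$ is affine, i.e.\ $g = \id$.

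\emph{Affineness via minimality --- the main obstacle.} If the elements of $\Gamma$ were genuine translations of $\R$, then $g$ would conjugate one minimal translation action with dense orbit to another, and the relation $g(x + \rho(\gamma)) = g(x) + \rho'(\gamma)$, combined with density of the displacements $\rho(\gamma)$ and monotonicity of $g$, would force $g$ to be affine. The real difficulty, and the step I expect to be the crux, is that a holonomy map is only \emph{locally} a translation, with displacements that may vary from plaque to plaque, so no classification of minimal actions on the line applies directly. The substance of the Hector--Hirsch argument is to replace this heuristic with a \emph{uniform tiling (bounded distortion) estimate}, established from minimality of $Y$ and compactness of $M$ alone and valid at mere $C^0$ regularity: for any two transverse arcs $J_1, J_2$ with endpoints in $Y$ and any sufficiently long orbit segment of a point of $Y$, the numbers of holonomic translates of $J_1$ and of $J_2$ that the segment meets are comparable, with ratio tending to $\mu(J_1)/\mu(J_2)$ uniformly in the segment --- equivalently, ergodic averages of characteristic functions of transverse arcs converge uniformly on $Y$. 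Applying this estimate simultaneously along one orbit to $\mu$ and to $\mu'$ gives $\mu'(J) = c\,\mu(J)$ for every such arc, with $c = \mu'(T)/\mu(T)$, whence $g$ is affine and $\mu' = c\mu$. Obtaining the tiling estimate without any smoothness is the delicate combinatorial heart of the matter; once it is available, the theorem follows immediately.
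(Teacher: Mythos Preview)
The paper does not contain a proof of this theorem: it is quoted as an external result from Hector and Hirsch's book (Chapter~X, Theorem~2.3.3 of \cite{hector-hirsch-partb}), so there is no in-paper argument to compare your proposal against.

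As to the proposal itself, your reduction to a complete transversal and the reformulation as unique ergodicity of the holonomy pseudogroup on the minimal trace $Y$ are standard and correct, and the flat-coordinate comparison map $g$ is a nice device. However, you explicitly leave the essential step unproved: the ``uniform tiling / bounded distortion estimate'' that forces ergodic averages of transverse arcs to converge uniformly on $Y$ is precisely the content of the theorem, and you only assert its existence rather than establish it. In the Hector--Hirsch treatment this step is not a soft consequence of minimality and compactness alone; it uses the one-dimensionality of the transverse direction in an essential combinatorial way (via the ordering of $Y$ inside the transversal and a Denjoy-type counting argument for returns). Without that argument actually written out, the proposal is an outline with the decisive lemma missing. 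A minor additional point: your appeal to the ergodic decomposition is fine in spirit, but note that what you really need is that \emph{every} ergodic invariant measure with support contained in $Y$ already has support equal to $Y$ (this follows from minimality), so that proving proportionality for ergodic pairs yields a single ergodic class and hence proportionality for all invariant measures supported on $Y$.
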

Once we establish expansiveness, the final step is to invoke the following
result of Vieitez.

\begin{thm}
    [Vieitez \cite{vieitez2002lyapunov}] \label{vieitez}
    Let $M$ be a three-dimensional compact connected oriented manifold and
    $f : M \to M$ an expansive diffeomorphism.  If $NW(f) = M$ then $f$ is
    conjugate to a linear Anosov diffeomorphism and $M = \T^3$.
\end{thm}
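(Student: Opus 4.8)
\section*{Proof proposal for Theorem~\ref{vieitez}}

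The plan is to reconstruct, from expansiveness alone, the canonical stable and unstable foliations of $f$, to invoke the hypothesis $NW(f)=M$ precisely in order to rule out the pathologies that expansive homeomorphisms can otherwise exhibit in dimension three, and thereby to recognize $f$ as a \emph{topologically Anosov} diffeomorphism. Once that is in hand, $M$ is pinned down as $\T^3$ and $f$ is conjugated to a hyperbolic linear automorphism by the now-classical recipe for topologically Anosov systems.

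Fix an expansivity constant $c$ and, for $0<\epsilon<c$, set $\Ws_\epsilon(x)=\{\,y:\dist(f^n x,f^n y)\le\epsilon\ \text{for all}\ n\ge0\,\}$ and $\Wu_\epsilon(x)$ symmetrically over $n\le0$; expansiveness gives immediately $\Ws_\epsilon(x)\cap\Wu_\epsilon(x)=\{x\}$. The crux is to show these local sets are topological balls that locally coordinatize $M$. Following Lewowicz, I would build a continuous \emph{Lyapunov function} $V$ on a neighbourhood of the diagonal of $M\times M$, vanishing there, which under $f\times f$ strictly decreases along $\Ws$-directions and strictly increases along $\Wu$-directions; the Lewowicz--Hiraide machinery then shows that, where it applies, $\Ws_\epsilon(x)$ and $\Wu_\epsilon(x)$ are balls of dimensions $d^s(x)$ and $d^u(x)$ with $d^s(x)+d^u(x)=3$, and that $[x,y]:=\Ws_\epsilon(x)\cap\Wu_\epsilon(y)$ is a single point depending continuously on $(x,y)$, i.e.\ $f$ has a \emph{local product structure} near $x$. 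Since a positively expansive homeomorphism of an infinite compact metric space cannot exist, neither $d^s$ nor $d^u$ can be $0$ (the first would make $f$ positively expansive, the second would do so for $f\inv$), so $\{d^s(x),d^u(x)\}=\{1,2\}$; upper semicontinuity of $x\mapsto\Ws_\epsilon(x)$ makes $d^s,d^u$ locally constant, hence constant on the connected $M$. Replacing $f$ by $f\inv$ if necessary, assume $d^s\equiv2$ and $d^u\equiv1$.

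The main obstacle is that the Lewowicz machinery does not apply unconditionally in dimension three --- for a general expansive homeomorphism a local stable set can be wild, e.g.\ totally disconnected --- and this is exactly where $NW(f)=M$ is indispensable. The mechanism I would use is a recurrence argument: the ``bad'' set on which the product structure above would degenerate is closed and $f$-invariant, and the uniform contraction/expansion supplied by $V$ forces a nonempty bad set to contain a wandering point, contradicting $NW(f)=M$. Making this dichotomy rigorous, and genuinely using the nonwandering hypothesis to eliminate the degenerate case, is the delicate heart of Vieitez's argument and is not something I see how to shortcut. Granting it, the charts $[x,y]$ are defined and continuous globally; the global stable sets $\Ws(x)=\bigcup_{n\ge0}f^{-n}\bigl(\Ws_\epsilon(f^n x)\bigr)$ are monotone unions of open $2$-cells, hence homeomorphic to $\R^2$ by Brown's theorem, and likewise $\Wu(x)\cong\R$. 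These assemble into two transverse continuous $f$-invariant foliations $\Ws$ and $\Wu$, with $f$ topologically contracting the leaves of $\Ws$ and expanding those of $\Wu$; the orientability hypothesis makes both foliations coorientable, so $f$ is topologically Anosov with codimension-one stable foliation.

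It remains to identify $M$ and $f$. Since the leaves of $\Ws$ are planes, $\Ws$ is Reebless: a Reeb component would contain a compact leaf or a null-homotopic closed transversal, incompatible with the transverse contraction coming from $f$ and $NW(f)=M$ (compare Proposition~\ref{reebless}). By Palmeira's theorem the universal cover of $M$ is $\R^3$ with $\Ws$ lifting to the foliation by horizontal planes; together with the transverse one-dimensional $\Wu$ and the product structure this forces $\pi_1(M)\cong\Z^3$, so $M=\T^3$, and $f$ is homotopic to a hyperbolic linear automorphism $A$ with two-dimensional stable and one-dimensional unstable eigenspace --- the topological analogue of Newhouse's classification of codimension-one Anosov diffeomorphisms. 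Finally, lifting to $\R^3$, the global product structure of $\widetilde f$ together with the fact that $\widetilde f$ stays a bounded distance from $A$ lets one build a proper surjection $h$ with $h\widetilde f=A h$ by a shadowing-type construction; expansiveness makes $h$ injective, and it descends to the desired topological conjugacy on $\T^3$. (In the paper's application $f$ is partially hyperbolic, so $\Ws$, $\Wu$ and their product structure are available from the outset, and the hardest step above is bypassed.)
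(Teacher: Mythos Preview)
The paper does not prove this theorem. It is listed in the ``Outline and Externalities'' section as a cited result of Vieitez \cite{vieitez2002lyapunov} and is invoked as a black box at the end of Section~\ref{mainproof}, once expansiveness of $f$ has been established. There is therefore no proof in the paper to compare your proposal against.

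That said, your sketch is a faithful outline of the strategy in Vieitez's actual paper: the Lewowicz Lyapunov-function construction, the dimension count $d^s+d^u=3$ for local stable and unstable sets, the use of $NW(f)=M$ to exclude wild local stable sets, the assembly into a topologically Anosov pair of foliations, and the identification of $M=\T^3$ together with the shadowing-type conjugacy. You are right to flag the step where $NW(f)=M$ is used to rule out degeneracy of the local product structure as the genuine difficulty; this is indeed the technical core of \cite{vieitez2002lyapunov}, and your proposal does not fill it in --- it is an outline with an honestly acknowledged gap rather than a complete proof.

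Your closing parenthetical is well taken and worth emphasizing: in the paper's setting $f$ is already partially hyperbolic, so the smooth foliations $\Wu$, $\Ws$, $\Wc$ and a local product structure are available from the outset, and one could in principle bypass Vieitez's theorem entirely. This is precisely what the paper does in Section~\ref{highdim} for higher-dimensional tori, where no analogue of Vieitez's result is available: there the conjugacy is built directly from the Franks semiconjugacy and the global product structure, with injectivity coming from the holonomy-invariant measure on the center. The same route would work on $\T^3$, so the appeal to Theorem~\ref{vieitez} in Section~\ref{mainproof} is a convenience rather than a necessity.
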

\begin{remark}
    Here, $NW(f) = M$ is an assumption equivalent to saying that for every
    non-empty open subset $U \subset M$, there is $k  \ne  0$ such that $U$
    intersects $f^k(U)$.  This holds for all conservative diffeomorphisms by
    Poincar\'e recurrence.
    While we assume throughout the next section that $f$ is conservative, we
    only need this assumption to use that $NW(f) = M$ and to apply Theorem
    \ref{accerg}.
    In fact, Theorem \ref{nonacc} and Proposition \ref{subsetdynamics} hold in
    the non-conservative case, so long as $NW(f) = M$.  Therefore, Theorem
    \ref{mainthm} also holds in the non-conservative case if the condition
    ``not ergodic'' is replaced by ``not accessible and $NW(f) = \bbT^3$.''
\end{remark}
\section{The proof} \label{mainproof} 
In this section, assume $f:\T^3 \to \T^3$ is a conservative, non-accessible,
partially hyperbolic diffeomorphism homotopic to Anosov.  In particular, the
action of $f$ on $\pi_1(\T^3) \cong \Z^3$ is as a hyperbolic linear map.
By lifting to a finite cover, assume that the bundles $E^u$, $E^c$, and $E^s$
are orientable.  The original map is expansive if and only if its lift to this
finite cover is.

As $f$ is homotopic to Anosov, it forbids certain invariant subsurfaces.

\begin{lemma} \label{noad}
    There is no injectively immersed surface $i:S \to M$ such that
    $i_*:\pi_1(S) \to \pi_1(M)$ is injective and $S$ has Anosov dynamics with
    dense periodic points.
\end{lemma}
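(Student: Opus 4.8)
The plan is to derive a contradiction from the topological and dynamical restrictions imposed by $f$ being homotopic to Anosov. Suppose such a surface $i:S\to M$ exists. Since $i_*:\pi_1(S)\to\pi_1(M)$ is injective and $S$ carries Anosov dynamics, $S$ must be a torus: the only compact surface admitting an Anosov diffeomorphism is $\T^2$, and its $\pi_1$ is $\Z^2$ (if $S$ is noncompact one can still extract a $\Z^2$ in $\pi_1(S)$ by looking at the fundamental group of an Anosov surface, or argue that the relevant boundary-leaf situation forces compactness). So $\pi_1(M)\cong\Z^3$ contains a subgroup $G\cong\Z^2$ which is invariant (up to finite index / under a power) under $f^k_*$, because $f^k(S)=S$ forces $f^k_*$ to preserve $G=i_*\pi_1(S)$.

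Next I would compare this with the action of $f_*$ on $\pi_1(\T^3)\cong\Z^3$, which by hypothesis is a hyperbolic integer matrix $A$. A hyperbolic $3\times 3$ integer matrix has no eigenvalue on the unit circle, and — crucially — no invariant rational $2$-plane can be "isometric-like"; more to the point, $f^k|_S$ being Anosov means the induced action of $f^k_*$ on $G\cong\Z^2$ is itself a hyperbolic $2\times 2$ integer matrix (an Anosov diffeomorphism of $\T^2$ acts hyperbolically on $H_1$). But then $A^k$, restricted to the rational plane spanned by $G$, is hyperbolic with two eigenvalues off the unit circle, and the third eigenvalue of $A^k$ (acting on the quotient $\Z^3/G$) is a nonzero integer $\pm 1$ by the determinant condition, hence lies on the unit circle — contradicting hyperbolicity of $A$. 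This is essentially the group-theoretic dichotomy of Proposition \ref{zz}: an $f_*$-invariant $\Z^2\subset\pi_1(\T^3)$ is incompatible with $f_*$ hyperbolic.

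The one subtlety, and the step I expect to be the main obstacle, is ruling out the \emph{noncompact} case — i.e. showing that an injectively immersed $S$ with Anosov dynamics and dense periodic points which is homotopically nontrivial in the strong sense that $i_*$ is injective cannot be a genuinely noncompact surface (an infinite-genus surface or a plane, say) sitting inside $\T^3$. Here I would use that $S$ tangent to $\Eu\oplus\Es$ arises as a boundary leaf of a lamination (the setting of Propositions \ref{subsetdynamics} and \ref{udense}): density of periodic points plus Proposition \ref{udense} forces unstable leaves to be dense in $S$, and combined with the fact that $S$ is a leaf of a $C^0$ lamination in a compact $3$-manifold, one shows the closure is compact, hence $S$ itself (being the boundary leaf, which is closed) is a compact torus. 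Alternatively, invoking Proposition \ref{reebless} applied to the foliation extending the lamination, $i_*$ injective plus the leaf structure pins down the homotopy type. Once compactness is in hand the argument is the clean linear-algebra contradiction above, so the real work is the topology of the leaf.
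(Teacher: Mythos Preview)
Your group-theoretic argument for the compact case is fine and matches the paper's: an $f^k_*$-invariant rank-two subgroup of $\Z^3$ forces a quotient eigenvalue $\pm 1$, contradicting hyperbolicity. The paper phrases this as ``the only $f^k_*$-invariant subgroups of $\Z^3$ are trivial or full rank,'' which is the same observation.

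The genuine gap is exactly where you flagged it: the noncompact case. Both of your proposed fixes fail. First, you cannot ``extract a $\Z^2$ in $\pi_1(S)$'' when $S$ is noncompact --- the relevant case is $S$ a plane, with trivial $\pi_1$, and there is no $\Z^2$ to find. Second, the argument that ``the closure is compact, hence $S$ itself is compact'' conflates the extrinsic closure in $\T^3$ (always compact) with the intrinsic topology of the leaf; boundary leaves of laminations in compact $3$-manifolds are routinely noncompact planes or cylinders. Proposition~\ref{reebless} only gives $\pi_1$-injectivity, which you already have, and says nothing about compactness.

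The paper goes in the opposite direction from what you attempt: rather than forcing $S$ to be compact, it uses the group theory to conclude $\pi_1(S)$ is trivial, so $S$ is a plane (the sphere being excluded since it admits no Anosov map). The contradiction then comes from dynamics on the plane: by Proposition~\ref{udense} some unstable leaf is dense in $S$, but on $\R^2$ a one-dimensional unstable foliation whose leaf comes arbitrarily close to itself lets one build a Jordan curve transverse to $E^u$ with the orientation pointing consistently inward or outward --- a trapping region --- which is incompatible with density. This planar Poincar\'e--Bendixson style argument is the missing idea in your proposal.
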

\begin{proof}
    Suppose there is such a leaf $S$.  As $f^k(S)=S$, the image of $\pi_1(S)$
    in $\pi_1(\T^3)$ is an $f^k_*$-invariant subgroup.  As $f^k_*$ is
    hyperbolic, the only possibilities for such an invariant subgroup are the
    trivial group or a full rank subgroup.  As no surface has fundamental
    group isomorphic to $\Z^3$, $S$ must be simply connected.  As the 2-sphere
    does not permit Anosov dynamics, $S$ must be a plane.

    By Proposition \ref{udense}, there is a dense unstable leaf through $S$.
    In the case of a plane, however, if an unstable manifold passes near
    itself, then by connecting the two nearby
    segments of the unstable manifold, one can construct a trapping region, a
    Jordan curve transverse to $\Eu$ such that the orientation of $\Eu$ points
    either entirely in or out of the bound region.  This contradicts the fact
    that the leaf is dense.
\end{proof}
\begin{prop} \label{usint}
    There is a unique $f$-invariant foliation tangent to $\Eu \oplus \Es$ without
    compact leaves.
\end{prop}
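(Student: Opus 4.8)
The plan is to obtain the foliation from Theorem \ref{nonacc} by eliminating its first two alternatives, to rule out compact leaves, and then to prove uniqueness directly from the unique integrability of $\Eu$ and $\Es$. For existence, I would invoke Theorem \ref{nonacc} (legitimate since $f$ is conservative, $\T^3$ is orientable, and the bundles have been arranged orientable) and discard cases (1) and (2) using Lemma \ref{noad}. In case (1) a periodic incompressible torus $S$ tangent to $\Eu \oplus \Es$ carries a hyperbolic splitting invariant under a power of $f$, so that power restricts to $S$ as an Anosov diffeomorphism of $\T^2$; by Franks' classification it is conjugate to a linear automorphism, hence has dense periodic points, and $S$ is forbidden by Lemma \ref{noad}. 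In case (2) the lamination $\Gamma(f)$ extends to a foliation of $\T^3$ without compact leaves; since a Reeb component contains a compact toral leaf, this foliation is Reebless, and Proposition \ref{reebless} then shows each of its leaves, in particular each boundary leaf of $\Gamma(f)$, injects its fundamental group into $\pi_1(\T^3)$. As these boundary leaves have Anosov dynamics with dense periodic points, Lemma \ref{noad} is again violated. Hence alternative (3) holds: there is a Reebless $f$-invariant foliation $\F$ tangent to $\Eu \oplus \Es$.

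Next I would check that $\F$ has no compact leaves. A compact leaf $T$ is a closed orientable surface (orientable since $\Eu$ and $\Es$ are) admitting a continuous line field, hence a $2$-torus, and it is incompressible by Proposition \ref{reebless}. If $f(T) = T$, then $T$ is an $f$-invariant $2$-torus with Anosov dynamics, hence with dense periodic points, contradicting Lemma \ref{noad}. Otherwise $T$ and $f(T)$ are disjoint incompressible tori in $\T^3$, and standard three-manifold topology (cutting $\T^3$ along $T$, which is isotopic to a linear subtorus, yields $\T^2 \times [0,1]$, inside which $f(T)$ is isotopic to a horizontal torus) gives $f_*[T] = \pm[T]$ in $H_2(\T^3;\R)$, so $f_*^2$ fixes $[T]$. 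This contradicts the hyperbolicity of the action of $f$ on $H_2(\T^3;\R)$, whose eigenvalues are the pairwise products $\lambda_i \lambda_j$ of eigenvalues of the linear model $A$ and satisfy $|\lambda_i \lambda_j| = |\lambda_k|^{-1} \ne 1$. Therefore $\F$ has no compact leaves.

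For uniqueness I would prove the stronger fact that every foliation $\mathcal G$ tangent to $\Eu \oplus \Es$ satisfies: the leaf $\mathcal G(x)$ through any point $x$ equals the accessibility class of $x$, which makes no reference to $\mathcal G$. Indeed, $\Eu$ and $\Es$ lie in $T\mathcal G(x)$ along the leaf, so unique integrability of $\Eu$ and $\Es$ forces $\Wu(y), \Ws(y) \subseteq \mathcal G(x)$ for every $y \in \mathcal G(x)$; inductively, every $us$-path issuing from $x$ remains in $\mathcal G(x)$, so the accessibility class of $x$ is contained in $\mathcal G(x)$. Conversely, inside the surface $\mathcal G(x)$ the restrictions of $\Wu$ and $\Ws$ are transverse one-dimensional foliations, so the set of points reachable from $x$ by $us$-paths within $\mathcal G(x)$ is open with open complement, hence all of the connected leaf $\mathcal G(x)$; thus $\mathcal G(x)$ is contained in the accessibility class of $x$. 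It follows that $\mathcal G(x)$ equals the accessibility class of $x$ for all $x$, so $\mathcal G = \F$. (In particular $\F$ is automatically $f$-invariant, since $f(\F)$ is again tangent to $\Eu \oplus \Es$.)

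I expect the no-compact-leaves step to be the main obstacle, since it is the only part needing genuine three-manifold topology: one must identify the homology class of a hypothetical compact leaf precisely enough to contradict the hyperbolicity of $f$ on $H_2(\T^3;\R)$. By comparison, the elimination of cases (1) and (2) and the uniqueness argument are bookkeeping once Lemma \ref{noad} and Proposition \ref{reebless} are in hand.
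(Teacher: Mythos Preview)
Your proof is correct and follows the same overall architecture as the paper: invoke Theorem~\ref{nonacc}, eliminate alternatives (1) and (2), then rule out compact leaves in the resulting foliation, then deduce uniqueness from unique integrability of $\Wu$ and $\Ws$.

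The execution differs in two places. For case~(1) the paper gives a one-line $\pi_1$ argument: an incompressible periodic torus would yield an $f^k_*$-invariant rank-two subgroup of $\pi_1(\T^3)\cong\Z^3$, which cannot exist since $f_*$ is hyperbolic. Your route through Anosov dynamics, Franks' conjugacy, and Lemma~\ref{noad} is valid but circuitous---the proof of Lemma~\ref{noad} itself begins with exactly that $\pi_1$ observation. For the no-compact-leaves step, the paper again works in $\pi_1$: since $f_*\pi_1(S)\ne\pi_1(S)$, there is a loop $\gamma\subset f(S)$ with $[\gamma]\in\pi_1(\T^3)\setminus\pi_1(S)$; an intersection-number count forces $\gamma$ to meet $S$, yet $f(S)$ is a leaf of the same foliation and hence either equals or is disjoint from $S$, giving the contradiction in one stroke with no case split. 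Your $H_2$ argument (disjoint incompressible tori in $\T^3$ are parallel, so $f_*[T]=\pm[T]$, contradicting hyperbolicity of $f_*$ on $H_2$) is correct but leans on heavier three-manifold facts (isotopy of incompressible tori to linear ones, classification of incompressible tori in $\T^2\times I$). Your uniqueness paragraph is a careful unpacking of what the paper compresses into the single phrase ``as $\Wu$ and $\Ws$ are uniquely integrable, the foliation is unique.''
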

\begin{proof}
    We first rule out two of the cases of Theorem \ref{nonacc}.
    Suppose there is an $f$-periodic incompressible torus $S$ tangent to $\Eu
    \oplus \Es$.  Then, $\pi_1(S) \cong \Z^2$ injects into $\pi_1(\T^3)$
    and the image is invariant under $f^k_*$ for some $k$.  As $f^k_*$ is
    hyperbolic, no such rank two subgroup exists, giving a contradiction.

    The second case of Theorem \ref{nonacc} implies a foliation without compact
    leaves, some of which have Anosov dynamics with dense periodic points.  As
    the foliation is Reebless, the inclusion of any leaf $\Lf \subset \T^3$ is
    $\pi_1$-injective and Lemma \ref{noad} gives a contradiction.

    Thus, only the third case of Theorem \ref{nonacc} is possible, and there is
    a Reebless foliation tangent to $\Eu \oplus \Es$.  As $\Wu$ and $\Ws$ are
    uniquely integrable, the foliation is unique.

    Suppose this foliation has a compact leaf $S$ which, as the leaf is
    foliated by $\Wu$ and $\Ws$, must be a 2-torus.  Further, as the foliation
    is Reebless, $\pi_1(S)$ injects into $\pi_1(\T^3)$.  As $f_* \pi_1(S)  \ne 
    \pi_1(S)$, we can find a closed loop $\gamma \subset f(S)$ with homotopy
    class $[\gamma] \in \pi_1(\T^3) \setminus \pi_1(S)$.  By intersection number
    arguments, $\gamma$ must intersect $S$, but by the unique integrability, we
    have that $\gamma$ is contained in $S$, a contradiction.
\end{proof}
Let $\Wus$ denote the foliation tangent to $\Eu \oplus \Es$.

\begin{lemma} \label{minsupport}
    For every minimal set $\Lambda$ of $\Wus$, there is a holonomy invariant
    measure with support equal to $\Lambda$.
\end{lemma}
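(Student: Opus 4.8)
The plan is to apply Plante's existence result (Proposition \ref{muexists}) directly, for which we must verify its two hypotheses: that $\pi_1(\T^3)$ has non-exponential growth, and that $\Lambda$ contains a leaf meeting no null-homotopic closed transversal. The first is immediate since $\pi_1(\T^3) \cong \Z^3$ has polynomial growth. For the second, recall from Proposition \ref{usint} that $\Wus$ is Reebless, so by Proposition \ref{reebless} there is no closed null-homotopic curve transverse to $\Wus$ at all; in particular no leaf of $\Wus$ — and hence no leaf contained in $\Lambda$ — meets such a transversal. Picking any leaf $L \subset \Lambda$, Proposition \ref{muexists} then produces a holonomy invariant measure $\mu$ with $\supp \mu = \overline{L}$.

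It remains to arrange that the support is exactly $\Lambda$ rather than merely the closure of one leaf. Here I would use minimality of $\Lambda$: it is a closed $\Wus$-saturated set, so $\overline{L} \subset \Lambda$ for any leaf $L \subset \Lambda$, and since $\overline{L}$ is itself closed and $\Wus$-saturated and nonempty, minimality forces $\overline{L} = \Lambda$. Thus $\supp \mu = \Lambda$, completing the argument.

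I expect the only subtlety to be purely bookkeeping — checking that Plante's proposition is being invoked under precisely the stated hypotheses and that ``holonomy invariant measure'' there matches the notion used subsequently in the paper (transverse invariant measure for the foliation). There is no real obstacle: the Reebless property established in Proposition \ref{usint} does all the work of ruling out null-homotopic transversals, and minimality upgrades ``closure of a leaf'' to the full minimal set. One should note in passing that such a minimal set $\Lambda$ exists by a standard Zorn's lemma argument on the closed $\Wus$-saturated subsets of the compact manifold $\T^3$, so the statement is non-vacuous.
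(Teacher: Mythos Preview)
Your proposal is correct and follows essentially the same route as the paper: verify $\pi_1(\T^3)\cong\Z^3$ has non-exponential growth, use that $\Wus$ is Reebless (Proposition~\ref{usint}) together with Proposition~\ref{reebless} to rule out null-homotopic transversals, apply Plante's Proposition~\ref{muexists} to a leaf $L\subset\Lambda$, and observe $\overline{L}=\Lambda$ by minimality. The paper's proof is terser but identical in content; your explicit justification of $\overline{L}=\Lambda$ and the remark on non-vacuity are harmless elaborations.
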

\begin{proof}
    The fundamental group $\pi(\T^3) \cong \Z^3$ has non-exponential growth, and
    as $\Wus$ is Reebless, there is no null-homotopic closed transversal by
    Proposition \ref{reebless}.
    Therefore, if $L$ is a leaf in a minimal set $\Lambda$, by Proposition
    \ref{muexists}, there is a holonomy invariant measure with support equal to
    $\overline L = \Lambda$.
\end{proof}
\begin{prop} \label{usmin}
    The foliation $\Wus$ is minimal; that is, $\T^3$ is the only minimal set.
\end{prop}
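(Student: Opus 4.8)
The plan is to argue by contradiction: suppose $\Wus$ has a minimal set $\Lambda \ne \T^3$. Since $\Wus$ has no compact leaves (Proposition \ref{usint}), $\Lambda$ is not a compact leaf, so by Lemma \ref{minsupport} and Theorem \ref{hhmeasure} there is a holonomy invariant measure $\mu$ supported on $\Lambda$, unique up to scaling. First I would exploit the $f$-invariance of the foliation: $f$ permutes the minimal sets of $\Wus$, and by Corollary \ref{finitesupports} there are only finitely many, so some power $f^k$ fixes $\Lambda$. Then $(f^k)^*\mu$ is again a holonomy invariant measure supported on $\Lambda$, hence $(f^k)^*\mu = \lambda \mu$ for some $\lambda > 0$. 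The key point is to pin down $\lambda$. A holonomy invariant measure for a codimension-one foliation of $\T^3$ determines (via integration against transversals, i.e. the Ruelle--Sullivan construction) a class in $H^1(\T^3;\R)$, and this assignment is natural with respect to $f$. Because $f$ acts hyperbolically on $H^1(\T^3;\R) \cong \R^3$ — it has no eigenvalue of modulus one — the only invariant \emph{line} on which the scaling factor $\lambda$ could be realized forces either $\lambda \ne 1$, or the cohomology class to vanish.

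Next I would split into these two cases. If the Ruelle--Sullivan class $[\mu] \in H^1(\T^3;\R)$ is nonzero, then $\lambda \ne 1$ since $f^k$ has no eigenvalue of modulus $1$ on $H^1$; say $\lambda < 1$ (replace $f$ by $f^{-1}$ otherwise). Take any closed curve $\gamma$ transverse to $\Wus$ that meets $\Lambda$ — such a $\gamma$ exists because $\Lambda$ is a proper minimal set, so it has nonempty "gaps" and its transverse holonomy structure is that of an exceptional minimal set, giving positive transverse measure along a suitable loop. Then $\mu(f^{nk}\gamma) = \lambda^{-n}\cdot(\text{nonzero constant})$ is unbounded, but $f^{nk}\gamma$ is a closed curve of bounded homotopy type up to the action of $(f^k)^n_*$ on $\pi_1$, and the total $\mu$-measure it can pick up is controlled by its homotopy class paired with $[\mu]$; tracking this pairing gives a contradiction with unboundedness (or, dually, one pushes $\gamma$ \emph{forward} and gets $\mu(f^{-nk}\gamma)\to 0$ while $f^{-nk}\gamma$ still meets $\Lambda$ in a set of positive measure). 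If instead $[\mu] = 0$ in $H^1(\T^3;\R)$, then $\mu$ integrates to zero against every closed transversal, which for a holonomy invariant measure on a Reebless foliation of $\T^3$ is impossible unless $\mu = 0$: a standard argument (developing the transverse measure along leaves, using that $\pi_1(\T^3)$ is abelian and the foliation is taut) shows a nontrivial such measure always has nonzero period around some loop. This contradiction shows no proper minimal set exists.

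I expect the main obstacle to be the second step — making rigorous the passage from "$(f^k)^*\mu = \lambda\mu$ with $\lambda \ne 1$" to an outright contradiction. The cohomological bookkeeping is clean in principle, but one must be careful that the Ruelle--Sullivan class is genuinely nonzero (equivalently, that $\Lambda$ is not "cohomologically trivial"), and that the growth estimate $\mu(f^{nk}\gamma) \sim \lambda^{\mp n}$ is compared against the correct quantity. The cleanest route is probably: the class $[\mu]\in H^1(\T^3;\R)$ is $f^k$-invariant up to the scalar $\lambda$, so it lies in an eigenspace of $(f^k)^*$; hyperbolicity forces $[\mu]$ into the (un)stable eigenspace with $\lambda\ne 1$; then $\mu$ of a fixed transversal loop $\gamma$ equals $\langle [\mu],[\gamma]\rangle$, while $\mu$ of $f^{nk}\gamma$ equals $\langle (f^k)^{*n}[\mu],[\gamma]\rangle = \lambda^{n}\langle[\mu],[\gamma]\rangle$, which tends to $0$; but $f^{nk}\gamma$ remains transverse to $\Lambda$ and still crosses it, so its $\mu$-measure is bounded below by a fixed positive constant (the measure of a single crossing plaque, which is $f$-invariant up to the same scalar — and here the scalars must cancel, forcing $\lambda = 1$, a contradiction). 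Once the non-ergodicity hypothesis is in hand, combined with Theorem \ref{accerg} giving non-accessibility, this completes the reduction to $\Lambda = \T^3$.
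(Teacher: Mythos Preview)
Your opening moves are correct and match the paper: using Lemma~\ref{minsupport} and Corollary~\ref{finitesupports} to find $k$ with $f^k(\Lambda)=\Lambda$ is exactly how the paper begins. But from that point the paper takes a completely different and much shorter route, and your alternative route has a genuine gap.

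The paper's argument after obtaining $f^k(\Lambda)=\Lambda$ is a single line: apply Proposition~\ref{subsetdynamics} to the closed $f^k$-invariant set $\Lambda$ to conclude that every boundary leaf of $\Lambda$ has Anosov dynamics with dense periodic points, and then invoke Lemma~\ref{noad} to get $\partial\Lambda=\varnothing$, forcing $\Lambda=\T^3$. No measures, no cohomology. The holonomy-invariant measure and its Ruelle--Sullivan class enter the paper only \emph{after} minimality is established, in order to prove topological contraction along the center; you are trying to use that machinery prematurely to prove minimality itself.

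The gap in your argument is precisely where you sensed it. Having $(f^k)^*\mu=\lambda\mu$ with $\lambda\ne 1$ does not, by itself, contradict $\Lambda$ being proper. Your claimed contradiction hinges on ``$f^{nk}\gamma$ still crosses $\Lambda$, so its $\mu$-measure is bounded below by a fixed positive constant.'' But there is no such lower bound: the $\mu$-measure of a single transverse crossing of $\Lambda$ is \emph{not} $f^k$-invariant --- it scales by exactly the same factor $\lambda$, so the measure of each crossing of $f^{nk}\gamma$ shrinks at the same rate as the total. The two computations $\mu(f^{nk}\gamma)=\lambda^n\mu(\gamma)$ and $\langle[\mu],(f^k_*)^n[\gamma]\rangle$ are consistent with each other and with any $\lambda$, so no contradiction emerges. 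Your parenthetical ``the scalars must cancel, forcing $\lambda=1$'' is exactly the point that cannot be made to work: the scalars agree rather than cancel. To repair the proof, drop the measure argument at this stage and use Proposition~\ref{subsetdynamics} together with Lemma~\ref{noad} instead.
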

\begin{proof}
    Suppose $\Lambda$ is a minimal set. As the foliation $\Wus$ is $f$-invariant,
    $f^k(\Lambda)$ is also a minimal set for every integer $k$.  By Corollary
    \ref{finitesupports} and Lemma \ref{minsupport}, there is $k$ such that
    $f^k(\Lambda)=\Lambda$.  By Proposition \ref{subsetdynamics}, using $f^k$ in
    place of $f$, every leaf in $\partial \Lambda$ has Anosov dynamics with
    dense periodic points, and so by Lemma \ref{noad}, $\partial \Lambda =
    \varnothing$, which is only possible if the (non-empty) minimal set
    $\Lambda$ is all of $\T^3$.
\end{proof}

Now, fix $\mu$ such that $\supp \mu = \T^3$. As it has full support, $\mu(\gamma) >
0$ for any positive length curve $\gamma$ transverse to $\Wus$.

\begin{prop}
    There is $\lambda  \ne  1$ such that $f^* \mu = \lambda \mu$.
\end{prop}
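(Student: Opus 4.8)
The plan is to push the (still-to-be-established) equality $f^*\mu = \lambda\mu$ into $H^1(\T^3;\R)$, where the hyperbolic action of $f$ on cohomology immediately forbids $\lambda = 1$. So the first step is to recall Plante's correspondence between holonomy invariant measures and cohomology classes. Since we arranged the bundles $\Eu$, $\Ec$, $\Es$ to be orientable, $\Wus$ is transversely oriented, and a holonomy invariant transverse measure $\mu$ assigns to each loop $\gamma$ (put in general position with respect to $\Wus$) its signed transverse mass; holonomy invariance makes this assignment depend only on the class of $\gamma$ in $H_1(\T^3;\R)$, so it defines a class $[\mu]\in H^1(\T^3;\R)$. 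The key point of this step is that $[\mu]\neq 0$. By Proposition \ref{usint}, $\Wus$ has no compact leaves, so, being Reebless, it has no Reeb components and is therefore taut; in particular some closed transversal $\gamma$ exists, and since $\mu$ has full support, $\langle[\mu],[\gamma]\rangle = \mu(\gamma) > 0$.

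Next I would check that $f^*\mu$ is again a holonomy invariant measure for $\Wus$. Because $\Wus$ is $f$-invariant, $f$ carries $\Wus$-transversals to $\Wus$-transversals and conjugates the holonomy pseudogroup, so pushing $\mu$ forward by $f\inv$ yields a holonomy invariant measure whose support is $f\inv(\T^3) = \T^3$. Now $\T^3$ is the unique minimal set of $\Wus$ (Proposition \ref{usmin}) and is not a compact leaf, so the uniqueness theorem of Hector and Hirsch (Theorem \ref{hhmeasure}) applies and gives $f^*\mu = \lambda\mu$ for some real $\lambda$; comparing two nonzero positive measures forces $\lambda > 0$.

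Finally, applying the correspondence of the first step to the identity $f^*\mu = \lambda\mu$, while keeping track of whether $f$ preserves or reverses the transverse orientation of $\Wus$, gives $f^*[\mu] = \pm\lambda[\mu]$ in $H^1(\T^3;\R)$. Thus the nonzero class $[\mu]$ is an eigenvector, with real eigenvalue $\pm\lambda$, of the linear action of $f$ on $H^1(\T^3;\R)\cong\R^3$. This action is hyperbolic — it is the transpose of the hyperbolic integer matrix $A$, since $f$ is homotopic to $A$ — so none of its eigenvalues lies on the unit circle, and hence $\lambda\neq 1$. The one genuinely delicate ingredient is the nonvanishing of $[\mu]$: it relies on producing a closed transversal of positive $\mu$-mass, which is where tautness of Reebless foliations of $\T^3$ and the full support of $\mu$ enter, together with care about the transverse orientation so that the eigenvalue relation on $H^1$ is correctly stated. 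Once $[\mu]\neq 0$ is secured, hyperbolicity of $f$ on cohomology makes $\lambda\neq 1$ immediate.
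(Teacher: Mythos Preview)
Your proof is correct and follows essentially the same route as the paper: use Hector--Hirsch uniqueness (Theorem~\ref{hhmeasure}) to get $f^*\mu=\lambda\mu$ with $\lambda>0$, pass to the Plante cohomology class $[\mu]\in H^1(\T^3;\R)$, observe $f^*[\mu]=\pm\lambda[\mu]$, and use hyperbolicity of the action of $f$ on $H^1$ to rule out $\lambda=1$. You supply more detail than the paper on why $[\mu]\ne 0$ (via tautness---which indeed follows once there are no compact leaves---and a closed transversal of positive $\mu$-mass), whereas the paper simply refers to \cite{plante1975foliations} and \cite{hector-hirsch-partb} for this point.
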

\begin{proof}
    By Proposition \ref{hhmeasure}, $\mu$ is unique up to a constant.
    As the pullback defined by $f^* \mu(\gamma) = \mu(f \circ \gamma)$ is another
    non-zero holonomy-invariant measure, there is $\lambda > 0$ such that $f^*
    \mu = \lambda \mu$.

    Any path on $\T^3$ is homotopic to a path consisting of a concatenation of
    segments, each either tangent to $\Wus$ or transverse.  As the foliation is
    transversely orientable, this canonically associates $\mu$ with a non-zero
    element $[\mu]$ of the cohomology group $H^1(\T^3,\R)$.  (See
    \cite{plante1975foliations} or \cite{hector-hirsch-partb} for details.)
    Further,
    \[
        \lambda [\mu] = [\lambda \mu] = [f^* \mu] = \pm f^*[\mu].
    \]
    That is, $\pm \lambda$ is an eigenvalue of the linear map $f^*$ on
    $H^1(\T^3,\R) \cong \R^3$.  As $f$ is homotopic to Anosov, the map $f^*$ is
    hyperbolic, and $\lambda  \ne  1$.
\end{proof}
By replacing $f$ by $f \inv$ if necessary, assume for the remainder of the
section that $f^* \mu = \lambda \mu$ where $\lambda < 1$.

\begin{lemma}
    For $\epsilon>0$ there is $\delta>0$ such that for any curve $\gamma$ tangent
    to $\Ec$,
    \[
        \mu(\gamma) < \delta   \quad \Rightarrow \quad   \length(\gamma) < \epsilon.
    \]  \end{lemma}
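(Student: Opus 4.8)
The plan is to argue by contradiction using a compactness argument on the space of center curves. Suppose the implication fails: then there is $\epsilon > 0$ and a sequence of curves $\gamma_n$ tangent to $\Ec$ with $\mu(\gamma_n) \to 0$ but $\length(\gamma_n) \geq \epsilon$ for all $n$. By truncating, I may assume $\length(\gamma_n) = \epsilon$ exactly. Each $\gamma_n$ is an arc of a center leaf (here we use that the center is integrable for systems homotopic to Anosov, as recalled in the Definitions section, so these curves lie inside genuine leaves of $\Wc$); parametrize by arclength so $\gamma_n : [0,\epsilon] \to \T^3$. Since $\T^3$ is compact and the tangent directions $\Ec$ vary continuously, the family of unit-speed center curves of length $\epsilon$ is precompact in the $C^0$ (indeed $C^1$) topology — the center curves have uniformly bounded curvature because $\Ec$ is a continuous bundle and the plaques depend continuously on the basepoint. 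Pass to a subsequence so that $\gamma_n \to \gamma_\infty$ uniformly, where $\gamma_\infty$ is again a center curve of length $\epsilon$.

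Next I would transfer the measure estimate to the limit. The curve $\gamma_\infty$ has positive length, and it is transverse to $\Wus$ since $\Ec$ is transverse to $\Eu \oplus \Es$ everywhere; because $\supp \mu = \T^3$ (established just above this lemma), $\mu(\gamma_\infty) > 0$. Set $d := \mu(\gamma_\infty) > 0$. The goal is to derive the contradiction $\mu(\gamma_n) \geq d/2$ for large $n$, which is incompatible with $\mu(\gamma_n) \to 0$. To do this I use a holonomy argument: since $\gamma_n \to \gamma_\infty$ uniformly and both are transverse to $\Wus$, for $n$ large the endpoints of $\gamma_n$ lie in small foliation boxes around the endpoints of $\gamma_\infty$, and one can slide $\gamma_n$ onto a curve homotopic (through transversals, rel the $\Wus$-leaves through the endpoints) to $\gamma_\infty$ plus two short holonomy connectors. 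Because $\mu$ is holonomy invariant, $\mu$ of the slid curve equals $\mu(\gamma_\infty)$, and the two connectors can be taken arbitrarily short — hence of arbitrarily small $\mu$-measure — so $\mu(\gamma_n)$ is within, say, $d/2$ of $\mu(\gamma_\infty)$. This contradicts $\mu(\gamma_n) \to 0$.

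The main obstacle is making the holonomy-sliding step rigorous: one must control how $\mu$ behaves when a transverse curve is perturbed, and in particular check that a $C^0$-small perturbation of a transversal changes $\mu$ by a small amount. The clean way is to work locally in a finite cover of $\T^3$ by foliation charts for $\Wus$: in each chart $\mu$ is (the pullback of) a fixed measure on the local transverse interval, and holonomy between transversals in a chart is a homeomorphism pushing this measure forward exactly. For the limiting curve $\gamma_\infty$, subdivide $[0,\epsilon]$ finely enough that each subarc lies in a single chart; then for $n$ large each corresponding subarc of $\gamma_n$ also lies in that chart, and within the chart the two subarcs are joined to their $\Wus$-holonomy images, whose $\mu$-measures agree. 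Summing over the finitely many subarcs and absorbing the finitely many short connectors (whose total $\mu$-measure $\to 0$ as the subdivision mesh and $n^{-1}$ go to zero) yields $|\mu(\gamma_n) - \mu(\gamma_\infty)| \to 0$, completing the argument. A minor point to dispatch along the way is that $\mu$ assigns finite mass to any compact arc transverse to $\Wus$, which holds since such an arc meets only finitely many charts and $\mu$ is finite on each; and that $\mu$ is non-atomic enough that short connectors have small measure, which follows from full support together with the local product structure (or simply: choose connectors to be short subarcs of center leaves, whose $\mu$-measure tends to $0$ with length by the very continuity of the local transverse measures in the chart).
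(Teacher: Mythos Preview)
Your overall plan---contradiction, pass to a single foliation box, and exploit holonomy invariance against full support of $\mu$---is exactly the paper's, but the execution has a real gap and an unnecessary detour.

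The gap is the claim that the ``short connectors'' have small $\mu$-measure, which you justify by ``continuity of the local transverse measures in the chart.'' A holonomy-invariant transverse measure is just a Radon measure on each transversal; nothing rules out atoms a priori, and an atom would make short connectors carry fixed positive mass no matter how short they are. (One can in fact rule out atoms here: an atom sits on a single leaf, that leaf is dense by minimality of $\Wus$, and then any compact transversal would meet it in infinitely many points each carrying the same atom---contradicting local finiteness. But you did not supply this, and the phrase ``continuity of the local transverse measures'' is not a proof.) A second, smaller issue: you invoke integrability of $\Ec$, which in the point-wise setting of this section is neither established nor needed; the lemma only concerns curves tangent to $\Ec$, and Arzel\`a--Ascoli plus continuity of the line field $\Ec$ already gives the $C^1$ precompactness you want.

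The paper avoids the connector problem entirely by reversing the direction of the estimate. Instead of constructing a limit curve $\gamma_\infty$ and arguing $\mu(\gamma_n)\to\mu(\gamma_\infty)$, it first truncates each $\gamma_k$ below the Lebesgue number of a finite atlas, so that (after passing to a subsequence) all $\gamma_k$ lie in a single chart $U$. It then projects via the holonomy retraction $r:U\to J$ onto a fixed center transversal $J$; since the endpoints $r(\gamma_k(0))$ and $r(\gamma_k(1))$ converge to distinct limits, the images $r(\gamma_k)$ eventually all contain a fixed nontrivial subarc $J'\subset J$. Holonomy invariance gives $\mu(J')\le\liminf\mu(\gamma_k)=0$, contradicting full support. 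This needs only the inclusion $J'\subset r(\gamma_k)$ and never touches the question of atoms or connectors.
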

\begin{proof}
    Suppose instead that $\{\gamma_k\}$ is a sequence of curves $[0,1] \to \T^3$
    such that $\mu(\gamma_k)$ goes to zero, but $\length(\gamma_k)$ does not.
    Take a finite atlas of foliation charts for $\Wus$ covering the manifold,
    and by the Lebesgue Covering Lemma fix $\rho>0$ such that every ball of
    radius $\rho$ is contained in the domain of one of the charts.  Without
    loss of generality, by replacing each $\gamma_k$ by a subcurve, which only
    decreases its measure, assume $\length \gamma_k < \rho$ for all $k$.  By
    restricting to a subsequence, further assume that all of the $\gamma_k$ lie
    in the domain $U$ of one of the foliation charts, and that the sequences
    of endpoints $\{\gamma_k(0)\}$ and $\{\gamma_k(1)\}$ converge.

    There is a center curve, $J$, through $U$ such that the $\Wus$ holonomy
    inside the foliation box defines a retract $r:U \to J$.  By transversality
    of $\Ec$ to $\Wus$, it follows that
    \[
        \lim_k r(\gamma_k(0))  \ne  \lim_k r(\gamma_k(1)).
    \]
    Therefore, $J' = \bigcup_{n=1}^\infty \bigcap_{k=n}^\infty r(\gamma_k([0,1]))$
    is a non-trivial subinterval of $J$. Then
    \[
        \mu(J')
         \le  \lim \inf \mu(r(\gamma_k([0,1])))
        = \lim \inf \mu(\gamma_k([0,1]))
        = 0.
    \]
    This contradicts the fact that $\mu$ has full measure.
\end{proof}
\begin{lemma}
    For $\epsilon>0$ there is $\delta>0$ such that for any curve $\gamma$ tangent
    to $\Ec$,
    \[
        \length(\gamma) < \delta   \quad \Rightarrow \quad   \mu(\gamma) < \epsilon.
    \]  \end{lemma}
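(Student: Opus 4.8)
The plan is to argue by contradiction, exactly as in the preceding lemma, and the crux is to know that the transverse measure $\mu$ has no atoms, so I would record that fact first. Since $\Wus$ is minimal by Proposition \ref{usmin} and has no compact leaves by Proposition \ref{usint}, every leaf is both dense and non-compact. Fix any transversal arc $T$. If $\mu$ had an atom at a point $t_0 \in T$, lying on the leaf $L$, then density of $L$ in $\T^3$ would force $L \cap T$ to be dense, hence infinite, in $T$ (by transversality, $L$ entering a small foliation chart adapted to $T$ must meet $T$ inside that chart); and transporting the atom at $t_0$ by the $\mu$-preserving holonomy along paths in $L$ to the other points of $L \cap T$ would place an atom of the same positive mass at each of them. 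A compact transversal carrying infinitely many atoms of a fixed mass has infinite measure, contradicting local finiteness of $\mu$. Hence $\mu$, restricted to any transversal, is non-atomic.

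Now suppose the lemma fails, so that for some $\epsilon > 0$ there are curves $\gamma_k \colon [0,1] \to \T^3$ tangent to $\Ec$ with $\length(\gamma_k) \to 0$ but $\mu(\gamma_k) \ge \epsilon$ for all $k$. Choose a finite atlas of foliation charts for $\Wus$ and a Lebesgue number $\rho > 0$, so that for $k$ large the curve $\gamma_k$ lies in a single chart; passing to a subsequence, assume it is always the same chart $U$, identified as a product of plaques with a transversal arc $J$, and that $\gamma_k$ converges uniformly to a point $p$. Let $\pi \colon U \to J$ be the holonomy retraction collapsing plaques. Because $\Ec$ is transverse to $\Wus$, each $\gamma_k$ is a graph over the transverse direction, so $I_k := \pi(\gamma_k)$ is a subinterval of $J$, and holonomy invariance gives $\mu(\gamma_k) = \mu|_J(I_k)$. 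Since $\gamma_k \to p$ and $\pi$ is continuous, $\diam(I_k) \to 0$ and $\dist(I_k, \pi(p)) \to 0$.

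Finally I would nest the intervals: let $\hat I_k$ be the closed convex hull of $\bigcup_{j \ge k} I_j$ in $J$. These form a decreasing sequence of compact intervals with $\bigcap_k \hat I_k = \{\pi(p)\}$ (each $\hat I_k$ contains $\pi(p)$ as a limit of points of the $I_j$, and their diameters tend to $0$), and $\mu|_J(\hat I_k) \ge \mu|_J(I_k) = \mu(\gamma_k) \ge \epsilon$. By continuity from above of the finite measure $\mu|_J$, the point $\pi(p)$ would then be an atom of mass at least $\epsilon$, contradicting the first step; this completes the argument. The only ingredient that is not routine foliation-chart bookkeeping is the non-atomicity of $\mu$, and that is where I expect the real work to lie; once it is in place, the rest parallels the previous lemma.
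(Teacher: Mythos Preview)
Your proof is correct. The paper itself omits this proof entirely, writing only ``The proof is similar to the previous lemma, and is omitted,'' so you have supplied what the authors left implicit. Your argument is precisely the natural dual of the preceding lemma: where that one derived a contradiction from the \emph{full support} of $\mu$ (a shrinking-$\mu$ family of center arcs cannot have length bounded below), this one derives a contradiction from the \emph{non-atomicity} of $\mu$ (a shrinking-length family cannot have $\mu$-mass bounded below). You correctly identify non-atomicity as the one extra ingredient needed and deduce it from minimality of $\Wus$ (Proposition~\ref{usmin}) and the absence of compact leaves (Proposition~\ref{usint}) via the standard holonomy-transport argument, which is exactly the tool available at this point in the paper.
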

The proof is similar to the previous lemma, and is omitted.

\begin{cor} \label{ccont}
    For $r,R>0$ there is an integer $N$ such that for any curve $\gamma$
    tangent to $\Ec$ and any $n>N$,
    \[
        \length(\gamma) < R  \quad \Rightarrow \quad  \length(f^n \circ \gamma) < r.
    \]  \end{cor}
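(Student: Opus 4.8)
The plan is to combine the two preceding lemmas with the contraction $f^* \mu = \lambda \mu$, where $0 < \lambda < 1$. First I would record two elementary facts. Since $\Ec$ is $Tf$-invariant, $f^n \circ \gamma$ is again a curve tangent to $\Ec$ whenever $\gamma$ is; and since $\Ec$ is transverse to $\Wus$, every such curve is a transversal of $\Wus$, so the holonomy invariant measure $\mu$ assigns it a well-defined finite value. Iterating the relation $\mu(f \circ \gamma) = (f^* \mu)(\gamma) = \lambda \mu(\gamma)$ gives $\mu(f^n \circ \gamma) = \lambda^n \mu(\gamma)$ for all $n \ge 0$ and every center curve $\gamma$.

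Next I would extract a uniform upper bound on $\mu(\gamma)$ over \emph{all} center curves of length less than $R$. Apply the lemma bounding $\mu(\gamma)$ in terms of $\length(\gamma)$ with $\epsilon = 1$ to obtain $\delta_1 > 0$ such that every center curve of length $< \delta_1$ has $\mu$-measure $< 1$. An arbitrary center curve $\gamma$ with $\length(\gamma) < R$ splits into at most $\lceil R/\delta_1 \rceil$ consecutive subcurves, each of length $< \delta_1$; by additivity of the transverse measure $\mu$ along $\gamma$, this yields $\mu(\gamma) \le \lceil R/\delta_1 \rceil =: C_R$, a constant depending only on $R$.

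For the reverse direction, apply the lemma bounding $\length(\gamma)$ in terms of $\mu(\gamma)$ with $\epsilon = r$ to obtain $\delta_2 > 0$ such that any center curve $\eta$ with $\mu(\eta) < \delta_2$ has $\length(\eta) < r$. Since $0 < \lambda < 1$, choose $N$ with $\lambda^N C_R < \delta_2$. Then for any center curve $\gamma$ with $\length(\gamma) < R$ and any $n > N$, the curve $\eta = f^n \circ \gamma$ is tangent to $\Ec$ and satisfies $\mu(\eta) = \lambda^n \mu(\gamma) \le \lambda^N C_R < \delta_2$, hence $\length(f^n \circ \gamma) < r$, as claimed. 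The step that needs the most care is the uniform bound $C_R$: a length-$R$ center curve may be long relative to the foliation charts, so one must subdivide it and invoke additivity of $\mu$ to control the whole family of such curves by a single constant rather than curve by curve. Everything else is just the observation that $\lambda^n \to 0$.
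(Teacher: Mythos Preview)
Your proof is correct and follows the same approach as the paper, which simply states that the result is ``a straightforward consequence of the previous two lemmas, and the fact that $\mu(f^n \circ \gamma) = \lambda^n \mu(\gamma)$.'' You have carefully spelled out the one nontrivial detail the paper leaves implicit, namely the uniform bound $\mu(\gamma) \le C_R$ for all center curves of length less than $R$, obtained by subdividing and using additivity of the transverse measure.
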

\begin{proof}
    This is a straightforward consequence of the previous two lemmas, and the
    fact that $\mu(f^n \circ \gamma) = \lambda^n \mu(\gamma)$.
\end{proof}
If the center bundle $\Ec$ were uniquely integrable, it would be easy to prove
from the topological contraction in the center direction that $f$ is
expansive.  However, technical issues arise in trying to prove unique
integrability, so we continue the proof without it.  First is a form
of expansiveness in the $\Ec \oplus \Es$ direction.

\begin{lemma} \label{csexp}
    There is a constant $\delta > 0$ such that the following holds.
    If $\alpha^s, \alpha^c : [0,1] \to \T^3$ are
    paths tangent to $\Es$ and $\Ec$ respectively where
    $\alpha^s(1) = \alpha^c(0)$,
    $x = \alpha^s(0)$, and
    $y = \alpha^c(1),$
    then $x=y$ if and only if $d(f^n(x), f^n(y)) < \delta$ for all $n \in \Z$.
\end{lemma}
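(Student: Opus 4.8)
The plan is to establish the nontrivial implication: if $\delta>0$ is chosen small enough and $d(f^n(x),f^n(y))<\delta$ for all $n\in\Z$, then $x=y$; the converse is immediate. First I would reduce to a local picture. Uniform contraction along $\Es$ gives $\length(f^n\circ\alpha^s)\to 0$, and Corollary~\ref{ccont} gives $\length(f^n\circ\alpha^c)\to 0$, so for large $N$ both $f^N\circ\alpha^s$ and $f^N\circ\alpha^c$ are shorter than any prescribed $\eta>0$. Replacing $(x,y,\alpha^s,\alpha^c)$ by its $f^N$-image changes neither the hypothesis (which is invariant under shifting $n$) nor the conclusion ($f$ being injective), so I may assume $\length(\alpha^s),\length(\alpha^c)<\eta$. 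Lifting to $\R^3$ and fixing a lift $\tilde x$, the lift of $\alpha^s*\alpha^c$ starting at $\tilde x$ then ends at a lift $\tilde y$ of $y$ with $d(\tilde f^n(\tilde x),\tilde f^n(\tilde y))<\delta$ for all $n\le 0$ (routine bookkeeping with lifts, for $\delta$ small); write $\tilde z$ for the lift of $z=\alpha^s(1)=\alpha^c(0)$ on this path.

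The key step is to show $\alpha^c$ is constant, so that $y=z$. Since $\Wus$ is transversely oriented (the bundles being orientable after the finite cover) and $\mu$ is a holonomy-invariant measure of full support, $\mu$ lifts to $\R^3$ as $dG$ for a continuous function $G:\R^3\to\R$ whose level sets are the leaves of the lifted foliation. As $G$ differs from the linear functional $[\mu]\in H^1(\T^3,\R)$ by a $\Z^3$-periodic function, it satisfies $|G(\tilde a)-G(\tilde b)|\le\phi(d(\tilde a,\tilde b))$ for a modulus $\phi$ with $\phi(0)=0$. From $f^*\mu=\lambda\mu$ I get $G\circ\tilde f=\lambda G+c$, hence $G(\tilde f^n(\tilde x))-G(\tilde f^n(\tilde y))=\lambda^n\bigl(G(\tilde x)-G(\tilde y)\bigr)$. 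Since $\tilde\alpha^s$ is tangent to $\Es\subset T\Wus$, the points $\tilde x,\tilde z$ lie on a common leaf, so $G(\tilde x)=G(\tilde z)$, and
\[
    |G(\tilde z)-G(\tilde y)| = |G(\tilde x)-G(\tilde y)| = \lambda^{-n}\,|G(\tilde f^n(\tilde x))-G(\tilde f^n(\tilde y))| \le \lambda^{-n}\,\phi(\delta).
\]
Letting $n\to-\infty$ and using $0<\lambda<1$ gives $G(\tilde z)=G(\tilde y)$: the endpoints of the lifted $\alpha^c$ lie on one leaf. But $\alpha^c$ is short and its velocity lies in $\Ec$, a complement to $T\Wus$, so it is transverse to $\Wus$ everywhere and stays in one foliation chart, where the transverse ($G$-)coordinate is strictly monotone along it; equal endpoint values then force $\alpha^c$ to be constant, so $y=z\in\Ws(x)$.

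Finally, $x$ and $y$ lie on a common stable leaf with intrinsic distance $d_{\Ws}(x,y)\le\length(\alpha^s)<\eta$, and $d(f^n(x),f^n(y))<\delta$ for all $n\le 0$. This is the usual expansiveness of partially hyperbolic systems along $\Es$: backward iteration expands $d_{\Ws}$ by a uniform factor $>1$, so if $x\ne y$ there is an $n\le 0$ with $d_{\Ws}(f^n(x),f^n(y))$ in a fixed small window, on which uniform regularity of stable leaves forces the ambient distance past a uniform constant $\delta_0>0$. Choosing $\delta<\delta_0$ (and small enough for the lift arguments above) gives a contradiction, hence $x=y$.

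I expect the middle step, eliminating $\alpha^c$, to be the real obstacle, exactly because $\Ec$ need not be uniquely integrable, so one cannot simply ``follow the center leaf''. The mechanism that rescues it is that the holonomy-invariant measure of $\Wus$ represents an eigenvector of $f^*$ on $H^1(\T^3,\R)$ with eigenvalue $\lambda<1$, so back-iteration contracts the transverse displacement $G(\tilde x)-G(\tilde y)$ to zero. (One could instead avoid the local reduction by noting that $[\mu]$ spans an $f^*$-invariant irrational line, hence is totally irrational on $\Z^3$, so that $G$ already separates distinct lifts of $y$ and one reads off $\mu(\alpha^c)=0$ directly.)
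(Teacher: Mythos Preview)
Your argument is correct, but it is organized quite differently from the paper's proof. The paper never lifts to $\R^3$ and never introduces the primitive $G$ of $\mu$; instead it fixes a small scale $r$ and, by compactness and transversality of $\Es$ and $\Ec$, finds $\delta>0$ so that whenever the larger of $\length\gamma^s,\length\gamma^c$ lies in a window $(ar,r)$ the endpoints are at least $\delta$ apart. Then it observes that both $\length(f^n\circ\alpha^s)$ and $\length(f^n\circ\alpha^c)$ tend to $0$ as $n\to+\infty$ and to $\infty$ as $n\to-\infty$ (the center case coming from the two $\mu$-versus-length lemmas), so the maximum of the two lengths must pass through the window $(ar,r)$ at some iterate, forcing $d(f^{n+1}(x),f^{n+1}(y))>\delta$. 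This is shorter and stays entirely on $\T^3$. Your route, by contrast, separates the center and stable directions: you use the functional equation $G\circ\tilde f=\lambda G+c$ to kill the transverse ($G$-)displacement under backward iteration, which forces $\alpha^c$ to be trivial, and then you invoke ordinary expansiveness along $\Ws$. The cost is the lifting and the bookkeeping needed to propagate the $\delta$-bound to the cover; the payoff is a cleaner conceptual picture in which the role of $\mu$ as an $f^*$-eigenvector of $H^1$ is made explicit (your parenthetical alternative, using total irrationality of $[\mu]$ to bypass the local reduction, is correct and is a nice illustration of this).
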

\begin{proof}
    Fix $r>0$ sufficiently small.  By transversality of $\Es$ and $\Ec$,
    for any $a \in (0,1)$, there is $\delta > 0$ such that the following
    property holds.
    \begin{quote}
        If $\gamma^s,\gamma^c:[0,1] \to \T^3$ are curves tangent to $\Es$ and $\Ec$
        respectively, $\gamma^s(1) = \gamma^c(0)$, and
        $
        a r < \max\{\length \gamma^s, \length \gamma^c\} < r
        $
        then $d(\gamma^s(0), \gamma^c(1)) > \delta$.
    \end{quote}
    Fix $a \in (0,1)$ such that $\length(f \circ \gamma) > a \length (\gamma)$ for
    any positive length $C^1$ path $\gamma:[0,1] \to \T^3$.  This defines $\delta$.

    Now suppose $\alpha^s$, $\alpha^c$, are as in the statement of the lemma,
    and that $x=\alpha^s(0) \ne \alpha^c(1)=y$.  We want to find $n \in \Z$ such
    that $d(f^n(x), f^n(y)) > \delta$.
    If $\alpha^c$ or $\alpha^s$ has length zero, this task is easy, so assume
    both are of positive length.  Then,
    $\mu(f^n \circ \alpha^c) = \lambda^n \mu(\alpha^c)$
    implies that $\length(f^n \circ \alpha^c)$ goes to zero as $n \to +\infty$ and
    goes to infinity as $n \to -\infty$.  The definition of partial
    hyperbolicity implies the same limits for $\length(f^n \circ \alpha^s)$.
    Thus, there is $n \in \Z$ such that
    \[
        \max\{\length(f^n \circ \alpha^s), \length(f^n \circ \alpha^c)\} > r
    \]
    and
    \[
        \max\{\length(f^{n+1} \circ \alpha^s), \length(f^{n+1} \circ \alpha^c)\} < r.
    \]
    The constant $a$ was defined such that
    \[
        a r <
        \max\{\length(f^{n+1} \circ \alpha^s), \length(f^{n+1} \circ \alpha^c)\} < r
    \]
    which implies that $d(f^{n+1}(x), f^{n+1}(y)) > \delta$.
\end{proof}
\begin{lemma} \label{lps}
    For $\epsilon < 0$, there is $\delta < 0$ such that if $x,y \in \T^3$ satisfy
    $d(x,y) < \delta$, then there are curves $\gamma^\sigma:[0,1] \to \T^3$ for
    $\sigma=c,s,u$ each of length less than $\epsilon$ and such that the
    concatenation $\gamma^c \gamma^s \gamma^u$ is a path from $x$ to $y$.

    Moreover, the point $\gamma^s(1)=\gamma^u(0)$ is unique.  To be precise, if
    $\alpha^c \alpha^s \alpha^u$ is another such triple of paths, then
    \[
        \alpha^s(1)=\alpha^u(0)=\gamma^s(1)=\gamma^u(0).
    \]  \end{lemma}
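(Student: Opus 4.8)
The statement is a local product structure result: nearby points are joined by a short center–stable–unstable concatenation, with the intermediate point unique. The plan is to build the path in two halves, first producing a short $\gamma^u$ from a point $q$ to $y$ using the unstable foliation, then joining $x$ to $q$ by a $\gamma^c\gamma^s$ pair using Lemma~\ref{csexp}, and finally establishing uniqueness of the junction point via the expansiveness-type estimate of Lemma~\ref{csexp} together with partial hyperbolicity of the unstable direction. Throughout, the key geometric input is transversality of the three bundles and the fact that $\Wu$ and $\Ws$ are genuine (uniquely integrable) foliations, while $\Ec$ need only be a direction; the contraction built in Corollary~\ref{ccont} controls center curves under forward iteration.

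First I would set up the construction in a lift or in a small ball: given $\epsilon>0$, choose a size $r>0$ small enough that inside any ball of radius comparable to $r$ the local stable, center, and unstable plaques meet transversally and the local ``product box'' picture is valid. Because $\Wu$ is a foliation transverse to $\Ecs$, for $x,y$ sufficiently close there is a point $q$ on the local center-stable plaque of $x$ lying on the local unstable leaf of $y$, with $\gamma^u$ the short unstable arc from $q$ to $y$ of length $<\epsilon$. Within the center-stable plaque through $x$, I then want to connect $x$ to $q$ by first a center arc then a stable arc: the center direction through $x$ can be integrated for a short length (any integral curve of $\Ec$ will do; unique integrability is not needed for existence), reaching a point $z$, and then the stable leaf through $z$ — which is an honest foliation inside the center-stable plaque — is followed to $q$. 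Transversality of $\Es$ and $\Ec$ inside the two-dimensional center-stable plaque guarantees these two arcs exist and can be taken of length $<\epsilon$ once $\delta$ is small. This produces the desired triple $\gamma^c\gamma^s\gamma^u$.

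For uniqueness of the junction point $p:=\gamma^s(1)=\gamma^u(0)$, suppose $\alpha^c\alpha^s\alpha^u$ is another such triple and let $p'=\alpha^s(1)=\alpha^u(0)$. Both $p$ and $p'$ lie on $\Wu(y)$ (endpoints of short unstable arcs ending at $y$), and both are reached from $x$ by a center-then-stable concatenation. Apply Lemma~\ref{csexp} to the pair $x,p$ and to the pair $x,p'$: if $p\ne p'$ then, since $p,p'$ are on a common unstable leaf at bounded distance, the negative iterates $f^{-n}$ contract their unstable separation, so for all sufficiently negative $n$ they stay $\delta$-close; meanwhile the forward iterates — using that $p$ and $p'$ are each center-stable-related to $x$ and hence to each other — also stay $\delta$-close by Corollary~\ref{ccont} applied to the center pieces and contraction of the stable pieces. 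Thus $d(f^n(p),f^n(p'))<\delta$ for all $n\in\Z$, which by (a relative form of) Lemma~\ref{csexp} forces $p=p'$. The one subtlety is packaging $p$ and $p'$ correctly so that Lemma~\ref{csexp} applies directly to their separation; I would phrase it as: $p$ and $p'$ are joined by a center arc followed by a stable arc (obtained by concatenating the two given center-stable paths through $x$ and reorganizing inside the center-stable plaque), so Lemma~\ref{csexp} applies to the pair $(p,p')$ itself, and the uniform bound on all iterates then yields equality.

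**Main obstacle.** The delicate part is the uniqueness argument, specifically justifying that two different center-stable connections from $x$ can be compared by a single center-then-stable path so that Lemma~\ref{csexp} applies. This requires working inside the two-dimensional center-stable plaque and using that $\Ws$ is a foliation there while only tracking a center \emph{curve} (not a leaf), then checking that the reorganized path still has controlled length and that both forward iterates (center contraction plus stable contraction) and backward iterates (unstable contraction, since $p,p'\in\Wu(y)$) keep the two candidate junction points uniformly close. Getting the two-sided estimate to line up cleanly — rather than just the forward side handled by Corollary~\ref{ccont} — is where care is needed.
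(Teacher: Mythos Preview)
Your existence argument is fine and matches the paper's one-line appeal to transversality of the three bundles. The uniqueness argument, however, has a genuine gap at exactly the point you flag as the ``main obstacle,'' and your proposed resolution does not work. You want to connect the two candidate junction points $p$ and $p'$ by a single stable-then-center path so that Lemma~\ref{csexp} applies. But the data you have is a four-arc concatenation $p \to z \to x \to z' \to p'$ of type $s,c,c,s$, and without unique integrability of $\Ec$ (which the paper is explicitly avoiding) there is no reason the two center pieces merge into one center arc, nor that a fresh center-then-stable pair exists inside any ``center-stable plaque'' --- indeed, there is no $cs$-foliation to speak of here. Invoking a ``relative form'' of Lemma~\ref{csexp} does not help, because that lemma's proof genuinely uses that the connection is a \emph{single} $s$-arc followed by a \emph{single} $c$-arc in order to choose the one iterate at which the maximum length crosses the threshold $r$.

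The paper sidesteps this completely by not using Lemma~\ref{csexp} at all for uniqueness and by working only with \emph{forward} iterates. The argument is: keep the raw four-arc $cs$-path from $p$ to $q$ (no reorganization). By Corollary~\ref{ccont} and stable contraction, there is $N$ so that for all $n>N$ each of the four arcs has length $<R/4$, hence $d(f^n p,f^n q)<R$. On the other hand, $p$ and $q$ lie on the common unstable leaf through $y$ at unstable distance $<r$; choosing $r$ small relative to $N$, unstable expansion forces $d(f^n p,f^n q)>R$ for some $n>N$ whenever $p\ne q$. This is a direct contradiction. The key point you missed is that the contradiction can be arranged entirely in forward time, so there is no need to control backward iterates, no need to reorganize the $cs$-path, and no appeal to Lemma~\ref{csexp}.
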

\begin{proof}
    The existence part of the proof follows from the transversality of the
    subbundles.  Here, we prove the uniqueness claimed above.  Suppose
    $\epsilon>0$ is given.
    As $\Wu$ is expanded by $f$, there is $R>0$ such that if $x$ and $y$ are
    distinct points on the same leaf of $\Wu$, then
    \[
        d(f^n(p), f^n(q)) > R
    \]
    for some (possibly negative) integer $n$.  For a given integer $N$, there
    is $r>0$ such that if $d_u(p,q) < r$ then $n$ can be taken greater than
    $N$.

    Using Corollary \ref{ccont}, fix $N$ such that for any $n>N$ and any
    curve $\gamma$ tangent either to $\Ec$ or $\Es$
    \[
        \length(\gamma) < \epsilon  \quad \Rightarrow \quad  \length(f^n \circ \gamma) < \frac{R}{4}
    \]
    for all $n>N$.
    Once $N$ is fixed, fix $r > 0$ such that
    \[
        d_u(p,q) < r  \quad \Rightarrow \quad  d(f^n(p), f^n(q)) > R
    \]
    for some $n>N$.

    Now set $\epsilon' = \min(r/2, \epsilon)$ and let $\delta > 0$ be the
    corresponding constant in the existence portion of this lemma.
    Suppose $x,y \in \T^3$ are points such that $d(x,y) < \delta$ and
    $\gamma^c \gamma^s \gamma^u$ and
    $\alpha^c \alpha^s \alpha^u$ are triples of paths of length at most
    $\epsilon'$.
    Let $p = \gamma^s(1) = \gamma^u(0)$ and $q = \alpha^s(1) = \alpha^u(0)$.
    Then, as $p$ and $q$ are connected by the concatenation of four paths of
    length at most $\epsilon$ tangent either to $\Es$ or $\Ec$, it follows that
    $d(f^n(p),f^n(q)) < R$ for all $n>N$.  However, as $p$ and $q$ are also
    connected by the concatenation of two paths of length at most $r/2$
    tangent to $\Eu$, if $p  \ne  q$, it follows that $d(f^n(p), f^n(q)) > R$ for
    some $n>N$.  Thus $p$ must equal $q$ and the desired uniqueness is proved.
\end{proof}
\begin{prop}
    $f$ is expansive.
\end{prop}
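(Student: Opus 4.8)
The plan is to manufacture an expansiveness constant out of the three facts already in hand: the topological contraction of center curves (Corollary~\ref{ccont}), the $\Ec\oplus\Es$ expansiveness (Lemma~\ref{csexp}), and the local product structure together with its uniqueness clause (Lemma~\ref{lps}). Concretely, I would first fix the constants. Let $R>0$ be the number recalled in the proof of Lemma~\ref{lps}: any two distinct points on a common leaf of $\Wu$ are moved to distance greater than $R$ by some iterate of $f$, and when the two points are close enough along the leaf that iterate may be taken positive. Using Corollary~\ref{ccont} (and absorbing the finitely many small positive iterates, over which Corollary~\ref{ccont} says nothing, into a crude bound via the Lipschitz constant of $f$), choose $\epsilon>0$ so small that every center curve of length $<\epsilon$ has all forward images of length $<R/4$; shrinking $\epsilon$ further, also require $\epsilon<R/4$ (so stable curves of length $<\epsilon$ keep length $<R/4$ under every forward iterate, since $\Es$ is contracted) and require $\epsilon$ small enough that unstable curves of length $<\epsilon$ fall into the ``positive iterate'' case above. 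Let $\delta_{\mathrm{lps}}$ be the constant Lemma~\ref{lps} assigns to this $\epsilon$, let $\delta_{\mathrm{cs}}$ be the constant of Lemma~\ref{csexp}, and set the candidate expansiveness constant $\delta_0 := \min\{\delta_{\mathrm{lps}},\,\delta_{\mathrm{cs}},\,R/4\}$.

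Now suppose $x,y\in\T^3$ satisfy $d(f^n x, f^n y) < \delta_0$ for all $n\in\Z$; the goal is $x=y$. Since $d(x,y)<\delta_0\le\delta_{\mathrm{lps}}$, Lemma~\ref{lps} supplies curves $\gamma^c,\gamma^s,\gamma^u$ of length $<\epsilon$ with $\gamma^c\gamma^s\gamma^u$ a path from $x$ to $y$; write $z:=\gamma^s(1)=\gamma^u(0)$, so that $x$ is joined to $z$ by $\gamma^c$ followed by $\gamma^s$, and $z$ lies on $\Wu(y)$, joined to $y$ by the short curve $\gamma^u$. The key claim is that $z=y$. Indeed, for every $n\ge 0$ the image path $f^n\circ(\gamma^c\gamma^s)$ joins $f^n x$ to $f^n z$ and, by the choice of $\epsilon$, has length less than $R/4+R/4=R/2$; hence $d(f^n z, f^n y)\le d(f^n z, f^n x)+d(f^n x, f^n y) < R/2 + R/4 < R$ for all $n\ge 0$. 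If $z\ne y$ this contradicts the expansion property of $\Wu$ in its ``positive iterate'' form, which applies because $\gamma^u$ is short. Therefore $z=y$, so $x$ is joined to $y$ by the center curve $\gamma^c$ followed by the stable curve $\gamma^s$. Reversing this concatenation exhibits $y$ joined to $x$ by a stable curve followed by a center curve, so Lemma~\ref{csexp} applies to the pair $(y,x)$: since $d(f^n y, f^n x) < \delta_0 \le \delta_{\mathrm{cs}}$ for all $n\in\Z$, it forces $y=x$, as desired. Thus $\delta_0$ is an expansiveness constant for $f$.

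I expect the only genuinely delicate points to be, first, the ordering of the constants in the first paragraph ($R$ before $\epsilon$ before $\delta_0$), and second, the verification that the $\Wu$-separation in the ``$z=y$'' step occurs in \emph{forward} time, since that is exactly the regime where the forward contraction of center and stable curves is available to contradict it; both are handled by keeping $\gamma^u$ short and invoking the quantitative version of the expansion property already recorded inside the proof of Lemma~\ref{lps}. It is worth noting that, unlike in a dynamically coherent setting, this argument never requires an invariant foliation tangent to $\Ec$: the point $z$ produced by Lemma~\ref{lps} plays the role that the center-stable leaf through $x$ would otherwise play, and its uniqueness is what makes the conclusion $z=y$ meaningful.
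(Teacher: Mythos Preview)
Your argument is correct, but it differs from the paper's in how the unstable piece is eliminated. The paper applies Lemma~\ref{lps} at \emph{every} time $n$, obtaining points $z_n=\gamma_n^s(1)$, and then invokes the \emph{uniqueness} clause of that lemma to conclude $f(z_n)=z_{n+1}$; once $\{z_n\}$ is known to be an $f$-orbit lying on the unstable leaf of $y_n$ and staying within $\epsilon_1$ of $y_n$ for all $n\in\Z$, the two-sided unstable separation forces $z_n=y_n$. You instead apply Lemma~\ref{lps} only at time $0$ and never use its uniqueness clause: you track $f^n\circ\gamma^c$ and $f^n\circ\gamma^s$ directly for $n\ge 0$, bound their lengths by $R/4$ via Corollary~\ref{ccont} and stable contraction, and then invoke the one-sided (forward) unstable separation available for short $\gamma^u$.

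Each route has its small price. Your approach requires arranging that \emph{all} forward images of a short center curve stay below $R/4$, which Corollary~\ref{ccont} only gives for $n>N$; your Lipschitz patching for $0\le n\le N$ handles this, but the constants must be chosen in the right order (fix an auxiliary input bound first, get $N$, then shrink $\epsilon$). The paper's approach sidesteps this bookkeeping at the cost of proving and using the uniqueness half of Lemma~\ref{lps}. One minor remark: your closing comment that ``its uniqueness is what makes the conclusion $z=y$ meaningful'' is not quite accurate---your argument shows that \emph{any} $z$ produced by the existence part must equal $y$, so uniqueness of $z$ is neither needed nor used in your proof.
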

\begin{proof}
    Let $\epsilon_1 > 0$ be small enough that for any two distinct points $p,q$
    on the same unstable leaf, there is $n \in \Z$ such that
    $d(f^n(p),f^n(q))>\epsilon_1$.  Let $\epsilon_2 > 0$ be small enough that
    \[
        \length \gamma < \epsilon_2  \quad \Rightarrow \quad  \length f \circ \gamma < \epsilon_1
    \]
    for any $C^1$ path in $\T^3$.  Let $\delta_1$ and $\delta_2$ be the
    corresponding constants given by Lemma \ref{lps}.  Set $\delta>0$ such that
    it satisfies the conditions of Lemma \ref{csexp} and is smaller than both
    $\delta_1$ and $\delta_2$.

    Now suppose $x_0,y_0 \in M$ are such that $d(x_n,y_n) < \delta$ for all $n
    \in Z$ where $x_n = f^n(x_0)$ and $y_n = f^n(y_0)$.  By Lemma \ref{lps},
    there are paths $\gamma_n^c$, $\gamma_n^s$, $\gamma_n^u$ of length at most
    $\min(\epsilon_1,\epsilon_2)$
    connecting $x_n$ and $y_n$.  Let $z_n$ denote $\gamma_n^s(1)$.  Then as
    $f \circ \gamma_n^c$, $f \circ \gamma_n^s$, $f \circ \gamma_n^u$ are paths of
    length at most $\epsilon_1$ connecting $f(x_n)=x_{n+1}$ to
    $f(y_n)=y_{n+1}$, it follows by uniqueness, that $f(z_n) = z_{n+1}$ for
    all $n \in \Z$.

    By the choice of $\epsilon_1$ at the start of the proof, $z_n = y_n$ for all
    $n$, and then by Lemma \ref{csexp} and the choice of $\delta$, $x_n = y_n$
    for all $n$, proving expansiveness.
\end{proof}

\section{Higher dimensions} \label{highdim} 

We now prove a version of Theorem \ref{mainthm} in higher dimensions under
additional assumptions.
For the motivation behind these assumptions, see \cite{ham-thesis}.

\begin{thm} \label{highdimthm}
    Suppose $f:\Td \to \Td$ is a conservative partially hyperbolic $C^2$
    diffeomorphism homotopic to a linear Anosov map $A:\Td \to \Td$.
    Further, suppose
    \begin{itemize}
        \item $f$ is absolutely partially hyperbolic,
        \item $f$ has one-dimensional center, and
        \item the foliations $\Wu$ and $\Ws$ of $f$ are quasi-isometric.
    \end{itemize}
    If $f$ is not ergodic, it is topologically conjugate to $A$.
\end{thm}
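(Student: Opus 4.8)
The plan is to run the proof of Theorem~\ref{mainthm} almost verbatim, replacing each three-dimensional ingredient by a higher-dimensional substitute furnished by the three extra hypotheses. First, since $f$ is conservative, has one-dimensional center and is not ergodic, Theorem~\ref{accerg} shows $f$ is not accessible. Absolute partial hyperbolicity, one-dimensional center, and quasi-isometry of $\Wu$ and $\Ws$ are precisely the hypotheses under which the classification of~\cite{ham-thesis} applies, so $f$ is dynamically coherent, each center leaf is a line dense in $\Td$, and $f$ is leaf conjugate to $A$; and since $f$ is homotopic to $A$ there is the classical semiconjugacy $h\colon\Td\to\Td$, homotopic to the identity, with $h\circ f=A\circ h$. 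In Section~\ref{mainproof} the conjugacy to $A$ came at the end from Vieitez's theorem, which is genuinely three-dimensional; here the endgame will instead be to prove that $h$ is injective.

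Second, I would produce an $f$-invariant foliation $\Wus$ tangent to $\Eu\oplus\Es$. Non-accessibility, through the structure of accessibility classes in the one-dimensional-center setting~\cite{RHRHU-accessibility}, yields a non-empty closed $f$-invariant union $\Lambda$ of leaves tangent to $\Eu\oplus\Es$, and by Proposition~\ref{subsetdynamics} every boundary component of $\Lambda$ is a leaf with Anosov dynamics and dense periodic points. Here quasi-isometry gives a useful simplification: any injectively immersed submanifold tangent to $\Eu\oplus\Es$ lifts to a submanifold within bounded Hausdorff distance of an affine translate of a fixed $A$-invariant hyperplane $H$, and since $A$ is Anosov $H$ is irrational --- an $A$-invariant rational hyperplane would make $A$ act as $\pm1$ on a rank-one quotient of $\Z^d$ --- so $H\cap\Z^d=0$ and every such submanifold is simply connected. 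Then, arguing as in Propositions~\ref{usint} and~\ref{usmin}, I would conclude that $\Lambda=\Td$, that $\Eu\oplus\Es$ integrates to the foliation $\Wus$, and that $\Wus$ is minimal --- all of this once one rules out the boundary leaves of $\Lambda$, which are now simply connected $(d-1)$-manifolds carrying Anosov dynamics with dense periodic points. For $d=3$ this exclusion is the trapping-region argument of Lemma~\ref{noad}; for $d\ge4$ it requires a new argument, and I expect it to be the main obstacle in constructing $\Wus$.

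Third, the measure-theoretic core transcribes word for word. As $\pi_1(\Td)\cong\Z^d$ has polynomial growth and $\Wus$ is Reebless --- lifting to $\Rd$ and using quasi-isometry shows $\Wus$ has no null-homotopic closed transversal --- Proposition~\ref{muexists} gives a holonomy invariant measure $\mu$ with $\supp\mu=\Td$; by Theorem~\ref{hhmeasure} it is unique up to a constant, so $f^*\mu=\lambda\mu$, and $\pm\lambda$ is an eigenvalue of the hyperbolic map induced by $f$ on $H^1(\Td,\R)\cong\R^d$, whence $\lambda\ne1$; replacing $f$ by $f\inv$ we take $\lambda<1$. The two length-versus-measure comparison lemmas and Corollary~\ref{ccont} use only transversality of $\Ec$ and $\Wus$ and the full support of $\mu$, so they carry over unchanged, and $f$ is uniformly topologically contracting along $\Wc$ --- equivalently, $f\inv$ is uniformly topologically expanding along $\Wc$.

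Finally, in place of expansiveness and Vieitez's theorem I would show $h$ injective, hence a conjugacy. Using quasi-isometry, $h$ is injective along each leaf of $\Wu$ and of $\Ws$: two distinct points on a common unstable leaf are driven apart in $\Td$ under forward iteration while their $h$-images remain equal, contradicting the boundedness of $\tilde h-\id$ on $\Rd$, and symmetrically for $\Ws$ under backward iteration. Combined with dynamical coherence this should force each point-inverse $h\inv(z)$ to be a compact arc contained in a single center leaf. Since $h\circ f^{-n}=A^{-n}\circ h$, these arcs have uniformly bounded diameter, hence --- center leaves being uniformly quasi-isometrically embedded under the present hypotheses --- the curves $f^{-n}(h\inv(z))$ have length bounded independently of $n$, so Corollary~\ref{ccont} forces every $h\inv(z)$ to have length zero, and $h$ is a homeomorphism conjugating $f$ to $A$. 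Besides the leaf-topology exclusion above, the delicate point here is the claim that the point-inverses of $h$ are arcs of center leaves --- this is the transfer to $\Td$ of the well-understood one-dimensional-center description of the semiconjugacy on $\T^3$, and it is where dynamical coherence and the quasi-isometry hypothesis do the essential work; of the two, I regard the leaf-topology exclusion for $d\ge4$ as the more serious obstacle.
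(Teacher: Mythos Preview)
Your overall architecture matches the paper's: rule out boundary leaves with Anosov dynamics, get a minimal $us$-foliation, repeat the Plante--measure argument to obtain topological contraction along $\Wc$, and then upgrade the Franks semiconjugacy to a conjugacy. The endgame is essentially the same as the paper's, though the paper phrases it more cleanly via Global Product Structure: on $\Rd$ any two points are joined by a unique $u$--$c$--$s$ chain, and quasi-isometry of $\Wu$, $\Ws$, $\Wc$ together with the measure $\mu$ shows that distinct points have unbounded orbit separation, so $\tilde h$ is injective. This sidesteps having to argue separately that point-inverses are center arcs.

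The genuine gap is in your exclusion of Anosov boundary leaves. You claim that the $A$-invariant hyperplane $H=\Es_A\oplus\Eu_A$ satisfies $H\cap\Zd=0$, reasoning that if $H$ were rational then $A$ would act as $\pm1$ on a rank-one quotient. But ``$H$ not rational'' only means $\operatorname{rank}(H\cap\Zd)<d-1$; it does \emph{not} force $H\cap\Zd=0$. For instance, take $A=B\oplus C$ on $\T^2\times\T^2$ with $B,C$ hyperbolic and eigenvalues $\lambda>\mu>1>\mu^{-1}>\lambda^{-1}$; with $\Ec_A$ the $\mu$-eigenspace, one has $H\supset\R^2\times 0$ and hence $H\cap\Z^4\supset\Z^2\times 0$. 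So the putative Anosov leaf need not be simply connected, and neither the $d=3$ trapping-region argument nor any topological argument based on simple connectivity is available.

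The paper's substitute (its Proposition~\ref{noanosov}) does not use the topology of $S$ at all. Using the pseudoleaf structure and the leaf conjugacy $h$ from \cite{ham-thesis}, it builds a genuine conjugacy $H:V\to\tilde S$ between $A|_V$ and $\tilde f|_{\tilde S}$, where $V=\Es_A\oplus\Eu_A$. One checks that translations of $\tilde S$ by $z\in\Zd$ force $z\in V$, and that $f$-periodic points on $S$ correspond under $H$ to $A$-periodic points on $P(V)\cong\T^j\times\R^k$. Density of periodic points then forces $k=0$, so $V$ is rational and $A|_V$ is a toral automorphism of $\T^{d-1}$ with determinant $\pm1$; since $\det A=\pm1$ as well, the remaining (center) eigenvalue of $A$ is $\pm1$, contradicting hyperbolicity. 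This determinant argument is the key idea you are missing.
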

Here, a foliation $W$ on a manifold $M$ is \emph{quasi-isometric} if, after
lifting to the universal cover $\tilde M$, there is a constant $Q$ such that
$d_{\tilde W}(x,y) < Q d_{\tilde M}(x,y) + Q$
for all $x$ and $y$ on the same leaf of the lifted foliation $\tilde W$.

For the remainder of this section, assume $f$ satisfies the assumptions of
Theorem \ref{highdimthm} and is not ergodic.

\begin{prop} \label{noanosov}
    No submanifold $S$ tangent to $\Eu \oplus \Es$ has Anosov dynamics with dense
    periodic points.
\end{prop}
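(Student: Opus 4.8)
The plan is to follow the strategy of Lemma~\ref{noad}, replacing the topological input ``$S$ is a plane'' (available in dimension three) by the quasi-isometry hypothesis, which through the structural results of \cite{ham-thesis} constrains the geometry of $S$ in the universal cover. Suppose for contradiction that such an $S$ exists. Replacing $f$ by a power --- still conservative, absolutely partially hyperbolic with one-dimensional center, with $\Wu$ and $\Ws$ quasi-isometric, and homotopic to the linear Anosov map $A^k$ --- I may assume that $f(S)=S$ and that $f$ fixes a point $x_0 \in \Per(f|_S)$. Since $\Eu \subset TS$ along $S$ and $\Wu$ uniquely integrates $\Eu$, the unstable manifold of $f|_S$ through $x_0$ is the leaf $\Wu(x_0)$, which by Proposition~\ref{udense} is dense in $S$; likewise $\Ws(x_0)$ is dense in $S$. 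So $S$ is saturated by leaves of $\Wu$ and $\Ws$ and carries the local product structure of an Anosov system.

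Next I would pass to the universal cover $\R^d$, using the lift $\tilde f$ fixing a lift $\tilde x_0$ of $x_0$ and the component $\tilde S$ of the preimage of $S$ through $\tilde x_0$. Because $f$ is homotopic to the hyperbolic automorphism $A$ and has one-dimensional center, the partially hyperbolic splitting of $f$ corresponds to a partition of the eigenvalues of $A$ in which the center bundle matches a single real eigenvalue $\lambda_c$ with $|\lambda_c| \neq 1$; let $E^c_A$ be its one-dimensional eigenspace and $P$ the complementary, codimension-one, $A$-invariant subspace of $\R^d$ spanned by the remaining generalized eigenspaces. Using that $\Wu$ and $\Ws$ are quasi-isometric, the analysis of \cite{ham-thesis} shows that each leaf of $\tilde\Wu$ lies a bounded Hausdorff distance from a translate of the subspace of $P$ corresponding to $\Eu$, and each leaf of $\tilde\Ws$ from a translate of the subspace corresponding to $\Es$. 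As $\tilde S$ is assembled from such leaves emanating from $\tilde x_0$, it then lies a bounded Hausdorff distance from the affine plane $\tilde x_0 + P$, and the projection of $\tilde S$ onto that plane along $E^c_A$ is a homeomorphism.

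The contradiction is then linear algebra. Let $H \leq \Z^d$ be the deck-transformation stabilizer of $\tilde S$; it equals the image of $\pi_1(S) \to \pi_1(\Td)$, it is $A$-invariant since $\tilde f(\tilde S) = \tilde S$, and as it acts by translations on $\tilde S$ --- hence on $P$ --- the subspace $H \otimes \R$ is an $A$-invariant \emph{rational} subspace of $P$. Via the homeomorphism of the previous paragraph, $f|_S$ is identified with a bounded perturbation of the descent of the hyperbolic linear automorphism $A|_P$ to $P/H$, and hence, by structural stability of hyperbolic automorphisms under bounded perturbations, is topologically conjugate to that descent. The periodic points of the descent of $A|_P$ lie in the closed subtorus $(H \otimes \R)/H$ of $P/H$, since $A|_P$ is hyperbolic and so has $0$ as its only periodic point in the quotient $P/(H\otimes\R)$. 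Thus density of $\Per(f|_S)$ in $S$ would force $(H\otimes\R)/H = P/H$, i.e.\ $H \otimes \R = P$, i.e.\ $P$ is rational; but then $A$ preserves the rank-$(d-1)$ lattice $P \cap \Z^d$, its characteristic polynomial acquires a monic integral linear factor, and so $A$ has an eigenvalue $\pm 1$ --- contradicting that $A$ is Anosov.

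I expect the main obstacle to be the structural step of the second paragraph --- extracting from the quasi-isometry hypothesis that $\tilde S$ lies a bounded distance from, and is a graph over, the codimension-one eigenspace $P$. This is where the hypotheses of Theorem~\ref{highdimthm} are genuinely used; it is the higher-dimensional substitute for the simple ``$S$ is a plane'' observation in Lemma~\ref{noad}, and once it is in hand the contradiction is purely a matter of the linear algebra of $A$-invariant rational subspaces rather than of any trapping-region construction.
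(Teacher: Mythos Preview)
Your plan is correct and lands on exactly the same contradiction as the paper: the stabilizer $H$ of $\tilde S$ lies in the codimension-one $A$-invariant eigenspace (the paper calls it $V$, you call it $P$), dense periodic points force $H$ to have full rank $d-1$ in $V$, so $V$ is rational, whence $A$ has an integer eigenvalue $\pm 1$.

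The difference is in the technical realization of the structural step you flag as the main obstacle. The paper does not argue via bounded Hausdorff distance and projection along $E^c_A$; instead it invokes directly the \emph{pseudoleaf} machinery of \cite{ham-thesis}: the set $\bigcup_{y\in \Wu(\tilde x_0)}\Ws(y)$ is shown there to be a complete, properly embedded hyperplane meeting every center leaf of $\tilde f$ exactly once, and completeness forces it to coincide with $\tilde S$. The paper then uses the \emph{leaf conjugacy} $h$ from \cite{ham-thesis} to build an explicit homeomorphism $H:V\to\tilde S$ by sliding along center leaves, checks by hand that $H$ conjugates $A|_V$ to $\tilde f|_{\tilde S}$, and shows directly that $f$-periodic points in $S$ pull back to $A$-periodic points in $P(V)$. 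Your route --- project to $P$ and invoke structural stability of the hyperbolic linear map $A|_P$ under bounded perturbations to get a conjugacy on $P/H$ --- reaches the same conclusion; note that this works even when $P/H\cong\bbT^j\times\bbR^k$ is non-compact, since the Franks-type conjugacy argument only needs $A|_P$ hyperbolic and the perturbation bounded, and in any case a semiconjugacy already suffices to carry periodic points to periodic points.

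Two minor points. First, your opening paragraph's appeal to Proposition~\ref{udense} (density of $\Wu(x_0)$ in $S$) is never used downstream --- it is a vestige of the three-dimensional trapping-region argument of Lemma~\ref{noad}, which you correctly abandon. Second, ``$\tilde S$ is assembled from such leaves emanating from $\tilde x_0$'' hides precisely the completeness issue the pseudoleaf theory resolves: a priori $\bigcup_{y\in\Wu(\tilde x_0)}\Ws(y)$ could be a proper subset of $\tilde S$, and bounded Hausdorff distance to $P$ alone does not give that the projection is a homeomorphism.
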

\begin{proof}
    Let $\tilde S \subset \Rd$ be a connected component of $P \inv(S)$ where $P:\Rd
    \to \Td$ is the universal covering.  There is a lift $\tilde f:\Rd \to \Rd$ of
    $f$ such that ${\tilde f}^k(\tilde S) = \tilde S$.  For simplicity, we assume
    $k=1$.

    For $x \in \Rd$, consider the set
    $\bigcup_{y \in \Wu(x)} \Ws(y)$.
    As shown in \cite{ham-thesis}, this set, called the \emph{pseudoleaf}
    through $x$, is a
    complete properly-embedded topological hyperplane in $\Rd$, which
    intersects every center leaf exactly once.  If $x$ is in
    $\tilde S$, then the pseudoleaf through $x$ must be contained in $\tilde S$,
    and then, by virtue of completeness, the two submanifolds must coincide.
    In particular, every center leaf intersects $\tilde S$ exactly once.

    Let $A:\Rd \to \Rd$ be the hyperbolic linear map to which the system is
    homotopic.  That is, the unique linear map $A$ such that
    $\|A(x) - \tilde f(x)\|$ is bounded for all $x \in \Rd$.
    Let $h:\Rd \to \Rd$ be a leaf conjugacy from $A$ to $\tilde f$ as constructed
    in \cite{ham-thesis}.  That is, $A$ may be viewed as partially hyperbolic
    and $h$ is a homeomorphism such that for any center leaf $\Lf$ of $A$,
    $h(\Lf)$ is a center leaf of $\tilde f$ and
    \[
        \tilde f h(\Lf) = h A (\Lf).
    \]
    Further, $h$ satisfies the relation $h(x+z)=h(x)+z$ for all $x \in \Rd$ and
    $z \in \Zd$.

    Let $V \subset \Rd$ denote the codimension one subspace of $\Rd$ spanned by
    the stable and unstable subspaces of the partially hyperbolic splitting of
    $A$.  Then, $A(V) = V$ and each center leaf of $A$ intersects $V$ in
    exactly one point.

    Suppose $z \in \Zd$ is such that $\tilde S = \tilde S + z$.
    Then
    $x + k z \in \tilde S$ for any $x \in \tilde S$ and $k \in \bbZ$.
    As $\tilde S$ is a $us$-pseudoleaf, \cite[Corollary 2.9]{ham-thesis}
    implies that if $z \ne 0$ then the sequence
    \[
        \normalize{(x + k z) - x} = \normalize{z}
    \]
    tends to $\Es_A \oplus \Eu_A$ as $k \to \infty$.
    In other words, $z \in V$ and $V = V + z$.

    Define $H: V \to \tilde S$ by requiring
    $H(\Lf \cap V)=h(\Lf) \cap \tilde S$ for each center leaf $\Lf$ of $A$.
    This uniquely determines $H$ and one can verify that it is a true
    conjugacy, a homeomorphism such that
    \[
        \tilde f H(x) = H A (x)
    \]
    for all $x \in V$.


    Suppose $x \in \tilde S$ projects to a periodic point $P(x)=f^k(P(x))$.
    Then, there is $z \in \Zd$ such that $x+z=\tilde f^k(x) \in \tilde S$.
    Let $y \in V$ be such that $H(y)=x$.  Let $\Lf$ be the center leaf through
    $y$.  Note that
    \[
        h A^k(\Lf) = \tilde f^k h(\Lf) = h(\Lf) + z = h(\Lf + z)
    \]
    and as $h$ is a homeomorphism, $A^k(\Lf) = \Lf + z$.  Using that $V=V+z$,
    \[
        A^k(\Lf \cap V) = A^k(\Lf) \cap A^k(V) = (\Lf + z) \cap (V + z)
    \]
    and therefore $A^k(y) = y+z$.

    We have shown that if $x \in \tilde S$ projects to an $f$-periodic point on
    $S$, then $y = H \inv(x)$ projects to an $A$-periodic point on $P(V)$.
    This implies that $A$ has dense periodic points on the invariant
    submanifold $P(V) \subset \Td$.

    The submanifold $P(V)$ can be viewed as $\T^j \times \R^k$ where $j$ is
    the rank of $\Zd \cap V$ and $j+k=d-1$.
    Further, $A|_{P(V)}$ can be written as
    \[
        (x,y) \mapsto (A_1(x) + B(y), A_2(y))
    \]
    where $A_1:\T^j \to \T^j$ is a hyperbolic toral automorphism,
    $A_2:\R^k \to \R^k$ is a hyperbolic linear map, and $B: \R^k \to \T^j$ is
    linear.

    The only way this map can have dense periodic points is if $A_2$ is
    trivial.  That is, $k=0$ and $A|_{P(V)} = A_1$.
    As both $A$ and $A_1$ are linear toral automorphisms, they have
    determinants equal to $\pm 1$ when viewed as linear maps.
    As the eigenvalues of $A_1$ consist of all but one of the eigenvalues of
    $A$, it would imply that the remaining eigenvalue is $\pm 1$, contradicting
    the standing assumption that $A$ is hyperbolic.
\end{proof}
\begin{prop}
    $\Eu \oplus \Es$ is uniquely integrable.
\end{prop}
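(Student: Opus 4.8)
The plan is to use non-ergodicity to pin down the structure of the accessibility classes, extract from them a foliation tangent to $\Eu \oplus \Es$, and then promote this to unique integrability by a broken-path approximation carried out in the universal cover. Since $f$ has one-dimensional center and is not ergodic, Theorem \ref{accerg} shows that $f$ is not accessible. By the structure theory for accessibility classes with one-dimensional center bundle \cite{RHRHU-accessibility} (the same mechanism underlying Theorem \ref{nonacc}), the set $\Gamma$ of points whose accessibility class is \emph{not} open is non-empty, closed, $f$-invariant, and laminated by $C^1$ injectively immersed codimension-one submanifolds tangent to $\Eu \oplus \Es$. Applying Proposition \ref{subsetdynamics} to $\Gamma$, every component of $\partial \Gamma$ has Anosov dynamics with dense periodic points, which Proposition \ref{noanosov} forbids. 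Hence $\partial \Gamma = \varnothing$, and since $\Td$ is connected, $\Gamma = \Td$: every accessibility class is then a codimension-one $C^1$ submanifold tangent to $\Eu \oplus \Es$, and these submanifolds fit together into a continuous foliation, which I denote $\Wus$.

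Next I would lift to the universal cover $P : \Rd \to \Td$ and match the leaves of $\widetilde{\Wus}$ with the $us$-pseudoleaves of \cite{ham-thesis}. Fix $x \in \Rd$ and let $\tilde L$ be the component of $P\inv(L)$ through $x$, where $L$ is the accessibility class of $P(x)$; then $\tilde L$ is $su$-saturated, so it contains the pseudoleaf $\mathcal{P}(x) = \bigcup_{y \in \Wu(x)} \Ws(y)$. Since $\mathcal{P}(x)$ is a complete, properly embedded topological hyperplane and $\tilde L$ is a connected codimension-one submanifold containing it, invariance of domain forces the two to coincide; the symmetric argument (using the dual pseudoleaf $\bigcup_{y \in \Ws(x)} \Wu(y)$, which has the same properties) shows that this set is also $\Wu$-saturated. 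Thus every leaf of $\widetilde{\Wus}$ is simultaneously $\Wu$- and $\Ws$-saturated, two leaves sharing a point are both equal to the pseudoleaf through that point (so distinct leaves are disjoint), and, together with the center foliation $\Wc$ supplied by \cite{ham-thesis}, these leaves give a global product structure on $\Rd$.

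Finally, let $\gamma : [0,1] \to \Td$ be a $C^1$ curve tangent to $\Eu \oplus \Es$ and lift it to $\tilde\gamma$ in $\Rd$; it remains to show that $\tilde\gamma$ stays in $\mathcal{P}(\tilde\gamma(0))$. The difficulty is that $\Wus$ is only transversally continuous, so one cannot simply differentiate a transverse coordinate along $\gamma$. Instead I would approximate $\tilde\gamma$ by broken $us$-paths: partition $[0,1]$ finely and, at each stage, replace a short arc of $\tilde\gamma$ starting at the current endpoint by an unstable arc followed by a stable arc, of arclengths matching the $\Eu$- and $\Es$-components of $\tilde\gamma'(t_i)$. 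A Gr\"onwall-type estimate, using a modulus of continuity for the bundles together with $\tilde\gamma \in C^1$, shows that the endpoints of these broken paths converge to $\tilde\gamma(1)$ as the mesh goes to zero; since every broken $us$-path starting at $\tilde\gamma(0)$ is confined to the single $su$-saturated set $\mathcal{P}(\tilde\gamma(0))$, which is closed in $\Rd$, the limit point $\tilde\gamma(1)$ lies in $\mathcal{P}(\tilde\gamma(0))$, and the same applies to every $\tilde\gamma(t)$. Therefore $\gamma$ is contained in a single leaf of $\Wus$, and $\Eu \oplus \Es$ is uniquely integrable. I expect the convergence of this polygonal scheme — essentially the statement that Euler approximations built from a merely continuous integrable distribution track the prescribed tangent curve — to be the main technical point; everything before it is a direct assembly of the cited structure theory with Propositions \ref{noanosov} and \ref{subsetdynamics}.
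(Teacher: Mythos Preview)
Your first paragraph \emph{is} the paper's proof: define $\Gamma(f)$, invoke the lamination result from \cite{RHRHU-accessibility}, apply Proposition \ref{subsetdynamics} to force $\partial\Gamma(f)=\varnothing$ via Proposition \ref{noanosov}, and conclude $\Gamma(f)=\Td$. The paper stops there.

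Your paragraphs 2--3 go after a stronger statement than the paper intends or needs. Here ``uniquely integrable'' means only that there is a unique foliation tangent to $\Eu\oplus\Es$; the subsequent proposition uses nothing more than ``the unique foliation $\Wus$.'' Uniqueness in this sense follows by the same one-line argument the paper gives in the three-dimensional case (Proposition \ref{usint}): since $\Wu$ and $\Ws$ are uniquely integrable, any integral manifold of $\Eu\oplus\Es$ through $x$ is subfoliated by $\Wu$ and $\Ws$, hence contains the accessibility class $AC(x)$; as $AC(x)$ is already codimension one, the two coincide. No lift to $\Rd$, no pseudoleaf identification, and no broken-path approximation is required.

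As for the broken-path scheme itself: it is plausible but not clean as written. In the higher-dimensional setting $\dim\Eu$ and $\dim\Es$ exceed one, so ``arclengths matching the $\Eu$- and $\Es$-components of $\tilde\gamma'(t_i)$'' does not by itself specify a direction inside the leaf, and you would need to transport $\tilde\gamma'(t_i)\in E^u_{\tilde\gamma(t_i)}\oplus E^s_{\tilde\gamma(t_i)}$ to the broken-path endpoint $p_i$ before flowing. More seriously, your Gr\"onwall/Euler estimate needs a modulus-of-continuity bound on the bundles that actually sums to zero as the mesh shrinks; for merely continuous (not Lipschitz, not even H\"older) distributions this is exactly the regime where Euler schemes can fail to converge to the intended curve. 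Since the paper never needs the curve-level statement, you can safely drop paragraphs 2--3 and match the paper's short argument.
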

\begin{proof}
    Let $AC(x)$ denote the accessibility class of $x$, all points reachable
    from $x$ by taking a concatenation of paths, each tangent to $\Eu$ or to
    $\Es$.  Define
    \[
        \Gamma(f) = \{ x \in \Td : AC(x) \text{ is not open}\}.
    \]
    One can verify that $\Gamma(f)$ is closed and $f$-invariant, and if $f$ is
    not accessible then $\Gamma(f)  \ne  \varnothing$.  Somewhat less trivially,
    $\Gamma(f)$ is laminated by leaves tangent to $\Eu \oplus \Es$
    \cite{RHRHU-accessibility}.  By Proposition \ref{subsetdynamics}, any leaf
    of $\partial \Gamma(f)$ has Anosov dynamics with dense periodic points,
    therefore, by the previous proposition, $\partial \Gamma(f) = \varnothing$,
    and (using that $f$ is not accessible) $\Gamma(f)$ is all of $\Td$.
\end{proof}
\begin{prop}
    There is no null-homotopic loop transverse to $\Eu \oplus \Es$.
\end{prop}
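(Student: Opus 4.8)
The plan is to lift everything to the universal cover $\Rd$ and exploit the product-like structure of the lifted foliation $\widetilde{\Wus}$ established in \cite{ham-thesis}: I would show that its leaf space is a line, so that a null-homotopic transversal would project to a locally monotone loop in $\R$, which is impossible. Suppose, for contradiction, that $\gamma : S^1 \to \Td$ is a (piecewise $C^1$) null-homotopic loop transverse to $\Eu \oplus \Es$. Since $\gamma$ is null-homotopic, each lift $\tilde\gamma : S^1 \to \Rd$ is again a loop, and it is transverse to $\widetilde{\Wus}$.

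First I would record the structure of $\widetilde{\Wus}$. By the previous proposition $\Eu \oplus \Es$ is uniquely integrable, so the leaf of $\widetilde{\Wus}$ through a point $x$ coincides with the pseudoleaf $\bigcup_{y \in \Wu(x)} \Ws(y)$; by \cite{ham-thesis} this is a properly embedded topological hyperplane meeting every center leaf of $\tilde f$ in exactly one point, and the center leaves are homeomorphic to $\R$. Fix one center leaf $c$ of $\tilde f$. Then the map sending $p \in c$ to the leaf of $\widetilde{\Wus}$ through $p$ is a bijection from $c$ onto the leaf space $L = \Rd / \widetilde{\Wus}$. A standard argument with foliation charts — using that $c$ meets each leaf exactly once and crosses it transversally, so that the saturation of a small subarc of $c$ is open — shows this bijection and its inverse are continuous, so $L \cong \R$. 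Let $q : \Rd \to \R$ be the resulting quotient map.

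To finish, note that transversality of $\tilde\gamma$ to $\widetilde{\Wus}$ forces the coordinate of $\tilde\gamma$ transverse to the leaves to be strictly monotone in every foliation chart, so $q \circ \tilde\gamma : S^1 \to \R$ is locally strictly monotone. But $q \circ \tilde\gamma$ is continuous on the compact set $S^1$ and hence attains a maximum at some point $t_0$; in a neighborhood of $t_0$ it is strictly monotone, and a strictly monotone function on an interval has no interior extremum. This contradiction proves the proposition.

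The step I expect to be the main obstacle is verifying that the leaf space $L$ is genuinely homeomorphic to $\R$: a priori the leaf space of a codimension-one foliation can be a non-Hausdorff one-manifold, and ruling this out uses the global property that each leaf meets a fixed center leaf in exactly one point, not merely local transversality. This is precisely the point at which the higher-dimensional argument substitutes for the appeal to $C^0$ Novikov theory (Proposition \ref{reebless}) available in dimension three; everything after it is the elementary observation that $S^1$ admits no locally injective continuous map to $\R$.
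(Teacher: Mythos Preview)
Your proof is correct and follows the same route as the paper. The paper's own proof is a terse two sentences: it notes that every leaf of $\Wus$ on $\Rd$ is a properly embedded hyperplane meeting each center leaf exactly once, and then simply asserts that ``from this, one can see that there is no closed loop on $\Rd$ transverse to $\Wus$.'' Your leaf-space argument (projecting to a fixed center leaf and observing that the lifted loop would have to be locally monotone in $\R$) is exactly the standard way to unpack that assertion, and your identification of the Hausdorff issue as the place where the each-leaf-meets-$c$-exactly-once property is essential is on point.
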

\begin{proof}
    Consider the unique foliation $\Wus$ tangent to $\Eu \oplus \Es$ on the
    universal cover $\Rd$.
    As with $\tilde S$ in the proof of
    Proposition \ref{noanosov}, every leaf of $\Wus$ is a complete properly
    embedded topological hyperplane which intersects every center leaf exactly
    once.  From this, one can see that there is no closed loop on $\Rd$
    transverse to $\Wus$ and therefore no null-homotopic transverse loop on
    $\Td$.
\end{proof}
These last three propositions replace all of the techniques specific to the
three-dimensional case that were used in the previous section.  Therefore, we
may repeat the steps of the previous section to establish a holonomy
invariant measure and deduce all of the results of that section, up to and
including the fact that $f$ is expansive.  The result of Vieitez, however,
applies only in dimension three.  As such, we must establish the conjugacy
directly.

\begin{lemma}
    If $\tilde f:\Rd \to \Rd$ is a lift of $f$ to the universal cover, then for
    points $x,y \in \Rd$, $x=y$ if and only if the sequence
    $\|\tilde f^n(x) - \tilde f^n(y)\|$ is bounded for all $n \in \Z$.
\end{lemma}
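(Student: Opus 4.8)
The plan is to show this expansiveness-type statement by the same strategy used in the three-dimensional argument (Lemmas \ref{csexp} and \ref{lps} and the expansiveness proposition), but transferred to the universal cover where the foliations $\tilde\Ws$, $\tilde\Wu$ are genuine global product structures on the leaves and where quasi-isometry controls leaf geometry. First I would set up the lifted objects: the lift $\tilde f$, the lifted foliations $\widetilde{\Wus}$, $\tilde\Wu$, $\tilde\Ws$, and the center foliation $\Wc$ coming from unique integrability of $\Eu\oplus\Es$ combined with the pseudoleaf structure of \cite{ham-thesis}. Recall from the proof of Proposition \ref{noanosov} that every $\widetilde{\Wus}$-leaf is a properly embedded topological hyperplane meeting each center leaf exactly once, so $\R^d$ is a product of a $us$-pseudoleaf and a center line; this is what replaces transversality arguments done locally in dimension three. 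The holonomy-invariant measure $\mu$ of full support and the relation $f^*\mu=\lambda\mu$ with $\lambda<1$ (after possibly replacing $f$ by $f^{-1}$) have already been established in this section exactly as before, and Corollary \ref{ccont} — topological contraction of center curves under forward iteration — holds verbatim.

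The reverse implication is trivial ($x=y$ obviously gives bounded, in fact zero, orbit difference). For the forward implication, suppose $\|\tilde f^n(x)-\tilde f^n(y)\|\le C$ for all $n\in\Z$. The first step is to reduce to the case where $x$ and $y$ lie on a common center leaf. Using the product structure, write the path from $x$ to $y$ as a concatenation $\gamma^c\gamma^s\gamma^u$ (as in Lemma \ref{lps}, but now globally on $\R^d$): go along the center leaf of $x$ to the point where it meets the $us$-pseudoleaf through $y$, then within that pseudoleaf use the global product of $\tilde\Ws$ and $\tilde\Wu$ to connect to $y$. Let $p=\gamma^s(1)=\gamma^u(0)$ and $q$ be the $us$-pseudoleaf point on $x$'s center leaf, so $x,q$ share a center leaf, $q,p$ share a stable leaf, $p,y$ share an unstable leaf. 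The quasi-isometry hypothesis on $\Wu$ and $\Ws$, together with the uniform expansion/contraction of partial hyperbolicity, forces the stable and unstable leaf-distances $d_{\tilde\Ws}(q,p)$ and $d_{\tilde\Wu}(p,y)$ to be comparable to the ambient distances, hence bounded; then under forward and backward iteration the unstable segment is uniformly expanded (in leaf length, hence — by quasi-isometry — in ambient distance) unless it is degenerate, and likewise the stable segment backward. Since the total orbit displacement stays bounded, both the stable and unstable segments must have zero length: $q=p=y$. This is exactly the uniqueness mechanism of Lemma \ref{lps}, run globally rather than locally.

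It remains to handle the center segment, i.e. the case $x,y$ on the same center leaf $\Lf$ with $\|\tilde f^n(x)-\tilde f^n(y)\|$ bounded. Here I would invoke Corollary \ref{ccont}: forward iterates of the center subcurve of $\Lf$ between $x$ and $y$ have length tending to zero, while the analogous statement for $f^{-1}$ (which we get because $\lambda<1$ means $\lambda^{-1}>1$, so $\mu$ grows and the two Lemmas preceding Corollary \ref{ccont} force the length to infinity) shows backward iterates have length tending to infinity. Quasi-isometry of $\Wu$ and $\Ws$ is not directly about center leaves, but one still has that a center leaf, being a line meeting each $us$-leaf once and being the $h$-image of an $A$-center line, is itself quasi-isometrically embedded (this is part of the leaf-conjugacy package in \cite{ham-thesis}); thus unbounded center-leaf length under $\tilde f^{-n}$ translates to unbounded ambient distance $\|\tilde f^{-n}(x)-\tilde f^{-n}(y)\|$, contradicting boundedness unless $x=y$. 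Assembling the two reductions gives the claim.

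The main obstacle I anticipate is the center-leaf step: one must be careful that ``topological contraction in $\Ec$'' (a statement about curve \emph{length} via the measure $\mu$) upgrades to a statement about the \emph{ambient} distance between the two orbit points, and in the backward direction that unbounded center-leaf length genuinely yields unbounded ambient displacement. This is where the quasi-isometry hypothesis (and the corresponding control on center leaves from \cite{ham-thesis}) does essential work that was invisible in dimension three, where one could instead lean on local transversality and the expansiveness results of Vieitez. A secondary subtlety is making the global ``$\gamma^c\gamma^s\gamma^u$'' decomposition genuinely canonical on $\R^d$ — this follows from the fact that each $us$-pseudoleaf meets each center leaf in exactly one point, so the decomposition point $p$ is uniquely determined, but it should be stated carefully.
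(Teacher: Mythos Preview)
Your proposal is correct and follows essentially the same approach as the paper: use the Global Product Structure from \cite{ham-thesis} to split the pair $(x,y)$ into unstable, center, and stable pieces, eliminate the $u$- and $s$-pieces by combining the exponential rates of absolute partial hyperbolicity with the quasi-isometry of $\Wu$ and $\Ws$, and then handle the remaining center segment via the holonomy-invariant measure $\mu$ together with the quasi-isometry of $\Wc$ (also supplied by \cite{ham-thesis}). The only cosmetic difference is the order of the decomposition---the paper takes $p\in\Wu(x)$, $q\in\Wc(p)$, $y\in\Ws(q)$, which makes the rate-domination step (unstable dominates forward, stable dominates backward) symmetric and avoids invoking the center contraction of Corollary~\ref{ccont} when killing the $u$- and $s$-legs.
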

\begin{proof}
    The system has Global Product Structure \cite{ham-thesis}.  As a
    consequence, for $x,y \in \Rd$ there are unique points $p,q \in \Rd$ such that
    $p \in \Wu(x)$, $q \in \Wc(p)$ and $y \in \Ws(q)$.

    Suppose $x  \ne  p$.  Then $d_u(\tilde f^n(x),\tilde f^n(p))$ grows
    exponentially fast as $n \to \infty$, and, as $\Wu$ is
    quasi-isometric by assumption, $\|\tilde f^n(x) - \tilde f^n(p)\|$ grows at
    the same rate.  In particular, by the definition of absolute partial
    hyperbolicity, this rate of growth is faster than in the stable or center
    directions, and so $\|\tilde f^n(x) - \tilde f^n(y)\|$ tends to infinity as
    well.

    Hence, we may assume $x = p$ and, by the same logic for the stable
    direction, that $q = y$.  We have reduced to the case where $x$ and $y$
    lie on the same center leaf.  If $\gamma$ is the center curve connecting
    these points, then using a holonomy invariant measure $\mu$ as in the
    previous section, we can show that $\mu(\tilde f^n \circ \gamma)$ is unbounded
    for $n \in \Z$, and hence that $d_c(\tilde f^n(x), \tilde f^n(y))$ is
    unbounded as well. The center foliation is quasi-isometric
    \cite{ham-thesis}, from which the result follows.
\end{proof}
\begin{prop}
    $f$ is conjugate to Anosov.
\end{prop}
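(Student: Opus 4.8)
The plan is to upgrade the leaf conjugacy of \cite{ham-thesis} to a genuine conjugacy by producing the classical semiconjugacy from $A$ to $f$ and showing, by means of the preceding lemma, that it is injective. Fix a lift $\tilde f:\Rd\to\Rd$ of $f$, and let $A:\Rd\to\Rd$ denote as before the unique linear map with $C:=\sup_{x\in\Rd}\|\tilde f(x)-A(x)\|<\infty$. Since $A$ is hyperbolic, I would invoke the standard construction of Franks \cite{Franks1}: it produces a continuous map $\tilde G:\Rd\to\Rd$ with $\tilde G\circ\tilde f=A\circ\tilde G$, with $\tilde G(x+z)=\tilde G(x)+z$ for all $z\in\Zd$, and with $K:=\sup_{x}\|\tilde G(x)-x\|<\infty$; equivalently, writing $\tilde G=\id+u$, one solves the cohomological equation $u\circ\tilde f-Au=A-\tilde f$ for a bounded continuous $\Zd$-periodic $u$ by a contraction-mapping argument after splitting $\Rd$ into the expanding and contracting subspaces of $A$. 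In particular $\tilde G$ descends to a continuous surjection $G:\Td\to\Td$ homotopic to the identity with $G\circ f=A\circ G$. It is important here that this construction uses nothing about $f$ beyond the bound $C<\infty$; the hypothesis that $f$ be Anosov is not needed.

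The key step is to show $\tilde G$ is injective, and this is precisely where the preceding lemma is used. Iterating the relation $\tilde G\circ\tilde f=A\circ\tilde G$ (and using that $A$ and $\tilde f$ are invertible) gives $\tilde G\circ\tilde f^{n}=A^{n}\circ\tilde G$ for every $n\in\Z$. Hence if $\tilde G(x)=\tilde G(y)$, then $\tilde G(\tilde f^{n}x)=\tilde G(\tilde f^{n}y)$ for all $n$, so by the triangle inequality
\[
    \|\tilde f^{n}(x)-\tilde f^{n}(y)\|\le 2K\qquad\text{for all }n\in\Z
\]
(the term $\|\tilde G\tilde f^{n}(x)-\tilde G\tilde f^{n}(y)\|$ vanishes, and each of the two remaining terms is at most $K$). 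By the preceding lemma this forces $x=y$, so $\tilde G$ is injective.

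Granting injectivity, the rest is soft topology. The bound $\|\tilde G(x)-x\|\le K$ shows $\tilde G$ is proper, so as an injective continuous self-map of $\Rd$ it is open (invariance of domain) and closed, hence a homeomorphism of the connected space $\Rd$. Being $\Zd$-equivariant and injective it descends to a continuous bijection of $\Td$, which is a homeomorphism $G:\Td\to\Td$ by compactness; then $G\circ f=A\circ G$ gives $f=G\inv\circ A\circ G$, so $f$ is topologically conjugate to the linear Anosov diffeomorphism $A$, as claimed.

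I expect the only point requiring any care to be the input from the first paragraph: verifying that Franks' semiconjugacy genuinely exists in the merely partially hyperbolic setting. As indicated, this is routine, since the construction depends only on the bounded-distance property of the lift. If one nonetheless prefers to stay strictly within the framework built up in this section, the same map $\tilde G$ can be constructed directly from the leaf conjugacy $h$ of \cite{ham-thesis} by reparametrising inside each center leaf using the topological contraction in the center direction established above; injectivity is then checked exactly as in the key step, and nothing else changes.
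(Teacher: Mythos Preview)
Your argument is correct and follows essentially the same route as the paper: construct the Franks semiconjugacy $G$ homotopic to the identity, lift it to $\tilde G$ at bounded distance from $\id$, and use the preceding lemma to show that $\tilde G(x)=\tilde G(y)$ forces $\|\tilde f^n(x)-\tilde f^n(y)\|$ bounded and hence $x=y$. Your additional remarks (the explicit cohomological-equation description of $\tilde G$, the invariance-of-domain argument on $\Rd$, and the observation that Franks' construction needs only the bounded-distance property of the lift) are correct elaborations but not substantive departures from the paper's proof.
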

\begin{proof}
    As $f$ is homotopic to the hyperbolic toral automorphism $A$,
    there is a semi-conjugacy, a surjective map $h:\Td \to \Td$ such that
    $h f(x)=A h(x)$ for all $x \in \Td$ \cite{Franks1}. We may assume additionally
    that $h$ is homotopic to the identity, so that $f$, $h$, and $A$, lift to
    maps on the universal cover where
    $\tilde h \circ \tilde f = A \circ \tilde h$ and
    $\|\tilde h(x) - x\|$ is bounded for all $x \in \Rd$.

    Suppose $\tilde h(x) = \tilde h(y)$ for points $x,y \in \Rd$.  By the
    semi-conjugacy, $\tilde h \tilde f^n(x) = \tilde h \tilde f^n(y)$ and
    $\|\tilde f^n(x) - \tilde f^n(y)\|$ is bounded for all $n \in \Z$.
    By the previous lemma, $x=y$.  That is, the semiconjugacy $\tilde h$ on
    $\Rd$ is injective, and therefore the semiconjugacy $h$ on $\Td$ is
    injective, making it a true conjugacy between $f$ and $A$.
\end{proof}
\section{Full Conjugacy} \label{conjsec}  

Consider a linear automorphism $A:\T^3 \to \T^3$ with eigenvalues
$\lambda_s < \lambda_c < 1 < \lambda_u$
and the corresponding partially hyperbolic splitting.  A small perturbation
of $A$ will be both partially hyperbolic and topologically conjugate to $A$.
However, the conjugacy may not preserve all of the partially hyperbolic
structure.  If the perturbation $f$ is accessible, there is no way that the
conjugacy can map both $\Ws_f$ and $\Wu_f$ to the corresponding one-dimensional
foliations of $A$.  In the special case, that $f$ is not accessible, the
conjugacy is as good as possible, preserving all of the invariant foliations.

\begin{thm} \label{fullconj}
    Suppose $f:\T^3 \to \T^3$  is an absolutely partially hyperbolic
    diffeomorphism,
    $\Es_f \oplus \Eu_f$ is integrable, and $h:\T^3 \to \T^3$ is a conjugacy from
    $f$ to a linear hyperbolic toral automorphism $A:\T^3 \to \T^3$.  Then $A$
    is partially hyperbolic with a linear splitting such that
    $W^*_A = h(W^*_f)$
    for $* = c,s,u,cs,cu,us$.
\end{thm}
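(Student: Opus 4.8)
The plan is to lift everything to the universal cover $\R^3$ and show that $\tilde h$ carries each invariant foliation of $f$ onto the corresponding \emph{linear} foliation of $A$. The engine is a ``squeezing'' observation: a leaf whose lift stays at bounded distance from a translate of an $A$-invariant subspace must in fact coincide with that translate, because every transverse eigendirection of the hyperbolic map $A$ is pushed off to infinity under forward or backward iteration.

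\emph{The linear model.} The homeomorphism $h$ is homotopic to the automorphism $L\in GL_3(\Z)$ induced on $\pi_1(\T^3)$; choosing lifts appropriately (after adjusting the lift of $f$ by a covering translation) we get $\tilde h\circ\tilde f = A\circ\tilde h$ on $\R^3$, with $A$ also denoting the linear lift. The map $x\mapsto \tilde h(x)-Lx$ is $\Z^3$-periodic, hence bounded, and likewise for $\tilde h^{-1}$ and $L^{-1}$, so $\tilde h$ and $\tilde h^{-1}$ are uniformly continuous on $\R^3$. Descending to $\pi_1$, $Lf_{\mathrm{lin}}=AL$, so $A$ is conjugate over $\Z$ to the linearization $f_{\mathrm{lin}}$ of $f$; in particular $f$ is homotopic to a linear Anosov map. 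By Hammerlindl's classification of absolutely partially hyperbolic systems on $\T^3$ homotopic to Anosov \cite{ham-thesis} (cf.\ \cite{BBI2}), $f_{\mathrm{lin}}$, and therefore $A$, has three real eigenvalues $|\lambda_s|<|\lambda_c|<|\lambda_u|$ with $|\lambda_s|<1<|\lambda_u|$. Taking $\Es_A,\Ec_A,\Eu_A$ to be the corresponding eigenlines gives a linear splitting; choosing $\sigma\in(|\lambda_s|,\min(1,|\lambda_c|))$ and $\mu\in(\max(1,|\lambda_c|),|\lambda_u|)$ shows $A$ is absolutely partially hyperbolic. By \cite{BBI2} $f$ is dynamically coherent, so $\Wcs_f$, $\Wcu_f$, and $\Wc_f=\Wcs_f\cap\Wcu_f$ exist; with the hypothesis that $\Es_f\oplus\Eu_f$ integrates to a foliation $\Wus_f$, all six $W^*_f$ are defined.

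\emph{The squeezing step.} Let $V$ be a sum of eigenspaces of $f_{\mathrm{lin}}$ and $\F$ an $f$-invariant foliation whose lifted leaves all lie within one fixed Hausdorff distance of translates of $V$; by \cite{ham-thesis} this holds for $\F=\Ws_f,\Wu_f,\Wc_f,\Wcs_f,\Wcu_f$ with $V$ the matching eigenspace sum. Because $A=Lf_{\mathrm{lin}}L^{-1}$, the subspace $LV$ is the sum of the $A$-eigenspaces for the same eigenvalues. Given a leaf $\tilde G$ of $\tilde\F$, the sets $A^n(\tilde h(\tilde G))=\tilde h(\tilde f^n\tilde G)$ lie, uniformly in $n\in\Z$, within one fixed distance of translates of $LV$ (combining the bounded-distance hypothesis with $\sup_x\|\tilde h(x)-Lx\|<\infty$). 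Fix $w,p\in\tilde h(\tilde G)$ and write $w-p=v+u$ with $v\in LV$ and $u$ in the complementary eigenspace sum; then $\|A^nu\|$ is bounded over all $n\in\Z$. Since $A$ is hyperbolic, every eigenvalue occurring in $u$ has modulus $\ne1$: those of modulus $>1$ kill their component of $u$ as $n\to+\infty$, those of modulus $<1$ as $n\to-\infty$, so $u=0$ and $\tilde h(\tilde G)\subseteq LV+p$. As leaves of these foliations lift to properly embedded submanifolds of $\R^3$, $\tilde h(\tilde G)$ is closed in $LV+p$; it is also open there by invariance of domain; connectedness of $LV+p$ then forces $\tilde h(\tilde G)=LV+p$. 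Projecting to $\T^3$ yields $h(W^*_f)=W^*_A$ for $*=s,u,c,cs,cu$, where $W^*_A$ is the linear foliation tangent to the corresponding sum of eigenlines of $A$.

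\emph{The $us$-foliation, and the main obstacle.} Since $h(\Ws_f)=\Ws_A$ and $h(\Wu_f)=\Wu_A$, the conjugacy sends $su$-paths of $f$ to $su$-paths of $A$, hence accessibility classes to accessibility classes. Integrability makes the leaves of $\Wus_f$ the accessibility classes of $f$, while the accessibility class of $A$ through $q$ is exactly the linear leaf $(\Es_A\oplus\Eu_A)+q$; therefore $h(\Wus_f)=\Wus_A$, completing the proof modulo the cited results. The real content — and the step I expect to demand the most care — is the squeezing lemma: one must verify that the bounded-distance constant can be chosen \emph{uniformly over all leaves and over all forward and backward iterates} (this is precisely the uniformity in the quasi-isometry estimates of \cite{ham-thesis}), track the bookkeeping identifying $LV$ with the correct eigenspace sum of $A$, and exploit properness of the lifted leaves so that the open–closed argument applies even though the leaves are dense in $\T^3$.
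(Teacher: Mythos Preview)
Your argument is correct, and for $*=c,u,cs,cu$ it is essentially the paper's argument rephrased: the paper characterises membership in $\Wc_f$ (resp.\ $\Wu_f$, $\Wcs_f$) by two-sided growth bounds $C^{-1}\hat\gamma^n\le\|\tilde f^n(x)-\tilde f^n(y)\|\le C\gamma^n$, transfers these through the bounded-distance conjugacy $\tilde h$, and reads off the corresponding linear foliation of $A$; your squeezing of the transverse component is the same mechanism.

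Where you genuinely diverge is at $\Ws$. The paper does \emph{not} run the squeezing argument for the strong stable; instead it fixes a linear $cs$-leaf $L$ of $A$, builds a dense family of rigid translations $\tau_z:L\to L$ via the (already linearised) unstable holonomy, and observes that these translations preserve $h(\Ws_f)|_L$ precisely because $\Wu_A=h(\Wu_f)$ and $h(\Ws_f)$ subfoliate the integrated $h(\Wus_f)$. Homogeneity (\cite{repovs1996}) then forces $h(\Ws_f)|_L$ to be a $C^1$ linear foliation, hence $\Ws_A$. So the paper's route for $\Ws$ \emph{consumes} the integrability hypothesis $\Es_f\oplus\Eu_f$, whereas your squeezing gets $h(\Ws_f)=\Ws_A$ from the uniform-distance-to-linear estimates of \cite{ham-thesis} alone and only invokes integrability at the very end to make sense of $\Wus_f$. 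Your approach is more elementary (no Repov\v{s}--Skopenkov--\v{S}\v{c}epin homogeneity, no minimality of $\Wu_A$) and in fact proves the five cases $*=c,s,u,cs,cu$ under the weaker hypothesis that $f$ is merely absolutely partially hyperbolic and conjugate to $A$; the paper's translation argument, on the other hand, is of independent interest and would still work in settings where the strong stable leaves are not a priori known to shadow $\Es_A$ uniformly. For $\Wus$ you can shortcut the accessibility-class discussion: once $h(\Wu_f)=\Wu_A$ and $h(\Ws_f)=\Ws_A$, each $h(\Wus_f(x))$ is a connected $2$-manifold saturated by both linear foliations, hence a single leaf of $\Wus_A$.
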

\begin{remark}
    The following proof generalizes to the case of a $d$-dimensional torus,
    $d  \ge  3$, under the additional assumptions that $\Wu_f$ and $\Ws_f$ are
    quasi-isometric, and $\dim \Ec_f = 1$.  As such, we use $\Td$ in place of
    $\T^3$ throughout the proof.
\end{remark}
\begin{proof}
    Assume without loss of generality that $f_* = A_*$ as automorphisms
    of $\pi_1(\Td)$, and that $h$ is homotopic to the identity.  It follows
    from Section 2 of \cite{ham-thesis}, that $\Ec_f$ is quasi-isometric and
    that $f$ and $A$ are absolutely partially hyperbolic and with the same
    choice of constants in the definition.  In particular, there are constants
    $\gammahat < 1 < \gamma$ such that for points $x$ and $y$ on the universal
    cover,
    \begin{align*}
        y &\in \Wc_f(x)  \quad \Leftrightarrow \\  
        C \inv \gammahat^n  &\le  \|f^n(x) - f^n(y)\|  \le  C \gamma^n
        \quad \text{for some $C > 1$ and all $n \in \Z$}  \quad \Leftrightarrow \\  
        C \inv \gammahat^n  &\le  \|A^n(h(x)) - A^n(h(y))\|  \le  C \gamma^n
        \quad \text{for some $C > 1$ and all $n \in \Z$}  \quad \Leftrightarrow \\  
        h(y) &\in \Wc_A(h(x)),
    \end{align*}
    where, by abuse of notation, we use $f$, $A$, and $h$ to represent the
    lifted maps on $\Rd$.  We have shown $\Wc_A = h(\Wc_f)$.

    Assume, without loss of generality, that $A$ has a contracting center
    direction.  That is, $\Ec_A \oplus \Es_A$ corresponds to the
    stable direction of the Anosov splitting.
    Then, by considering
    $\|f^n(x) - f^n(y)\|$ as $n \to \infty$ or $n \to -\infty$, one shows that
    $\Wcs_A = h(\Wcs_f)$ and $\Wu_A = h(\Wu_f)$.  It remains to show $\Ws_A =
    h(\Ws_f)$.  On the universal cover, fix the leaf $L$ of $\Wcs_A$ passing
    through the origin.  For each $z \in \Z^d$, define the map $\tau_z:L \to L, x
    \mapsto \Wu_A(x+z) \cap L$.  Each $\tau_z$ is a translation on $L$.  The
    unstable foliation of an Anosov map on a torus $\Td$ is minimal
    \cite{Franks1}.  Therefore, the set of translations $\{\tau_z : z \in \Z^d
    \}$ is dense in the set of all rigid translations of $L$.

    As $\Wu_A=h(\Wu_f)$ and $h(\Ws_f)$ subfoliate $h(\Wus_f)$, one can verify that
    the translations $\tau_z$ preserve the restriction to $L$ of the foliation
    $h(\Ws_f)$.  By continuity of the foliation, \emph{every} rigid translation
    defined on $L$ preserves $h(\Ws_f)$.  Then, a leaf of $h(\Ws_f)$ is a
    locally compact, $C^1$-homogeneous subset of $L$, and is therefore a
    $C^1$-manifold \cite{repovs1996}.  The tangent spaces to the leaves are
    also invariant under every translation of $L$ and therefore the foliation
    is linear.

    Quotienting down to $\T^d$, the image of $L$ is dense and so $h(\Ws_f)$ is
    linear on all of $\T^d$.  As a linear invariant foliation, $h(\Ws_f)$ must
    equal $\Ws_A$.
\end{proof}
\section{The central Lyapunov exponent} \label{expt} 

A conservative diffeomorphism is \emph{weakly ergodic} if almost every point
has a dense orbit.

\begin{thm} \label{weak}
    If $f:\bbT^3 \to \bbT^3$ is a conservative partially hyperbolic $C^2$
    diffeomorphism homotopic to Anosov, then it is weakly ergodic.
\end{thm}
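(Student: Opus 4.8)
The plan is to prove that for $m$-almost every $x\in\T^3$ --- where $m$ is the $f$-invariant probability measure equivalent to Lebesgue --- the forward orbit of $x$ is dense. Recall that by the Birkhoff ergodic theorem and the ergodic decomposition, for $m$-a.e.\ $x$ the empirical measures $\frac1n\sum_{k=0}^{n-1}\delta_{f^kx}$ converge weak-$*$ to an ergodic $f$-invariant probability measure $m_x$, and that every basic open set with positive $m_x$-measure is visited by the orbit of $x$, so $\supp m_x\subseteq\overline{\{f^nx:n\ge0\}}$. Hence it suffices to show that $\supp m_x=\T^3$ for $m$-a.e.\ $x$; weak ergodicity then follows, since $m\sim\operatorname{Leb}$.

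If $f$ is ergodic this is immediate: $m_x=m$ for a.e.\ $x$, and $m$ has full support because it is equivalent to Lebesgue measure on the connected manifold $\T^3$. So suppose $f$ is not ergodic. Then $f$ is not accessible --- otherwise it would be ergodic by Theorem~\ref{accerg} --- so, after lifting to a finite cover to make the bundles $\Eu,\Ec,\Es$ orientable (which affects neither the hypotheses nor weak ergodicity, as the covering projection carries dense orbits to dense orbits), $f$ falls under the setting of Section~\ref{mainproof}. In particular, Proposition~\ref{usmin} shows that the foliation $\Wus$ tangent to $\Eu\oplus\Es$ is minimal: every one of its leaves is dense in $\T^3$.

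I would then carry out a Hopf-type argument. Since $f$ is $C^2$, the foliations $\Ws$ and $\Wu$ are absolutely continuous (Brin--Pesin, Pugh--Shub). For $m$-a.e.\ $x$ the forward and backward Birkhoff averages of each continuous function agree (both compute $E_m[\,\cdot\mid\mathcal I]$ for the $f$-invariant $\sigma$-algebra $\mathcal I$); the forward average is constant along $\Ws$-leaves and the backward average along $\Wu$-leaves, because the iterates of two stably --- respectively unstably --- related points converge. Combining this with the absolute continuity of $\Ws$ and $\Wu$ along a single $\Wus$-leaf (using the local product structure within the leaf and Fubini), one obtains: for $m$-a.e.\ $x$ the set $D_x:=\{\,y\in\Wus(x):m_y=m_x\,\}$ has full leaf-Lebesgue measure in the surface $\Wus(x)$. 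A full-measure subset of a manifold is dense in it, so $D_x$ is dense in $\Wus(x)$, hence dense in $\T^3$ by minimality of $\Wus$. Since each $y\in D_x$ lies in $\supp m_y=\supp m_x$, we conclude $\supp m_x\supseteq\overline{D_x}=\T^3$, which is what was needed.

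The main obstacle --- and the reason the theorem is not a formal consequence of Theorem~\ref{mainthm} --- is that one cannot simply transport ``almost every orbit is dense'' from the linear model $A$ through the topological conjugacy, since that conjugacy need not be absolutely continuous. The two ingredients that together circumvent this are: the absolute continuity of $\Ws$ and $\Wu$ (available only because $f$ is $C^2$), which forces $x\mapsto m_x$ to be constant along $\Wus$-leaves up to a null set; and the \emph{topological} minimality of $\Wus$ from Proposition~\ref{usmin}, which --- via the elementary observation that a full-measure subset of a leaf is dense in that leaf, and the leaf is dense in $\T^3$ --- upgrades this to $\supp m_x=\T^3$, without needing any essential-accessibility statement (which is not known in this setting).
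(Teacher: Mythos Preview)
Your argument follows the same route as the paper's: both dispose of the ergodic case trivially, invoke Proposition~\ref{usmin} to get that the $\Wus$-leaves (which coincide with the accessibility classes) are dense when $f$ is not ergodic, and then run a Hopf-type argument to conclude that almost every orbit is dense---the paper citing this last step as \cite[Lemma~5]{bdp}, while you spell it out directly.

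One technical point deserves care. Your claim that $D_x$ has \emph{full leaf-Lebesgue measure} in $\Wus(x)$ tacitly uses absolute continuity of the $\Wus$-foliation (so that the full-$m$-measure set of Birkhoff-regular points meets $\Wus(x)$ in full leaf measure), and ``absolute continuity of $\Ws$ and $\Wu$ along a single $\Wus$-leaf'' is not a standard consequence of the ambient absolute continuity of $\Wu$ and $\Ws$. Fortunately you only use full leaf measure to deduce that $D_x$ is \emph{dense} in $\Wus(x)$, and density follows directly by iterating the ambient absolute continuity: for $m$-a.e.\ $x$, a full-measure (hence dense) set of $y\in\Ws(x)$ are regular with $m_y=m_x$, and for each such $y$ a full-measure (hence dense) set of $z\in\Wu(y)$ are regular with $m_z=m_y$; the local product structure of $\Wu$ and $\Ws$ inside $\Wus(x)$ then gives density in $\Wus(x)$. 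You should also fold the $m$-a.e.\ condition $y\in\supp m_y$ into the definition of $D_x$, since ``each $y\in D_x$ lies in $\supp m_y$'' is not automatic from $m_y=m_x$ alone.
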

\begin{proof}
    If $f$ is ergodic, it is weakly ergodic.
    If $f$ is not ergodic, all of the results of Section
    \ref{mainproof} hold.  In particular, each accessibility class is the leaf
    of a minimal foliation (Propositions \ref{usint} and \ref{usmin}) and is
    therefore dense.  It then follows from the work of Burns, Dolgopyat, and
    Pesin that almost every orbit is dense \cite[Lemma 5]{bdp}.
\end{proof}
\begin{thm}
    Suppose $f:\T^3 \to \T^3$ is a conservative partially hyperbolic $C^2$
    diffeomorphism homotopic to Anosov.   If $f$ is not ergodic, then the
    central Lyapunov exponent is zero almost everywhere.
\end{thm}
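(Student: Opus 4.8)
The plan is to combine the Oseledec theorem with the topological contraction of the centre direction established in Section~\ref{mainproof}. Throughout, $f$ is not ergodic, so all of that section is available: passing to the finite cover on which $\Eu,\Ec,\Es$ are orientable, there is an $f$-invariant \emph{minimal} foliation $\Wus$ tangent to $\Eu\oplus\Es$ with a full-support holonomy-invariant transverse measure $\mu$ satisfying $f^*\mu=\lambda\mu$; after replacing $f$ by $f\inv$ we may assume $\lambda<1$, and then Corollary~\ref{ccont} says positive-length centre curves shrink uniformly under $f^n$, and — applying the same argument to $f\inv$, whose scaling constant is $\lambda^{-1}>1$ — grow without bound under $f^{-n}$. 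Write $\lambda^c(x)=\lim_n\tfrac1n\log\|T_xf^n|_{\Ec}\|$, defined for $m$-a.e.\ $x$. I would prove $\lambda^c=0$ a.e.\ by establishing $\lambda^c\le 0$ and $\lambda^c\ge 0$ almost everywhere.

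For $\lambda^c\le 0$ a.e.: suppose the set $\{\lambda^c>0\}$ has positive $m$-measure. At a Lyapunov-regular point $x$ with $\lambda^c(x)>0$, both $\Eu_x$ and $\Ec_x$ lie in the expanding Oseledec subspace, so Pesin theory provides a two-dimensional local unstable manifold $W(x)$, tangent at $x$ to $\Eu_x\oplus\Ec_x$, on which $f\inv$ contracts exponentially. Since $\Ec$ is everywhere tangent to $W(x)$ along $W(x)$, restricting $\Ec$ to $W(x)$ gives a line field whose integral curve through $x$ is a centre curve $\gamma\subset W(x)$ of positive length. Then $\length(f^{-n}\gamma)\to 0$ because $\gamma$ lies in the $f\inv$-contracted manifold $W(x)$, whereas Corollary~\ref{ccont} forces $\length(f^{-n}\gamma)\to\infty$; this contradiction gives $\lambda^c\le 0$ a.e.

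The inequality $\lambda^c\ge 0$ a.e.\ is the main difficulty: the naive time-reversal is inconclusive, since a point with $\lambda^c<0$ carries a two-dimensional Pesin \emph{stable} manifold tangent to $\Es\oplus\Ec$, and the centre curve inside it shrinks under $f^n$ in perfect agreement with Corollary~\ref{ccont}, producing no contradiction. Here I would first observe that $\lambda^c$ is \emph{constant along the invariant foliations}: if $y\in\Wu(x)$ then $d(f^{-n}x,f^{-n}y)\to 0$ exponentially and, because $x\mapsto\log\|T_xf|_{\Ec}\|$ is H\"older, the backward Birkhoff sums at $x$ and at $y$ differ by a convergent series, so $\lambda^c(x)=\lambda^c(y)$; the same holds along $\Ws$, and hence along $\Wus$. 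Consequently every set $\{\lambda^c<t\}$ is $f$-invariant, measurable, and $\Wus$-saturated, and using that $\Wus$ is minimal and uniquely ergodic as a foliation (Theorem~\ref{hhmeasure}), together with weak ergodicity (Theorem~\ref{weak}, so that $m$-a.e.\ orbit is dense), I would conclude that such a set is $m$-null or $m$-conull. Hence $\lambda^c$ equals a single constant $c$ $m$-a.e., and $c\le 0$ by the previous step.

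It then remains to exclude $c<0$, and this is the heart of the matter. If $\lambda^c\equiv c<0$, the smooth invariant measure $m$ would be an SRB measure with one-dimensional unstable direction $\Eu$ and two-dimensional stable direction tangent to $\Es\oplus\Ec$; I would seek a contradiction by playing the $f$-invariance of the \emph{smooth} measure $m$ against the scaling $f^*\mu=\lambda\mu$ of the \emph{holonomy-invariant} measure — equivalently, by showing that a constant negative central exponent would force the disintegration of $m$ transverse to $\Wus$ to behave like a holonomy-invariant transverse measure rescaled by $\lambda$, hence (by the uniqueness in Theorem~\ref{hhmeasure}) to be proportional to $\mu$, making the conjugacy $h$ of Theorem~\ref{mainthm} — which sends $\Wus$ to the linear foliation tangent to $\Eu_A\oplus\Es_A$ — absolutely continuous along the centre, and thereby identifying $c$ with the central exponent of $A$; tracking which value of $c$ is actually consistent then leaves only $c=0$. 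Making this last reconciliation of the smooth measure $m$ with the possibly singular measure $\mu$ rigorous — confronting exactly the non-absolute-continuity of the conjugacy flagged in the introduction — is where I expect the genuine work of the proof to lie.
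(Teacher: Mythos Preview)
Your proposal is incomplete, and you acknowledge as much: the exclusion of the case $c<0$ is only a sketch, and the mechanism you propose --- forcing absolute continuity of the conjugacy along the centre by comparing the disintegration of $m$ with the holonomy-invariant transverse measure $\mu$ --- runs head-on into exactly the obstruction the introduction warns about (the conjugacy need not be absolutely continuous). No argument of that shape is known to work here. There is also a softer gap in your step~2: concluding that every measurable $\Wus$-saturated set is $m$-null or $m$-conull is precisely \emph{essential accessibility}, and this does not follow from minimality of $\Wus$, from Theorem~\ref{hhmeasure}, or from Theorem~\ref{weak}; those results concern the transverse holonomy-invariant measure and the density of orbits, not the Lebesgue class of saturated sets.

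The paper bypasses all of this with a one-paragraph contrapositive. If the central exponent is nonzero on a set of positive $m$-measure, then Theorem~1 of Burns--Dolgopyat--Pesin \cite{bdp} (whose hypotheses are met because every accessibility class is dense, by Propositions~\ref{usint} and~\ref{usmin}) produces an ergodic component $X$ that agrees, modulo a null set, with a nonempty open set $U$. By Theorem~\ref{weak} almost every orbit is dense, so the $f$-saturate of $U$ --- and hence $X$ --- has full measure, and $f$ is ergodic. There are no Pesin manifolds, no sign analysis of the constant $c$, and no attempt to compare $m$ with $\mu$; the single external input from \cite{bdp} does in one stroke the work your steps~2 and~3 try to do by hand.
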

\begin{proof}
    To prove the contrapositive, assume the central Lyapunov exponent is
    non-zero on a positive measure subset.
    By the work of Burns, Dolgopyat, and Pesin, there is an ergodic component
    of $f$ which is open mod zero \cite[Theorem 1]{bdp}.  That is, there is
    $X \subset \bbT^3$ invariant such that $f|_X$ is ergodic and $X$
    differs from a non-empty open set $U \subset \bbT^3$ by a set of measure
    zero.  By Theorem \ref{weak}, the $f$-saturate of $U$ has full measure.
    Then, $X$ also has full measure which means $f$ is ergodic.
\end{proof}
\section{3-normal hyperbolicity} \label{three-nh} 

Suppose $f:\bbT^3 \to \bbT^3$ is an absolutely partially hyperbolic
system satisfying the hypotheses of Theorem \ref{fullconj}.
Then, for a center leaf $\Wc(x_0)$ on the universal cover $\bbR^3$
and a deck translation $\tau:\bbR^3 \to \bbR^3$ in $\pi_1(\bbT^3)$, there is a
unique map $h_\tau: \Wc(x_0) \to \tau(\Wc(x_0))$ defined by
\[
    h_\tau(x) \in \tau(\Wc(x_0)) \cap \Wus(x).
\]
Since $\Wcs(x_0)$ and $\Wcu(\tau(x_0))$ intersect in a unique center leaf,
the map $h_\tau$ can be viewed as a stable holonomy inside a center-stable leaf
followed by an unstable holonomy inside a center-unstable leaf.  In the case
of a one-dimensional center, both of these holonomies are $C^1$ \cite{PSW}.
Therefore, $h_\tau$ is $C^1$.  Further, these maps define a free action of the
fundamental group on the center leaf{:}
\[
    \alpha: \pi_1(\bbT^3) \to \Diff^1(\Wc(x_0)),
    \quad \alpha(\tau) = \tau \inv \circ h_\tau.
\]
Regrettably, a $C^1$ action is not enough to prove ergodicity.
However, a $C^2$ action is.

\begin{lemma} \label{ergaction}
    If $\alpha(\tau)$ (as defined above) is $C^2$ for each $\tau \in \pi_1(\bbT^3)$,
    then $f$ is ergodic.
\end{lemma}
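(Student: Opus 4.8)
The plan is to deduce ergodicity by combining the $C^2$ group action $\alpha$ on the center leaf with the topological rigidity already established in this section, namely that $f$ is conjugate to the linear Anosov map $A$ and that the conjugacy $h$ carries every invariant foliation of $f$ to the corresponding linear foliation of $A$ (Theorem \ref{fullconj}). The key observation is that ergodicity of $f$ can be phrased as a statement about the quotient of the center leaf $\Wc(x_0)$ by the $\pi_1$-action: an $f$-invariant set, being $\Wus$-saturated (by absolute continuity of the stable and unstable holonomies, which are $C^1$ here), descends to an $\alpha(\pi_1)$-invariant measurable subset of $\Wc(x_0)$, and conversely. So it suffices to show that the action $\alpha$ of $\pi_1(\bbT^3) \cong \bbZ^3$ on the line $\Wc(x_0) \cong \bbR$ is ergodic with respect to the measure on the center leaf induced by the conditional measures of the volume.

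First I would set up this correspondence precisely. Using the holonomy-invariant measure $\mu$ from Section \ref{mainproof} and the disintegration of Lebesgue measure along the center foliation, one identifies the space of $\Wus$-saturated measurable sets with the space of $\alpha(\pi_1)$-invariant measurable subsets of $(\Wc(x_0), \nu)$ for an appropriate measure class $\nu$. Because $f$ is conjugate to $A$ and $h$ preserves the foliations, the action $\alpha$ is topologically conjugate (via $h$ restricted to the center leaf) to the corresponding linear action of $\bbZ^3$ on the center leaf of $A$, which is simply a dense group of translations of $\bbR$. A dense group of translations acting on $\bbR$ with Lebesgue measure is ergodic; the issue is that our measure $\nu$ is only known to be in the measure class of Lebesgue through a conjugacy that is a priori merely a homeomorphism, so $\nu$ need not be Lebesgue.

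This is where the $C^2$ hypothesis enters, and I expect it to be the main obstacle. The strategy is to invoke a Denjoy/Sacksteder-type argument: a $C^2$ (or $C^{1+\mathrm{bv}}$) action of a finitely generated group on a one-manifold, all of whose orbits are dense (equivalently, the action is minimal — which holds here because each accessibility class is dense, by Propositions \ref{usint} and \ref{usmin}), admits no exceptional minimal set, and moreover the action is \emph{ergodic} with respect to Lebesgue measure on the manifold. Concretely, one uses that for $C^2$ diffeomorphisms the Schwarzian/distortion control (the classical Denjoy estimate) prevents wandering intervals and forces the unique invariant measure class to be that of Lebesgue; combined with minimality this yields ergodicity of the action. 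Pulling this back through the leaf conjugacy $h$, and then back up to $\bbT^3$ via the correspondence of the first paragraph, gives that every $f$-invariant set has zero or full volume, i.e., $f$ is ergodic. The delicate point to get right is verifying that the measure $\nu$ appearing from the disintegration of volume is genuinely in the Lebesgue class on the center leaf and is the one to which the $C^2$-action ergodicity theorem applies — this requires the absolute continuity of the stable and unstable holonomies (available since $\dim \Ec = 1$, by \cite{PSW}) to transfer volume-class statements between the leaf and the ambient manifold.
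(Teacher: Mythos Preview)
Your reduction to the $\alpha$-action on $\Wc(x_0)$ is the right move and matches the paper, but the step where you invoke a ``Denjoy/Sacksteder-type argument'' to conclude Lebesgue-ergodicity of the action is a genuine gap. Denjoy and Sacksteder give \emph{topological} conclusions (no wandering intervals, no exceptional minimal set); they do not say that a minimal $C^2$ action is ergodic with respect to Lebesgue. Indeed, for a single $C^2$ circle diffeomorphism with irrational rotation number the unique invariant probability can be singular with respect to Lebesgue, so ``the unique invariant measure class is that of Lebesgue'' is false as stated. Lebesgue-ergodicity in that setting is a separate theorem (Herman), and you have not supplied an analogue for the $\bbZ^3$-action on the line. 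The ``delicate point'' you flag at the end is in fact the entire content of the lemma, and it is not resolved by the distortion estimates you cite.

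The paper closes this gap with a concrete trick that you are missing: pick $\tau_1,\tau_2\in\pi_1(\bbT^3)$ generating a copy of $\bbZ^2$, form the $C^2$ circle $C=\Wc(x_0)/\alpha(\tau_1)$, and observe that $\alpha(\tau_2)$ descends to a $C^2$ circle diffeomorphism of $C$. Freeness of the $\bbZ^2$-action forces this diffeomorphism to have irrational rotation number, and then Herman's theorem \cite[Th\'eor\`eme 1.4]{herman1979conjugaison} gives Lebesgue-ergodicity on $C$ directly. This is exactly the missing bridge. Note also that the paper never uses Theorem~\ref{fullconj} or the conjugacy $h$ here; your detour through the linear model is unnecessary, and the paper phrases the conclusion as \emph{essential accessibility} (every measurable $us$-saturated set is null or conull) and then cites \cite{BW-annals}, which is slightly cleaner than passing through $f$-invariant sets and the Hopf argument.
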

\begin{proof}
    We show that $f$ is essentially accessible, that is, every
    measurable $us$-saturated subset of $\bbT^3$ has either zero measure or
    full measure.
    This is enough to prove ergodicity \cite{BW-annals}.

    Suppose $A \subset \bbT^3$ is such a set and
    $\tilde A$ is its lift to the universal cover.
    As the $\Wu$ and $\Ws$ foliations are absolutely continuous,
    the intersection
    $B := \tilde A \cap \Wc(x_0)$
    has zero or full measure if and only if $\tilde A$ does.
    Note that $\alpha(\tau)(B) = B$ for every $\tau$.
    Take elements $\tau_1, \tau_2 \in \pi_1(\bbT^3)$ such that
    $\langle \tau_1, \tau_2 \rangle$ is isomorphic to $\bbZ^2$.
    Then $C := \Wc(x_0) / \alpha(\tau_1)$ is a $C^2$ manifold homeomorphic to a
    circle, and $\alpha(\tau_2)$ defines a $C^2$ diffeomorphism of this manifold
    with an irrational rotation number.  In such a case, every measurable,
    $\alpha(\tau_2)$-invariant subset of $C$ must have either zero Lebesgue
    measure or full Lebesgue measure
    \cite[Th\'eor\`eme 1.4]{herman1979conjugaison}.
    In particular, $B$ has zero measure or full measure and the claim is
    proved.
\end{proof}
To apply this lemma, it is enough to have a point $x_0$ such that the
manifolds $\Wcs(x_0)$ and $\Wcu(x_0)$ are $C^2$ and
such that the stable/unstable holonomies between center leaves inside these
manifolds are $C^2$.
In fact, if we assume that $\Wcs(x_0)$ and $\Wcu(x_0)$
are $C^3$, the $C^2$ regularity of the holonomies follows.

To see this, first recall the definition of the \emph{norm} $\|A\| =
\sup_{\|v\|=1} \|A v\|$ and \emph{co-norm} $m(A) = \inf_{\|v\|=1} \|A v\|$ of a
linear operator $A:V \to W$ between normed vector spaces.  For a partially
hyperbolic system, take $T^s_x f$ to signify $T_x f|_{\Es(x)}$ and similarly for
the superscripts $c$ and $u$.

\begin{thm}
    [Pugh-Shub-Wilkinson \cite{PSW}] \label{psw}
    Let $f:M \to M$ be a $C^{r+1}$ partially hyperbolic
    diffeomorphism, $r  \ge  1$, such that
    \[
        \|T^s_xf\| \|T^c_xf\|^r < m(T^c_xf)
    \]
    for all $x \in M$.  Then, stable holonomies are $C^r$ smooth on any
    $C^{r+1}$ center-stable leaf.
\end{thm}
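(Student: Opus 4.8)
The plan is to prove this via the $C^r$ section theorem of Hirsch, Pugh, and Shub, realizing the stable holonomy together with its derivatives as the unique invariant section of a fiber-contracting bundle map over $f$. First I would reduce to a local statement on a single $C^{r+1}$ center-stable leaf $L = \Wcs(x)$. Restricted to $L$, the map $f|_L$ is partially hyperbolic with $\Es$ as the strongly contracting direction and $\Ec$ as the center, and the strong stable manifolds $\Ws(y) \subset L$ foliate $L$ with $C^{r+1}$ leaves; what is at issue is purely the transverse regularity of this foliation. Fixing two nearby transversals $\Sigma_1, \Sigma_2$ tangent to $\Ec$ inside a single foliation box, the object of study is the stable holonomy $h_{\Sigma_1,\Sigma_2}:\Sigma_1 \to \Sigma_2$ obtained by sliding along $\Ws$, which a priori is only continuous.

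The key structural fact is that holonomies commute with the dynamics: $f \circ h_{\Sigma_1,\Sigma_2} = h_{f(\Sigma_1),f(\Sigma_2)} \circ f$ on overlapping domains. I would use this first to exhibit $h$ as the unique continuous fixed point of a graph-transform operator, and then, more importantly, to write down the inhomogeneous linear fixed-point equation satisfied by the candidate derivatives $D^k h$. Differentiating the commuting relation $k$ times along the center transversals produces, for each $k \le r$, an invariance equation for the $k$-jet of $h$ in which the $k$-th order term transforms by a factor built from $T^s_x f$ in the numerator (the stable contraction damping the fiber) together with powers of $T^c_x f$ and its co-norm $m(T^c_x f)$ arising from differentiating along, and reparametrizing by, the center direction. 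Assembling the orders $k \le r$ into a single bundle of $r$-jets over $L$, one obtains a bundle endomorphism covering $f$ whose fiber contraction rate at the top order is controlled by $\|T^s_x f\|\,\|T^c_x f\|^r / m(T^c_x f)$.

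At this point the bunching hypothesis $\|T^s_x f\|\,\|T^c_x f\|^r < m(T^c_x f)$ is exactly what is needed: it makes this ratio strictly less than one at every point, so the bundle map is a fiber contraction dominating the base to order $r$. The lower orders $k < r$ are handled inductively, their fiber factors $\|T^s_x f\|\,\|T^c_x f\|^k / m(T^c_x f)$ being even smaller. The $C^r$ section theorem then yields a unique continuous invariant section; by the uniqueness in the scalar fixed-point problem for $h$ itself, its components are precisely the successive derivatives of the geometric holonomy, and continuity of the top-order component gives that $h$ is $C^r$. Working on the $C^{r+1}$ leaf, one order above the target smoothness, is what makes the $r$-jet bundle map continuous, since one always loses a derivative in such graph-transform schemes.

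I expect the main obstacle to be the identification step and the precise constants, not the abstract contraction. Concretely, the hard part is (i) verifying that the invariant section produced by the section theorem genuinely coincides with the jets of the geometric holonomy rather than some spurious invariant object, which reduces to showing $h$ is the unique continuous solution of the graph-transform fixed-point problem, and (ii) carrying out the derivative bookkeeping so that the fiber-contraction factor is exactly $\|T^s_x f\|\,\|T^c_x f\|^r / m(T^c_x f)$ and no worse, since any loss here would force a stronger bunching inequality. As a cross-check one can bypass the abstract theorem and instead write $h = \lim_n f^{-n} \circ h_n \circ f^n$, with $h_n$ an increasingly trivial holonomy deep in the box, and bound $D^k(f^{-n} \circ h_n \circ f^n)$ for $k \le r$ by telescoping the chain rule; the bunching condition makes these bounds uniform in $n$, and an Arzel\`a--Ascoli argument promotes $C^0$ convergence to $C^r$ convergence. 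Either way the bunching inequality enters at exactly the point where $r$ factors of the center derivative must be overpowered by the single stable contraction.
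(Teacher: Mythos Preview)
The paper does not prove this statement; Theorem~\ref{psw} is quoted from Pugh--Shub--Wilkinson \cite{PSW} and used as a black box (specifically, as input to Corollary~\ref{jacobian}). So there is no ``paper's own proof'' to compare your proposal against.

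That said, your sketch is the right one and tracks the argument in \cite{PSW}: encode the holonomy and its candidate $r$-jets as a section of a bundle over the leaf, observe that the dynamics induces a bundle map whose fiber contraction at order $k$ is governed by $\|T^s_x f\|\,\|T^c_x f\|^k / m(T^c_x f)$, and invoke the $C^r$ section theorem once the bunching inequality makes this ratio less than one for all $k \le r$. Your identification of the two delicate points---matching the abstract invariant section with the geometric jets, and not losing constants in the chain-rule bookkeeping---is accurate, as is the remark that working on a $C^{r+1}$ leaf is what makes the jet bundle map continuous. The alternative telescoping/Arzel\`a--Ascoli route you mention is also standard and equivalent. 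Nothing here is wrong; it is simply that the present paper treats the result as an external input rather than something to be reproved.
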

\begin{cor} \label{jacobian}
    Let $f$ be a volume-preserving partially hyperbolic $C^3$ diffeomorphism
    on a three-dimensional manifold.  Then, stable holonomies are $C^2$
    smooth on any $C^3$ center-stable leaf.
\end{cor}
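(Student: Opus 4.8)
The plan is to obtain this as an application of Theorem~\ref{psw} with $r=2$, the only real work being the verification of the bunching inequality $\|T^s_x f\|\,\|T^c_x f\|^2 < m(T^c_x f)$ at every point. Since $\dim M = 3$, the bundles $\Es$, $\Ec$, $\Eu$ are all one-dimensional, so for each of them the norm and co-norm of the restriction of $T_x f$ coincide; write $\lambda_s(x)$, $\lambda_c(x)$, $\lambda_u(x)$ for these values, which are continuous in $x$. Partial hyperbolicity gives $0 < \lambda_s(x) < \lambda_c(x) < \lambda_u(x)$ with $\lambda_u > 1$ everywhere, so by compactness $\lambda_u(x) \ge \mu_0$ for some constant $\mu_0 > 1$; and, dividing the desired inequality by $\lambda_c(x) = m(T^c_x f)$, it is equivalent to show $\lambda_s(x)\,\lambda_c(x) < 1$ for all $x$.

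First I would relate the product $\lambda_s\lambda_c\lambda_u$ to the Jacobian of $f$, which is identically $1$ by volume preservation. As the invariant splitting need not be orthogonal, let $\theta(x)\in(0,1]$ denote the volume of the parallelepiped spanned by unit vectors of $\Es_x,\Ec_x,\Eu_x$. Comparing this parallelepiped with its image under $T_x f$ yields $1 = |\det T_x f| = \lambda_s(x)\lambda_c(x)\lambda_u(x)\,\theta(f(x))/\theta(x)$, hence $\lambda_s(x)\lambda_c(x)\lambda_u(x) = \theta(x)/\theta(f(x))$. Since $\theta$ is continuous and positive on the compact manifold $M$, it is bounded away from $0$ and $\infty$. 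Multiplying this identity along the orbit segment $x, f(x), \dots, f^{n-1}(x)$, the $\theta$-factors telescope, so $\big(\prod_{j<n}\lambda_s(f^j x)\big)\big(\prod_{j<n}\lambda_c(f^j x)\big)\big(\prod_{j<n}\lambda_u(f^j x)\big) = \theta(x)/\theta(f^n(x))$ stays uniformly bounded, whereas $\prod_{j<n}\lambda_u(f^j x) \ge \mu_0^{\,n}$. Hence there is an $N$, independent of $x$, with $\big(\prod_{j<N}\lambda_s(f^j x)\big)\big(\prod_{j<N}\lambda_c(f^j x)\big) < 1$ for all $x$.

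This inequality is precisely $\|T^s_x(f^N)\|\,\|T^c_x(f^N)\|^2 < m(T^c_x(f^N))$ for every $x$, and $f^N$ is again $C^3$. Since $f^N$ has the same invariant bundles $\Es,\Ec,\Eu$ as $f$, it has the same stable foliation, the same stable holonomies, and the same $C^3$ manifolds tangent to $\Ec\oplus\Es$; applying Theorem~\ref{psw} to $f^N$ with $r=2$ then gives that the stable holonomies are $C^2$ on any $C^3$ center-stable leaf, as claimed.

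The one subtle point is the passage to the power $f^N$. The tempting shortcut that volume preservation forces $\lambda_s\lambda_c\lambda_u \equiv 1$ is false because the splitting is not orthogonal, so the ``angle'' function $\theta$ must first be absorbed by telescoping over many iterates before the uniform unstable expansion can be used to close the bunching estimate; once that is done, the rest is routine.
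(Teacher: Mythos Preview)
Your proof is correct and follows essentially the same route as the paper's: the paper writes the Jacobian identity directly for $f^n$ as $1 = \|T^s_x f^n\|\,\|T^c_x f^n\|\,\|T^u_x f^n\|\,V(x)/V(f^n(x))$ with a bounded angle function $V$ (your $\theta$), deduces $\|T^s_x f^n\|\,\|T^c_x f^n\| \le C\,\|T^u_x f^n\|^{-1} < 1$ for large $n$, and then applies Theorem~\ref{psw} to $f^n$ with $r=2$. Your telescoping presentation makes the passage to the iterate slightly more explicit, but the argument is the same.
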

\begin{proof}
    The Jacobian of $f^n$ satisfies
    \[
        1 = J_{f^n}(x) =
        \|T^s_x f^n\| \|T^c_x f^n\| \|T^u_x f^n\|
        \frac{V(x)}{V(f^n(x))}
    \]
    where $V:M \to \bbR$ is a continuous, positive function determined by the
    angles between $\Es$, $\Ec$, and $\Eu$.  Consequently, there is $C > 0$ such
    that
    \[
        \|T^s_x f^n\| \|T^c_x f^n\| \|T^u_x f^n\| < C
    \]
    for all $x$ and $n$.  For $n$ sufficiently large,
    $
        \|T^s_xf^n\| \|T^c_xf^n\|  \le  \|T^u_xf^n\| \inv C < 1
    $
    and since $\Ec$ is one-dimensional, $\|T^c_xf^n\| = m(T^c_xf^n)$ and
    Theorem \ref{psw} is satisfied for $f^n$ and $r = 2$.
\end{proof}
Therefore, to prove ergodicity, we need only find center-stable and
center-unstable leaves which are $C^3$.

A partially hyperbolic $C^r$ diffeomorphism $f:M \to M$ is \emph{$r$-normally
hyperbolic} if it is dynamically coherent, and
\[
    \|T^s_xf\| < m(T^c_xf)^r  \le  \|T^c_xf\|^r < m(T^u_xf)
\]
for all $x \in M$.

If $f$ is $r$-normally hyperbolic, its $cs$- and $cu$-leaves are $C^r$
\cite[Theorem 6.1]{HPS}.

\begin{thm}
    If $f:\bbT^3 \to \bbT^3$ is volume-preserving,
    absolutely partially hyperbolic,
    $3$-normally hyperbolic, and
    homotopic to Anosov,
    then it is ergodic.
\end{thm}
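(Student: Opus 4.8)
The plan is to argue by contradiction: assume $f$ is not ergodic and deduce that it is. First observe that $\dim\Es_f=\dim\Ec_f=\dim\Eu_f=1$ (if $\Ec_f$ were trivial, $f$ would be Anosov, hence ergodic). Since $f$ is $C^3$ (as it is $3$-normally hyperbolic), it is in particular $C^2$, so Theorem \ref{mainthm}, applied to the non-ergodic map $f$ homotopic to the linear Anosov $A$, produces a topological conjugacy $h$ from $f$ to $A$. Moreover, a non-ergodic conservative partially hyperbolic diffeomorphism with one-dimensional center cannot be accessible (Theorem \ref{accerg}), so Proposition \ref{usint} applies and shows that $\Eu_f\oplus\Es_f$ is integrable. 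Thus $f$ satisfies the hypotheses of Theorem \ref{fullconj}, and we are in the precise situation set up at the beginning of Section \ref{three-nh}: the maps $h_\tau$ and the action $\alpha:\pi_1(\bbT^3)\to\Diff^1(\Wc(x_0))$ are well-defined.

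The heart of the argument is then to promote $\alpha$ from a $C^1$ action to a $C^2$ action. Because $f$ is $3$-normally hyperbolic, \cite[Theorem~6.1]{HPS} guarantees that every center-stable and center-unstable leaf of $f$ is $C^3$. Fix an arbitrary $x_0\in\bbT^3$. Applying Corollary \ref{jacobian} to the $C^3$ volume-preserving system $f$ shows that stable holonomies inside the $C^3$ leaf $\Wcs(x_0)$ are $C^2$; applying the same corollary to $f\inv$ --- which is again a $C^3$ volume-preserving partially hyperbolic diffeomorphism of a $3$-manifold, with the roles of $\Eu$ and $\Es$ interchanged, and whose center-stable leaves are the $C^3$ center-unstable leaves of $f$ --- shows that unstable holonomies inside the $C^3$ leaf $\Wcu(x_0)$ are $C^2$. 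Since $h_\tau$ is a composition of such a stable holonomy with such an unstable holonomy, $h_\tau$ is $C^2$, and hence $\alpha(\tau)=\tau\inv\circ h_\tau$ is $C^2$ for every $\tau\in\pi_1(\bbT^3)$.

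With $\alpha(\tau)$ now known to be $C^2$ for all $\tau$, Lemma \ref{ergaction} applies verbatim and shows that $f$ is ergodic, contradicting the standing assumption. Hence $f$ is ergodic.

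I do not anticipate a genuine obstacle: the statement is essentially a repackaging of the tools assembled in Sections \ref{conjsec} and \ref{three-nh}. The two points needing care are (i) verifying that non-ergodicity really does place $f$ within the scope of Theorem \ref{fullconj} --- it forces non-accessibility, hence the conjugacy $h$ via Theorem \ref{mainthm} and the integrability of $\Eu_f\oplus\Es_f$ via Proposition \ref{usint} --- and (ii) the bookkeeping of applying Corollary \ref{jacobian} to both $f$ and $f\inv$ so as to control the stable holonomies inside $\Wcs(x_0)$ and the unstable holonomies inside $\Wcu(x_0)$ simultaneously.
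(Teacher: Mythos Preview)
Your proposal is correct and follows essentially the same route as the paper. The paper leaves the contradiction framing implicit --- Section \ref{three-nh} begins under the standing hypotheses of Theorem \ref{fullconj} (integrability of $\Eu_f\oplus\Es_f$ and the existence of the conjugacy $h$), and the theorem is then read off from the chain ``$3$-normally hyperbolic $\Rightarrow$ $C^3$ leaves (HPS) $\Rightarrow$ $C^2$ holonomies (Corollary \ref{jacobian}) $\Rightarrow$ $\alpha$ is $C^2$ $\Rightarrow$ ergodic (Lemma \ref{ergaction})'' --- whereas you make explicit that non-ergodicity is what places $f$ inside that setup via Theorems \ref{accerg}, \ref{mainthm} and Proposition \ref{usint}; otherwise the arguments coincide, including the use of $f\inv$ to control unstable holonomies.
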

\begin{remark}
    These last two results need the condition of ``volume-preserving'' as
    opposed to just ``conservative'' as defined in Section \ref{definitions}.
    This is because the proof of Corollary \ref{jacobian} uses that the
    Jacobian is equal to one.
\end{remark}
\begin{remark}
    In this section, we assumed \emph{absolute} partial hyperbolicity for
    convenience.  With some work, the above proof can be generalized to
    \emph{point-wise} partially hyperbolic systems which are dynamically
    coherent.  Further, R.~Potrie has announced that any point-wise partially
    hyperbolic system $f:\bbT^3 \to \bbT^3$ homotopic to Anosov is dynamically
    coherent.
\end{remark}
\bigskip

\acknowledgement
The authors are grateful to Federico and Jana Rodriguez Hertz for many helpful
discussions.
R.U.~was partially supported by ANII grant FCE\_2009\_1\_2687 and
A.H.~was partially supported by CNPq (Brazil).


\bibliographystyle{plain}
\bibliography{dynamics}

\begin{thebibliography}{10}

\bibitem{avila2008nonuniform}
A.~Avila, J.~Bochi, and A.~Wilkinson.
\newblock Nonuniform center bunching and the genericity of ergodicity among
  {$C^1$} partially hyperbolic symplectomorphisms.
\newblock {\em Ann. Sci. de l'Ecole Norm. Sup.}, 42:931--979, 2009.

\bibitem{BBI2}
M.~Brin, D.~Burago, and S.~Ivanov.
\newblock Dynamical coherence of partially hyperbolic diffeomorphisms of the
  3-torus.
\newblock {\em Journal of Modern Dynamics}, 3(1):1--11, 2009.

\bibitem{bdp}
K.~Burns, D.~Dolgopyat, and Y.~Pesin.
\newblock Partial hyperbolicity, {Lyapunov} exponents and stable ergodicity.
\newblock {\em Journal of statistical physics}, 108(5):927--942, 2002.

\bibitem{BW-annals}
K.~Burns and A.~Wilkinson.
\newblock On the ergodicity of partially hyperbolic systems.
\newblock {\em Annals of Math.}, 171(1):451--489, 2010.

\bibitem{Franks1}
J.~Franks.
\newblock Anosov diffeomorphisms.
\newblock {\em Global Analysis: Proceedings of the Symposia in Pure
  Mathematics}, 14:61--93, 1970.

\bibitem{grayson1994stably}
M.~Grayson, C.~Pugh, and M.~Shub.
\newblock Stably ergodic diffeomorphisms.
\newblock {\em Annals of Math.}, 140(2):295--330, 1994.

\bibitem{ham-thesis}
A.~Hammerlindl.
\newblock {\em {Leaf conjugacies on the torus}}.
\newblock PhD thesis, University of Toronto, 2009.
\newblock To appear in \emph{Ergod. Th. and Dyn. Sys.}

\bibitem{hector-hirsch-partb}
G.~Hector and U.~Hirsch.
\newblock {\em Introduction to the Geometry of Foliations. Part B. Foliations
  of codimension one. Second edition. Aspects of Mathematics, E3.}
\newblock Friedr.~Vieweg and Sohn, Braunschweig., 1987.

\bibitem{herman1979conjugaison}
M.R. Herman.
\newblock Sur la conjugaison diff{\'e}rentiable des diff{\'e}omorphismes du
  cercle {\`a} des rotations.
\newblock {\em Publications Mathematiques de l'IHES}, (49):5--233, 1979.

\bibitem{RHRHU-accessibility}
F.~Rodriguez Hertz, M.~A.~Rodriguez Hertz, and R.~Ures.
\newblock Accessibility and stable ergodicity for partially hyperbolic
  diffeomorphisms with 1{D}-center bundle.
\newblock {\em Invent.~Math.}, 172(2):353--381, 2008.

\bibitem{RHRHU-nil}
F.~Rodriguez Hertz, M.~A.~Rodriguez Hertz, and R.~Ures.
\newblock Partial hyperbolicity and ergodicity in dimension three.
\newblock {\em Journal of Modern Dynamics}, 2(2):187--208, 2008.

\bibitem{RHRHU-nondyn}
F.~Rodriguez Hertz, M.~A.~Rodriguez Hertz, and R.~Ures.
\newblock A non-dynamically coherent example on $\mathbb{T}^3$.
\newblock 2012.
\newblock preprint.

\bibitem{RHRHTU-duke}
F.~Rodriguez Hertz, M.~A.~Rodriguez Hertz~A. Tahzibi, and R.~Ures.
\newblock New criteria for ergodicity and non-uniform hyperbolicity.
\newblock {\em Duke Math. J.}, 160(3):599--629, 2011.

\bibitem{RHRHU-tori}
F.R. Hertz, M.A. Hertz, and R.~Ures.
\newblock Tori with hyperbolic dynamics in 3-manifolds.
\newblock {\em Journal of Modern Dynamics}, 5(1):185--202, 2011.

\bibitem{HPS}
M.~Hirsch, C.~Pugh, and M.~Shub.
\newblock {\em Invariant Manifolds}, volume 583 of {\em Lecture Notes in
  Mathematics}.
\newblock Springer-Verlag, 1977.

\bibitem{plante1975foliations}
J.F. Plante.
\newblock Foliations with measure preserving holonomy.
\newblock {\em Annals of Math.}, 102(2):327--361, 1975.

\bibitem{pughshub_montevideo}
C.~Pugh and M.~Shub.
\newblock Stable ergodicity and partial hyperbolicity.
\newblock In F.~et~al Ledrappier, editor, {\em 1st International Conference on
  Dynamical Systems, Montevideo, Uruguay, 1995 - a tribute to Ricardo
  Ma\~n\'e}, volume 362 of {\em Pitman Res. Notes Math. Ser.}, pages 182--187.
  Harlow: Longman, 1996.

\bibitem{pugh2000stable}
C.~Pugh and M.~Shub.
\newblock Stable ergodicity and julienne quasi-conformality.
\newblock {\em Journal of the European Mathematical Society}, 2(1):1--52, 2000.

\bibitem{PSW}
C.~Pugh, M.~Shub, and A.~Wilkinson.
\newblock H\"older foliations.
\newblock {\em Duke Math.~J.}, 86(3):517--546, 1997.

\bibitem{repovs1996}
D.~Repov{\v{s}}, A.B. Skopenkov, and E.V. {\v{S}}cepin.
\newblock {$C^1$}-homogeneous compacta in $\mathbb{R}^n$ are
  {$C^1$}-submanifolds of $\mathbb{R}^n$.
\newblock {\em Proc. Amer. Math. Soc}, 124(4):1219--1226, 1996.

\bibitem{solodov1984components}
VV~Solodov.
\newblock Components of topological foliations.
\newblock {\em Mathematics of the USSR-Sbornik}, 47:329, 1984.

\bibitem{vieitez2002lyapunov}
J.L. Vieitez.
\newblock Lyapunov functions and expansive diffeomorphisms on 3d-manifolds.
\newblock {\em Ergodic Theory and Dynamical Systems}, 22(2):601--632, 2002.

\bibitem{wilkinson2010conservative}
A.~Wilkinson.
\newblock Conservative partially hyperbolic dynamics.
\newblock In {\em Proceedings of the International Congress of Mathematicians},
  volume 901, pages 1816--1836, 2010.

\end{thebibliography}

\medskip

\noindent {\sc The University of Sydney and the University of New South Wales}
\\
{\em Mailing address:}\\
\indent Department of Mathematics, University of Sydney, NSW 2006, Australia.\\
{\em E-mail address:} {\tt andy@maths.usyd.edu.au}

\medskip

\noindent {\sc Universidad de la Rep\'ublica}
\\
{\em Mailing address:}\\
\indent IMERL, Facultad de Ingenier\'ia, CC 30, Montevideo, Uruguay.\\
{\em E-mail address:} {\tt ures@fing.edu.uy}

\end{document}